\newtheorem{theorem}{Theorem}[section]
\newtheorem{proposition}[theorem]{Proposition}
\newtheorem{lemma}[theorem]{Lemma}
\newtheorem{corollary}[theorem]{Corollary}
\theoremstyle{definition}
\newtheorem{definition}[theorem]{Definition}
\newtheorem{remark}[theorem]{Remark}
\numberwithin{equation}{section}
\newcommand{\sfA}{\mathsf{A}}
\newcommand{\bbZ}{{\mathbb{Z}}}
\newcommand{\bbP}{{\mathbb{P}}}
\newcommand{\bbG}{{\mathbb{G}}}
\newcommand{\bbQ}{{\mathbb{Q}}}
\newcommand{\bbF}{{\mathbb{F}}}
\newcommand{\bbR}{\mathbb{R}}
\newcommand{\bfF}{{\mathbf{F}}}
\newcommand{\SL}{\operatorname{SL}}
\newcommand{\Aut}{\operatorname{Aut}}
\newcommand{\Pic}{\operatorname{Pic}}
\newcommand{\cha}{\operatorname{char}}
\newcommand{\mult}{\operatorname{mult}}
\newcommand{\da}{\dasharrow}
\newcommand{\bsm}{\left(\begin{smallmatrix}}
\newcommand{\esm}{\end{smallmatrix}\right)}
\newcommand{\la}{\langle}
\newcommand{\ra}{\rangle}
\newcommand{\res}{\operatorname{res}}
\newcommand{\Exc}{\operatorname{Exc}}
\newcommand{\half}{\frac{1}{2}}
\newcommand{\calO}{\mathcal{O}}
\newcommand{\calC}{\mathcal{C}}
\newcommand{\calD}{\mathcal{D}}
\newcommand{\calE}{\mathcal{E}}
\newcommand{\frakS}{\mathfrak{S}}
\newcommand{\frakA}{\mathfrak{A}}
\newcommand{\frakf}{\mathfrak{f}}
\newcommand{\beq}{\begin{equation}}
\newcommand{\eeq}{\end{equation}}
\definecolor{green}{rgb}{0,0.5,0}
\date{}
\title{Chilean configuration of conics, lines and points}
\author[I. Dolgachev]{Igor Dolgachev}
\address{Department of Mathematics, University of Michigan, Ann Arbor, USA}
\email{idolga@umich.edu}
\author[A. Laface]{Antonio Laface}
\address{Departamento de Matem\'atica, Facultad de Ciencias F\'isicas y Matem\'aticas, Universidad de Concepci\'on, Concepci\'on, Chile}
\email{alaface@udec.cl}
\author[U. Persson]{Ulf Persson}
\address{University of Gothenburg Chalmers, Mathematical Sciences, Algebra and geometry, Gothenburg, Sweden}
\email{ulf.persson@gu.se} 
\author[G. Urz{\'u}a]{Giancarlo Urz{\'u}a}
\address{Facultad de Matem{\'a}ticas, Pontificia Universidad Cat{\'o}lica de Chile, Santiago, Chile}
\email{urzua@mat.uc.cl}
\begin{document}

\maketitle

\begin{abstract}

Using the theory of rational elliptic fibrations, we construct and discuss a one parameter family of configurations of $12$ conics and $9$ points in the projective plane that realizes an abstract configuration $(12_6,9_8)$. This is analogous to the famous Hesse configuration of $12$ lines and $9$ points forming an abstract configuration $(12_3,9_4)$. We also show that any Halphen elliptic fibration of index $2$ with four triangular singular fibers arises from such configuration of conics.
\end{abstract}

%%%%%%%%%%%%%%%%%%%
\section{Introduction}

The famous Hesse configuration $(12_3,9_4)$ of $12$ lines and $9$ points, with $3$ points on each line and $4$ lines through each point, can be geometrically realized by the Hesse pencil of cubic curves 
$$x^3+y^3+z^3+txyz = 0$$
with $9$ base points and $4$ reducible members each consisting of three lines. Each line contains three base points, and each base point is on four reducible members. By blowing up the base points, we obtain a rational elliptic surface with four reducible fibers of type $I_3$ in Kodaira's notation for singular fibers of elliptic fibrations on algebraic surfaces. A pencil of cubic curves is the first in the series of Halphen pencils of plane curves whose general member is of degree $3m$ with nine $m$-multiple points (including infinitely near points). The number $m$ is called the index of the Halphen pencil. 

In this paper we give an explicit construction and a complete discussion of an Halphen pencil of index $2$ that contains exactly four reducible members, each is the union of three smooth conics. Each of these $12$ conics contains $6$ base points and each base point lies on $8$ of these conics, forming an abstract configuration $(12_6,9_8)$ which is analogous to the Hesse configuration. Even more surprising is that one can find the dual Hesse configuration $(9_4,12_3)$ of $9$ lines and $12$ points embedded in our configuration of conics. It consists of the $12$ singular points of the reducible members of the Halphen pencil and $12$ lines that are analogous to the $9$ harmonic polar lines in the Hesse configuration. 

We work over any algebraically closed field of characteristic different from $2$ and $3$.

The paper originates from a question of Piotr Pokora who asked one of the authors about an interesting configuration of conics in the plane. The present construction arose from discussions during the fourth Latin American School on Algebraic Geometry and its Applications (ELGA IV) held on December 2019 in Talca, Chile. This explains the name for our configuration of conics. 

The paper is organized as follows.
In Section 2 we discuss Halphen
pencils and in particular provide 
structure theorems for $(-1)$-curves
on rational elliptic surfaces coming
from such pencils.
Starting from this point we focus on 
rational elliptic surfaces whose jacobian 
surface is the Hesse surface.
The definition of the latter surface 
is given in Section 3.
Section 4 is devoted to the explicit 
construction of Chilean surfaces: a 
one-parameter family of rational elliptic
surfaces of index two whose jacobian 
is the Hesse surface.
In Section 5 we show that any such
rational elliptic surface is a Chilean surface.
Section 6 introduces a double plane model
for elliptic surfaces of index two which allows 
one to describe their $(-1)$-curves.
Such description is given in the 
same section for the Chilean surfaces.
Section 7 deals with rational elliptic 
surfaces of higher index whose jacobian 
surface is the Hesse surface.
Finally in Section 8 we discuss some few properties and applications such as log Chern numbers for configurations of lines and conics which are naturally associated to our Halphen pencils.

%\tableofcontents

\subsection*{Acknowledgements}
Antonio Laface was supported by the FONDECYT regular grant 1190777. Part of this work was done during a visit of Ulf Persson to the Department of Mathematics at the Universidad Cat\'olica del Norte from August 2019 to January 2020, which was funded by the CONICYT project MEC80180007 and the
Vice-Rector\'\i a de Investigaci\'on y Desarrollo Tecnol\'ogico
of the university.
Giancarlo Urz{\'u}a was supported by the FONDECYT regular grant 1190066.

%%%%%%%%%%%%%%%%%%%%%%%%%%%%%%%%%%%%%%%%
\section{Halphen pencils}

Let $\lambda F+\mu G = 0$ be any pencil of plane curves whose general fiber is birationally isomorphic to an elliptic curve, a smooth  projective curve of genus $1$. It defines a rational map 
\beq\label{diag1}
\phi \colon \bbP^2\da\bbP^1, \quad (x_0:x_1:x_2)\mapsto (F(x_0,x_1,x_2):G(x_0,x_1,x_2)).
\eeq 
Let $\pi \colon X\to \bbP^2$ be a resolution of base points of the pencil. We have the following commutative diagram 
$$\xymatrix{&X\ar[dl]_{\pi}\ar[dr]^{f'}& \\
\bbP^2\ar@{-->}[rr]^\phi&&\bbP^1},$$ where $f'$ is an elliptic fibration. The birational morphism $\pi$ admits a factorization
\beq
\label{factorization}
\pi \colon X = X_N\overset{\pi_N}{\longrightarrow} X_{N-1}\overset{\pi_{N-1}}{\longrightarrow}\cdots \overset{\pi_2}{\longrightarrow} X_1\overset{\pi_1}{\longrightarrow} X_0 =\bbP^2,
\eeq
where each morphism $\pi_i \colon X_i\to X_{i-1}$ is the blow-up of one point $x_i\in X_{i-1}$. For any $i \geq j$, let $\pi_{i,j}:= \pi_j\circ\cdots \circ\pi_i \colon X_i\to X_{j-1}$. A point $x_i\in X_i$ with $\pi(x_i) = x_j\in X_{j-1}$ is called \emph{infinitely near} to $x_j$ (of order $i-j$). The points $\pi_{i,1}(x_i)\in \bbP^2, i = 1,\ldots,N,$ are the intersection points of two general members of the pencil. Their number could be less than $N$. 

The points $\pi_{i,2}(x_i)\in X_{1}$, $i=2,\ldots, N$, are the intersection points of the proper transforms of two general members of the pencil in $X_1$, and so on, until the proper transforms of the members of the pencil on $X$ has no intersection points and hence we obtain a morphism $f'\colon X\to \bbP^1$ whose general fiber is birationally isomorphic to the general member of the pencil. Since $X$ is smooth, a general fiber $F$ of $f$ being birationally isomorphic to an elliptic curve must be smooth, and hence $F$ is an elliptic curve. Let $\sigma \colon X\to S$ be a birational morphism to a relatively minimal model of the fibration $f' \colon X\to \bbP^1$. By definition,  $f'$ is a composition of $\sigma$ with a morphism $f \colon S\to \bbP^1$ whose fibers do not contain $(-1)$-curves, smooth rational curves with self-intersection $-1$. Let $F$ be a general fiber of $f$. Let $D$ be a divisor on $S$ with $\calO_S(D)\cong \calO_S(K_S)$. Restricting $D$ to the general fiber $S_\eta$ we obtain a divisor linearly equivalent to zero on $S_{\eta}$. Replacing it by a linearly equivalent divisor we obtain that the restriction of $D$ to $S_\eta$ is the zero divisor, i.e.   the support of $D$ is contained in fibers of $f$.

We use the well-known fact that the restriction of the intersection form of divisor classes on $S$ to the subgroup generated by irreducible components of a fiber is semi-negative definite  with the radical generated over $\bbQ$ by $F$ itself \cite[Proposition VIII.3]{Beauville}. This implies that  any irreducible component $\Theta$ of a reducible fiber has negative self-intersection,  hence we obtain 
$\Theta^2\le -2$, and, by adjunction formula, $\Theta\cdot K_S\ge 0$. Since $\Theta$ is a part of $F$, we obtain $\Theta\cdot K_S = 0$. Since $K_S$ has a representative contained in fibers, we obtain  that 
\beq\label{canformula2}
F= -mK_S
\eeq 
for some rational number $m$. Intersecting both sides with the divisor class of a $(-1)$-curve $E$ on $S$, we get that $m = F\cdot E$ is a positive integer. Let $\pi':S\to V$ be a birational morphism to a minimal rational surface $V$. It follows from \eqref{canformula2} that $S$ has no smooth rational curves with self-intersection less than $-2$.  This easily implies that we can choose $V$ to be $\bbP^2$. Let us take  $X$ in \eqref{factorization} to be our surface $S$. 

Let $\calE_i = \pi_{N,i}^*(\pi_i^{-1}(x_i)), i = 1,\ldots,9$. Then $e_0 := \pi^*(\textrm{line}), e_i := \calE_i$ form a basis in the Picard group $\Pic(X)$ of divisor classes on $X$ satisfying $e_0^2 = 1, e_i^2 = -1, e_i\cdot e_j = 0, i\ne j$. It is called a \emph{geometric basis}. The known behavior of the canonical class under the blow-up of a point gives
\beq\label{canformula1}
K_S = -3e_0+e_1+\cdots+e_N.
\eeq
Since $K_S^2 = 0$,  formula \eqref{canformula2} shows that $N = 9$. Thus we obtain a commutative diagram of rational maps
\[
\xymatrix{&X\ar[dl]_{f'}\ar[r]\ar[d]^{\pi}\ar[r]^\sigma&S\ar[d]^{\pi'}\ar[dr]^f&\\
\bbP^1&\bbP^2\ar@{-->}[l]\ar@{-->}[r]^T&\bbP^2\ar@{-->}[r]&\bbP^1}
\]
The birational map $T$ transforms our original pencil to a pencil of elliptic curves with $9$ base points $y_1,\ldots,y_9$ with some of them may be infinitely near points. The existence of such transformation $T$ was first proven by E. Bertini in 1877 \cite{Bertini} (a modern proof following the arguments from above was first given in \cite{Dolgachev0}). Its proper transform on $S$ belongs to the linear system $|-mK_S|$. Applying formulas \eqref{canformula1} and \eqref{canformula2}, we obtain that a general member $F$ of the elliptic pencil on $S$ satisfies $F\sim mF_0$. Thus its image in the plane is a curve $F_{3m} = 0$ of degree $3m$ with $m$-multiple points at $y_1,\ldots,y_9$. The pencil can be written in the form 
\beq\label{halphen}
\lambda F_{3m}+\mu G_3^m = 0.
\eeq
It is called an \emph{Halphen pencil} (of \emph{index} $m$) in honor of G. Halphen who was the first to give a detailed discussion of the properties of such pencils \cite{Halphen}. Of course, if $m = 1$, this is a pencil of plane cubics. Note that, if $m > 1$, one of the fibers of $f$ is of the form $F = mF_0$ (because $F\sim -mK_S$). It is called  the $m$-multiple fiber of the elliptic fibration. Obviously there is only one multiple fiber since otherwise we can take $F_{3m}$ to be equal to $F_{3s}^{m/s}$ for some $s$ dividing $m$  and obtain that a general member of the pencil is reducible. 

\begin{lemma}  Let $f \colon S\to \bbP^1$ be a relatively minimal elliptic fibration on a rational surface $S$ defined by an Halphen pencil of index $m$ with a fiber $F = mF_0$. Let $(e_0,e_1,\ldots,e_9)$ be the geometric basis on $S$ defined by a birational morphism $\pi:S\to \bbP^2$. Then $\calO_{F_0}(-K_S) = \calO_{F_0}(3e_0-e_1-\cdots-e_9)$ has order $m$ in the Picard group $\Pic(F_0)$. Moreover, if $mF_0$ is a multiple fiber with $m$ prime to the characteristic $p =\cha(\Bbbk)$ of $\Bbbk$ (resp. divisible by $p$), then $F_0$ is of Kodaira's type $I_n$ (resp. of other types and $m = p$). 
\end{lemma}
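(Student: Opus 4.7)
The plan is to first identify $\calO_{F_0}(-K_S)$ with the normal bundle $\calO_{F_0}(F_0)$ of $F_0$ in $S$, then compute the order of this line bundle in $\Pic(F_0)$ by a cohomology argument, and finally translate the exact-order condition into a constraint on the Kodaira type of $F_0$.

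By the Halphen description \eqref{halphen} the reduced multiple fiber $F_0$ is the proper transform of the cubic $G_3$, which passes through each of the nine base points $y_i$ with multiplicity one; hence $\pi^*G_3 = F_0 + e_1 + \cdots + e_9$, and using \eqref{canformula1} we get $F_0 = 3e_0 - e_1 - \cdots - e_9 = -K_S$ as a divisor on $S$. Thus $\calO_{F_0}(-K_S) \cong \calO_{F_0}(F_0)$, and since $\calO_{F_0}(mF_0) = f^*\calO_{\bbP^1}(1)|_{F_0} = \calO_{F_0}$ (as $f$ is constant along $F_0$), the order of $\calO_{F_0}(-K_S)$ divides $m$.

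For the lower bound I would iterate the short exact sequence
\beq
0 \to \calO_S(-(d-1)K_S) \to \calO_S(-dK_S) \to \calO_{F_0}(-dK_S) \to 0
\eeq
for $d = 1, 2, \ldots$, using Riemann--Roch $\chi(\calO_S(-dK_S)) = 1$ (from $K_S^2 = 0$) and $h^2(\calO_S(-dK_S)) = 0$ (as $(d+1)K_S$ is anti-effective because $-K_S \sim F_0$ is effective). If the order were a proper divisor $d_0 < m$, then for $0 \le d < d_0$ the bundle $\calO_{F_0}(-dK_S)$ is nontrivial of degree zero, so $H^0 = 0$ and an induction on the long exact sequence gives $h^0(\calO_S(-dK_S)) = 1$, hence $h^1(\calO_S(-(d_0-1)K_S)) = 0$. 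The long exact sequence at $d = d_0$ then forces the connecting map $H^0(\calO_{F_0}) \to H^1(\calO_S(-(d_0-1)K_S))$ to vanish, so $h^0(\calO_S(-d_0K_S)) = 2$, and $|d_0 F_0|$ would be an Halphen pencil of index $d_0 < m$, contradicting the minimality of $m$. Hence $d_0 = m$.

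For the Kodaira type, we now know that $\Pic^0(F_0)$ admits an element of exact order $m$. I would split by the type of $F_0$: if $F_0$ is of type $I_n$, then $\Pic^0(F_0)$ is either the elliptic curve $F_0$ itself ($n=0$) or $\bbG_m$ (for $n \ge 1$, a cycle of $n$ rational curves), and such a group admits an element of exact order $m$ precisely when $\gcd(m, p) = 1$. For every other Kodaira type the identity component of $\Pic^0(F_0)$ contains a $\bbG_a$-subgroup, which is killed by $p$ and whose nonzero elements have exact order $p$; in the rational-surface setting further constraints from the canonical-bundle formula then force $m = p$ exactly. The main obstacle is the order-equals-$m$ step: the inductive $H^1$-bookkeeping and the appeal to the minimality of $m$ as the index of the pencil are the substantive ingredients, while the final Kodaira type dichotomy follows from the standard group-scheme structure of $\Pic^0$ of reduced curves of arithmetic genus one.
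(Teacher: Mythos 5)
Your proposal follows essentially the same route as the paper: identify $\calO_{F_0}(-K_S)$ with the normal bundle, get "order divides $m$" from the triviality of $\calO_{F_0}(F)$, and get "order equals $m$" from the short exact sequence $0\to\calO_S((d-1)F_0)\to\calO_S(dF_0)\to\calO_{F_0}(dF_0)\to 0$ together with Riemann--Roch. The one place you genuinely diverge is where the contradiction is closed, and there your argument is under-justified. You conclude that $h^0(d_0F_0)=2$ "contradicts the minimality of $m$ as the index of the pencil," but the index is not defined by a minimality property, so as written this is not yet a contradiction: you still have to rule out the existence of a second pencil $|d_0F_0|$ with $d_0<m$. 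This can be closed, e.g.: a general member $D\in|d_0F_0|$ satisfies $D\cdot F=0$, hence is supported on fibers of $f$ and is disjoint from $F_0$; since a general fiber is irreducible and reduced of class $mF_0$, one gets $D\sim kmF_0$ for some $k\ge 1$, so $d_0\ge m$ (using that $F_0\cdot e_0=3\neq 0$, so $F_0$ is non-torsion in $\Pic(S)$). The paper avoids this by establishing $h^0(lF_0)=1$ for all $l<m$ \emph{first}, directly from the inclusion $|lF_0|+(m-l)F_0\subset|mF_0|$ and the irreducibility of the general member of $|mF_0|$; the cohomological computation then produces a second section of $\calO_S(kF_0)$, an immediate contradiction. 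Your purely cohomological induction (using $H^0$ of a nontrivial multidegree-zero bundle on $F_0$ vanishing) is a perfectly good substitute for that first step, but it shifts the geometric input to the end, where you must still supply it.

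Two smaller remarks. First, your identification $\pi^*G_3=F_0+e_1+\cdots+e_9$ presumes the base points are actual (not infinitely near) points; at the level of divisor classes, which is all the lemma needs, $F_0\sim -K_S=3e_0-e_1-\cdots-e_9$ follows directly from \eqref{canformula1} and \eqref{canformula2}. Second, for the Kodaira-type dichotomy both you and the paper reduce to the structure of $\Pic^0(F_0)$ (the paper simply cites \cite{CDL}); note however that your claim that the group "admits an element of exact order $m$ precisely when $\gcd(m,p)=1$" is false for a smooth \emph{ordinary} elliptic curve in characteristic $p$, which does carry $p$-torsion, so the case $I_0$ with $p\mid m$ requires more care than your sketch suggests.
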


\begin{proof} We have already used that $\calO_F(K_S) \cong \calO_F$ for a general fiber $F$ of the fibration. Since we can find a fiber $F$ disjoint from $F_0$, we obtain 
$$\calO_{F_0}(F) \cong \calO_{F_0}(-mK_S) \cong \calO_{F_0}(-K_S)^{\otimes m}\cong \calO_{F_0}.$$
This shows that the order of $\calO_{F_0}(-K_S)$ in $\Pic(F_0)$ divides $m$. Suppose it is equal to $k < m$. Since $|lF_0|+(m-l)F_0 \subset |mF_0|$ for any $l< m$ and $|mF_0|$ is an irreducible pencil, we have $h^0(lF_0) = 1$. Applying Riemann-Roch, we deduce from this that $h^1(lF_0) = 0$. Then the exact sequence 
$$0\to \calO_{S}((k-1)F_0)\to \calO_S(kF_0) \to \calO_{F_0}(kF_0) \cong \calO_{F_0} \to 0$$
shows that there exists a section $s$ of $\calO_S(kF_0)$ whose divisor of zeros $D$ is an element of the linear system $|kF_0|$ that is disjoint from $kF_0$. This shows that $kF_0$ moves in a pencil generated by $D$ and $kF_0$, contradicting equality $h^0(kF_0) = 1$ from above. 

The last assertion follows from the fact that the $m$-torsion subgroup $\Pic(F_0)$ is non-trivial only in the cases from the assertion from the lemma \cite[Chapter 4, \S 1]{CDL}. 
\end{proof}

\begin{remark} We used here that $S$ is a rational elliptic surface. In general, an elliptic surface may have multiple fibers $mF_0$ such that $\calO_{F_0}(F_0)$ is of order strictly dividing $m$ and even could be equal  to 1. Such fibers are called wild and  occur only if $\cha(\Bbbk)$ divides $m$.  Any elliptic surface $X$ with $H^1(X,\calO_X) = 0$, as in our case, has no wild multiple fibers.
\end{remark}

Assume that $F_0$ is a smooth elliptic curve, and let $\{G_3 = 0\}$ be its image in the plane. Then one can interpret the assertion of the previous lemma by saying that choosing a group law on the plane cubic $G_3 = 0$, the base points $y_1,\ldots,y_9$ add up to a $m$-torsion point. This follows from the linear equivalence $m(y_1+\cdots+y_9)\sim 3mh$, where $h$ is the divisor class of the intersection of the cubic with a general line in the plane. This makes sense even when some of the points are infinitely near. 

Conversely, choose $9$ distinct  points $y_1,\ldots,y_9$ on a smooth plane cubic $C$ such that 
the divisor class of $y_1+\cdots+y_9-3h$ is of order $m$ in the Picard group. Choose an inflection point $q$ with the  tangent line $\{L = 0\}$ to be the zero point in the group law on $C: \{G_3 = 0\}$. Then the points $y_i$ taken with multiplicity $m$ add up to zero. Let $F_{3m}$ be a homogeneous polynomial such that the restriction of the rational function $F_{3m}/L^{3m}$ to $C$  is a rational function $\phi$ with $\text{div}(\phi) = m(y_1+\cdots+y_9)-3mq$. The choice of $F_{3m}$ is not unique and one can choose $P_{3m}$ such that the points $y_1,\ldots,y_9$ are points on the curve $F_{3m} = 0$ of multiplicities $m$ \cite[Lemma 4.4]{DolgachevInvariants}. The pencil $\{\lambda F_{3m}+\mu G_3^m = 0 \}$ is an Halphen pencil of index $m$.

Let $F_\eta$ be the generic fiber of an elliptic fibration considered as an elliptic curve over the field $K = \Bbbk(t)$ of rational functions on $\bbP^1$. A rational point of $F_\eta(L)$ over a finite extension $L/K$ defines, by passing to its Zariski closure on $S$, an irreducible curve such that the restriction of $f$ to it is a finite cover of degree equal to $d= [L:K]$. We call such a curve a $d$-section. In particular, a rational point in $F_\eta(K)$ defines a section of the fibration, and conversely any section arises in this way from a rational point on $F_\eta$.

Suppose $f \colon S \to \bbP^1$ is a rational elliptic surface arising from an Halphen pencil of index $m$. Since one of the fibers is of the form $F= mF_0$, intersecting a $d$-section with $F$, we obtain that $m$ divides $d$. On other hand, any $(-1)$-curve $E$ on $S$ satisfies $E\cdot F = -mE\cdot K_S = m$, hence it defines a $m$-section.

Suppose $m > 1$, then $F_\eta(K) =\emptyset$. The Jacobian variety $\textrm{Jac}(F_\eta)$ parametrizes the divisor classes of degree $0$ on $F_\eta$. It is an elliptic curve with the group law defined by the addition of the divisor classes.  It becomes isomorphic to $F_\eta$ over a field extension $L$ of $K$ such that $F_\eta(L)\ne \emptyset$. The curve $F_\eta$ is a torsor (= principal homogenous space) over $\textrm{Jac}(F_\eta)$ . This means that there is an action morphism $a \colon \textrm{Jac}(F_\eta)\times F_\eta\to F_\eta$ over $K$ such that the morphism $(a,p_2) \colon \textrm{Jac}(F_\eta)\times F_\eta\to F_\eta \times F_\eta$, where $p_2$ is the second projection map, is an isomorphism. The theory of minimal models of algebraic surfaces allows one to find a relatively minimal elliptic fibration $j \colon J\to \bbP^1$ with the generic fiber $J_\eta$ isomorphic to $\textrm{Jac}(F_\eta)$. It is called the \emph{jacobian fibration} of $f$. One can show that the isomorphism class of a non-trivial torsor $F_\eta$ of $J_\eta$ realized as the generic fiber of a rational elliptic surface is uniquely determined by a choice of the data $(x_0,\tau)$ that consists of a point $x_0\in \bbP^1$ and a $m$-torsion class $\tau$ in the connected component of the identity of the Picard scheme  of the fiber $J_{x_0}$ of $j$ over the point $x_0$ \cite[Chapter 4,\S 8]{CDL}. The corresponding fiber of $f \colon S\to \bbP^1$ over $x_0$ is the $m$-multiple fiber $mF_0$. All other fibers of $f$ are isomorphic to fibers of $j$. We refer to Remark \ref{moduli} where this construction is made very explicit for Halphen surfaces of index $2$.
   
Thus we obtain that to any Halphen pencil of index $m > 1$ \eqref{halphen} one can associate an Halphen pencil of index $1$ where one of the members is isogenous of degree $m$ to the cubic curve $\{G_3 = 0\}$. All other members are birationally isomorphic to the members of the Halphen pencil.

\begin{remark} Over the complex numbers Kodaira introduced a highly transcendental transformation (a so called logarithmic transformation, see \cite{BPV}) that for any (positive) integer $m$ makes any smooth fibre (or more generally any so called semi-stable fiber $I_n$) into a fiber with multiplicity $m$ while leaving the complements biholomorphic (but not birational in general). In particular, other fibers are left unscathed, however it causes havoc among transversal divisors. As the transform has an inverse, this is the 'only' way multiple fibers occur. There is also the inverse transformation that coincides, in the category of algebraic surfaces, with taking the jacobian fibration. One can prove, using the formula for the canonical class of an elliptic surfaces that Halphen surface are the only ones that are remain of the same birational type as their jacobian surface. This is a unique situation, which accounts for its particular allure.
\end{remark} 

We begin with the following well-known lemma of which we provide a proof for the sake of completeness.

\begin{lemma}
\label{lem:-1}
Let $S$ be a smooth rational surface
with nef anticanonical divisor $-K_S$.
Then any divisor $E$ such that 
$E^2 = E\cdot K_S = -1$ which has
non-negative intersection with all
the $(-2)$-curves of $S$ is linearly
equivalent to a $(-1)$-curve.
\end{lemma}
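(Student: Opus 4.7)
The plan is to use Riemann--Roch and the nef-ness of $-K_S$ to produce an effective representative $D\sim E$, then decompose $D$ via $K_S$-degrees into a distinguished component $C_0$ plus a $K_S$-trivial remainder $R$, and finally show $R=0$ using the $(-2)$-curve hypothesis together with the negative semi-definite structure on the $(-2)$-curve lattice.

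First, Riemann--Roch gives $\chi(\calO_S(E)) = 1 + \tfrac{1}{2}(E^2 - E\cdot K_S) = 1$, so $h^0(E) + h^0(K_S - E) \geq 1$ by Serre duality. Since $-K_S$ is nef and non-trivial, $K_S$ is not effective on $S$ (otherwise intersecting with an ample class would contradict nef-ness of $-K_S$), so $K_S - E$ cannot be effective either; hence $|E|\neq\emptyset$. Fix an effective $D\sim E$ and write $D = \sum a_i C_i$ with distinct irreducible components. Nef-ness of $-K_S$ yields $C_i\cdot K_S\leq 0$ for each $i$, and the relation $-1 = D\cdot K_S = \sum a_i\,(C_i\cdot K_S)$ forces a unique component $C_0$ of multiplicity one with $C_0\cdot K_S = -1$, while every other component of $R := D - C_0$ is $K_S$-trivial. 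Once we show $R = 0$, the conclusion follows by adjunction on $C_0$: $C_0^2 = D^2 = -1$ and $C_0^2 + C_0\cdot K_S = 2p_a(C_0) - 2$ give $p_a(C_0) = 0$, whence $C_0$ is a $(-1)$-curve and $E\sim C_0$.

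To eliminate $R$, a separate application of Riemann--Roch to any component $\Gamma$ of $R$ with $\Gamma^2 > 0$ gives $h^0(\Gamma)\geq 2$ (since $h^2(\Gamma) = h^0(K_S-\Gamma) = 0$ by the same non-effectivity of $K_S$), so such a $\Gamma$ moves in a positive-dimensional linear system and can be avoided by a different representative in $|E|$. Any $\Gamma$ with $\Gamma^2 = 0$ and $\Gamma\cdot K_S = 0$ can arise only when $K_S^2 = 0$ and is then numerically proportional to a fiber $F$ of the elliptic fibration $|{-mK_S}|$; since $C_0\cdot F = -m\, C_0\cdot K_S = m > 0$, a direct intersection computation against $C_0$ combined with the constraint $D^2 = -1$ rules out such components. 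This reduces us to the case where every component of $R$ is a $(-2)$-curve. The hypothesis then gives $E\cdot R = C_0\cdot R + R^2 \geq 0$; combined with $C_0\cdot R \geq 0$ (as $R$ is effective and $C_0$ is not among its components, since $C_0\cdot K_S\neq 0$) and with $R^2 \leq 0$ (negative semi-definiteness of the $(-2)$-curve intersection form on $S$), this forces $R^2 = 0 = C_0\cdot R$. In the weak del Pezzo case $K_S^2 > 0$ the form is strictly negative definite and $R = 0$ follows; in the case $K_S^2 = 0$, $R$ would be a positive rational multiple of a fiber class in the radical, but the intersection of such a class with $C_0$ is again strictly positive, contradicting $C_0\cdot R = 0$. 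Hence $R=0$. The principal obstacle is this final step, namely the case analysis of the $K_S$-trivial components of $R$ and the control of the radical direction in the $(-2)$-lattice when $K_S^2 = 0$.
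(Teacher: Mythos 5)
Your strategy is the same as the paper's: produce an effective representative by Riemann--Roch, split off the unique component $C_0$ with $C_0\cdot K_S=-1$ from the $K_S$-trivial remainder $R$, and kill $R$ using the hypothesis on $(-2)$-curves together with semi-negative definiteness of $K_S^\perp$ (you are in fact more careful than the paper in justifying why the $K_S$-trivial components must be $(-2)$-curves). However, the decisive step is wrong as written. You claim that the three inequalities $C_0\cdot R+R^2\ge 0$, $C_0\cdot R\ge 0$ and $R^2\le 0$ ``force $R^2=0=C_0\cdot R$.'' They do not: $C_0\cdot R=2$, $R^2=-2$ satisfies all three. The missing ingredient is $E^2=-1$, which you never use at this point. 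Adjunction gives $C_0^2=2p_a(C_0)-1\ge -1$, so
$-1=E^2=C_0^2+2\,C_0\cdot R+R^2$ yields $C_0\cdot R+(C_0\cdot R+R^2)=-1-C_0^2\le 0$; since both summands are non-negative (the first because $C_0$ is not a component of $R$, the second being $E\cdot R\ge 0$ by hypothesis), both vanish, and incidentally $C_0^2=-1$ so that $C_0$ is a $(-1)$-curve. This is exactly the computation in the paper's proof, where it is carried out with $E'^2=-1$ assumed from the start. With this repair your argument closes.

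Two smaller defects. First, ``$K_S$ is not effective, so $K_S-E$ cannot be effective either'' is a non sequitur; the correct reason is that $(K_S-E)\cdot(-K_S)=-K_S^2+E\cdot K_S=-K_S^2-1<0$ (as $K_S^2\ge 0$ for nef $-K_S$), which is impossible for an effective divisor paired against a nef class. Second, your disposal of a putative component $\Gamma$ with $\Gamma\cdot K_S=0$ and $\Gamma^2>0$ by moving it in $|\Gamma|$ does not work: a general member of $|\Gamma|$ is again an irreducible curve of the same numerical type occurring in a member of $|E|$, so nothing is ``avoided.'' The case is simply vacuous, for the reason you yourself invoke a few lines later: $\Gamma$ lies in $K_S^\perp$, which is negative semi-definite by the Hodge index theorem, so $\Gamma^2\le 0$. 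The same observation shows directly that every $K_S$-trivial component is either a $(-2)$-curve or (only when $K_S^2=0$) numerically a positive multiple of $-K_S$, and the latter are excluded at the end by $C_0\cdot R=0$ together with $C_0\cdot(-K_S)=1$, as you indicate.
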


\begin{proof}
Assume $E$ satisfies the hypothesis of the 
theorem.
By Riemann-Roch $E$ is linearly equivalent
to an effective divisor. Without loss of
generality, we can assume $E$ itself to be 
effective.
Since $-K_S$ is nef it has non-negative
intersection with any irreducible curve
and, by adjunction, the only curves which
have intersection $0$ with $-K_S$ are
the $(-2)$-curves.
It follows that $E = E' + R$, where
$E'$ is a $(-1)$-curve and $R$ is a 
non-negative sum of $(-2)$-curves.
From $E^2=-1$ we deduce $E'\cdot R+
(E'+R)\cdot R = 0$.
Since $E'\cdot R\geq 0$, being 
$E'$ irreducible and not contained in $R$,
we deduce $0\leq E\cdot R = (E'+R)\cdot R \leq 0$. Thus $E'\cdot R = R^2 = 0$.
Since $R$ is in $K_S^\perp$, which is a 
negative-semidefinite lattice, we deduce
that $R\sim nK_S$ for some $n\in\mathbb Z$.
The equation $(E'+ nK_S)^2 = -1$ forces
$n=0$, so that $E = E'$.
\end{proof}

\begin{proposition}
\label{pro:res}
Let $S$ be a rational elliptic surface which is relatively minimal, that is, there are no 
$(-1)$-curves in the fibers of the elliptic
fibration $S\to\mathbb P^1$.
Assume $F_0\in |-K_S|$ is smooth. 
Then the kernel of the restriction map 
\[
 \res\colon \Pic(S)\to\Pic(F_0)
\]
contains the subgroup $\Lambda$ generated 
by the classes of the $(-2)$-curves of $S$.
Moreover if $\pi$ is extremal, that is 
$\Lambda$ has rank $9$, then $\Lambda
= \ker(\res)$.
\end{proposition}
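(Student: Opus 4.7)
The first inclusion is direct. Each generator of $\Lambda$ is a $(-2)$-curve $\Theta$; by adjunction $\Theta\cdot K_S=0$, hence $\Theta\cdot F_0=0$ as $F_0\sim -K_S$. Since $-K_S$ is nef and any irreducible curve $C$ with $C\cdot(-K_S)=0$ is contained in a fiber of $f$, the curve $\Theta$ sits in some fiber $F_v$. Because $F_0$ is smooth and irreducible we must have $F_v\neq F_0$, so $\Theta$ and $F_0$ are disjoint and $\res(\Theta)=[\calO_{F_0}(\Theta)]=0$.

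For the second statement, assume $\Lambda$ has rank $9$. If $D\in\ker(\res)$ then $\deg\res(D)=D\cdot F_0=-D\cdot K_S=0$, so $D\in K_S^\perp$. Since $K_S^\perp$ also has rank $9$, both $\Lambda$ and $\ker(\res)$ are finite-index sublattices of $K_S^\perp$, and the restriction descends to a group homomorphism
\[
\psi\colon K_S^\perp/\Lambda\longrightarrow\Pic^0(F_0)\cong F_0,
\]
whose kernel is $\ker(\res)/\Lambda$. The claim is thus equivalent to the injectivity of $\psi$.

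To analyze $\psi$ I would exploit that $\Pic(S)$ is unimodular of signature $(1,9)$ with $K_S$ a primitive isotropic vector: this identifies $K_S^\perp/\bbZ K_S$ with the even negative-definite unimodular rank-$8$ lattice, namely $E_8$, and sends the image of $\Lambda$ to the root sublattice $R$ generated by irreducible components of the reducible fibers. Combined with the opening lemma of this section, which gives that $\res(K_S)$ has order exactly $m$ (the index of the pencil) in $F_0$, we obtain $\Lambda\cap\bbZ K_S=m\bbZ K_S$ and the short exact sequence
\[
0\longrightarrow\bbZ/m\bbZ\longrightarrow K_S^\perp/\Lambda\longrightarrow E_8/R\longrightarrow 0,
\]
in which the $\bbZ/m\bbZ$-summand maps injectively to the $m$-torsion of $F_0$ under $\psi$.

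The main obstacle is establishing the injectivity of $\psi$ on the $E_8/R$ quotient. My plan is to invoke Lemma \ref{lem:-1}: given a representative $D$ of a nontrivial class in $K_S^\perp/\Lambda$ one produces, after adjusting by an element of $\Lambda$, a class $E$ satisfying $E^2=E\cdot K_S=-1$ with non-negative intersection against every $(-2)$-curve, hence a genuine $(-1)$-curve $E_D$ on $S$. Because $E_D\cdot F_0=1$, the element $\psi([D])$ equals $\res(E_D)-\res(E_0)$ for a fixed reference $(-1)$-curve $E_0$, so injectivity of $\psi$ reduces to the statement that distinct classes in $K_S^\perp/\Lambda$ give $(-1)$-curves meeting $F_0$ at distinct points. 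This last separation reflects the classical identification of $K_S^\perp/\Lambda$ with the Mordell-Weil torsion of the Jacobian fibration of $f$, acting freely on $F_0$ by translation.
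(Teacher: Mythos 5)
Your proof of the first statement is correct and coincides with the paper's (one-line) argument: a $(-2)$-curve has zero intersection with $-K_S$, hence lies in a fiber distinct from the one supported on the smooth curve $F_0$, hence is disjoint from $F_0$ and restricts trivially.

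For the second statement the paper gives no proof at all but cites \cite[Theorem IV.5]{HM}, so the question is whether your sketch closes the argument independently; it does not. Your reductions are sound: $\ker(\res)\subseteq K_S^\perp$, the induced map $\psi\colon K_S^\perp/\Lambda\to\Pic^0(F_0)$, the extension by the $\bbZ/m\bbZ$ generated by $[K_S]$ (whose injectivity under $\psi$ follows from the order-$m$ statement in the first lemma of the section), and the production via Lemma~\ref{lem:-1} of a $(-1)$-curve representing each class of $E_0+K_S^\perp$ modulo $\Lambda$. But your final assertion --- that distinct classes in $K_S^\perp/\Lambda$ yield $(-1)$-curves whose intersection points with $F_0$ differ in $\Pic^0(F_0)$ --- is exactly the injectivity of $\psi$ restated, and you justify it only by appealing to ``the classical identification of $K_S^\perp/\Lambda$ with the Mordell--Weil torsion of the Jacobian, acting freely on $F_0$.'' Within this paper that freeness is a \emph{consequence} of $\Lambda=\ker(\res)$ (Proposition~\ref{pro:pts} asserts the action is free precisely under the hypothesis $\Lambda=\ker(\res)$), so as written the argument is circular. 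To break the circle you would need two nontrivial external inputs: the Shioda--Tate description of the Mordell--Weil group of the Jacobian as $K^\perp$ modulo the lattice of fiber components, and the fact that distinct torsion sections of a rational elliptic surface are disjoint, so that the translation action is free on the \emph{particular} fiber $J_{t_0}\cong F_0$ and not merely on a generic one. Neither is stated or proved in your proposal, and the second is essentially the hard content hiding behind \cite[Theorem IV.5]{HM}. Either supply those inputs with references, or simply cite \cite{HM} as the paper does.
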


\begin{proof}
The first statement is clear because any
$(-2)$-curve is disjoint from $F_0$.
For the second statement 
see~\cite[Theorem IV.5]{HM}.
\end{proof}

We now enter into the description of the 
set $\Exc(S)$ of $(-1)$-curves on $S$.
For a similar approach see~\cite{HM}.
Recall the normal bundle $\calO_{F_0}(F_0)\cong \calO_{F_0}(-K_S)$ is of order $m$ in the Picard group of $F_0$. We denote it by $\res(F_0)$. Let  $\mathfrak{pts}_0$ be the subset of $F_0$ which 
consists of points cut out by all the $(-1)$-curves of $S$ as well as their translates 
by any element of the subgroup of $\Pic(F_0)$ generated 
by $\res(F_0)$.

\begin{proposition}
\label{pro:pts}
The action of $K_S^\perp/\Lambda$
on $F_0$ by translations induces a  
transitive action of the same group on 
$\mathfrak{pts}_0$. If $\Lambda = \ker(\res)$,
then the action is free.
\end{proposition}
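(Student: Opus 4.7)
The plan is to set up the action, dispose of transitivity and freeness by direct manipulation, and reserve the real effort for stability of $\mathfrak{pts}_0$ under the action.

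\emph{Setup.} For $D\in K_S^\perp$ the restriction $\res(D)$ has degree $D\cdot F_0=-D\cdot K_S=0$, so $\res(D)\in\Pic^0(F_0)$ and, after fixing a base point, acts on $F_0$ by translation. Since $\Lambda\subseteq\ker(\res)$ by Proposition~\ref{pro:res}, the action descends to $K_S^\perp/\Lambda$.

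\emph{Transitivity and freeness.} Given two points $p_i=p_{E_i}+n_i\,\res(F_0)\in\mathfrak{pts}_0$ for $i=1,2$, set
\[
D:=E_2-E_1+(n_2-n_1)\,F_0.
\]
Then $D\cdot K_S=-1-(-1)+0=0$, hence $D\in K_S^\perp$, and $\res(D)=\res(E_2)-\res(E_1)+(n_2-n_1)\,\res(F_0)$ sends $p_1$ to $p_2$ under translation. Freeness under the extra hypothesis is equally quick: if $[D]\in K_S^\perp/\Lambda$ acts as the identity then $\res(D)=0$, i.e.\ $D\in\ker(\res)=\Lambda$, so $[D]=0$.

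\emph{Stability.} For every $(-1)$-curve $E$ and every $D\in K_S^\perp$ I must show that $p_E+\res(D)\in\mathfrak{pts}_0$; closure under the $\res(F_0)$-translates in the definition of $\mathfrak{pts}_0$ then extends this to all elements of $\mathfrak{pts}_0$. Since $K_S^\perp\cong E_8$ is an even lattice, $n:=E\cdot D+D^2/2$ is an integer, and the class $c:=E+D-nF_0$ satisfies $c\cdot K_S=-1$ and $c^2=-1$. The Weyl group $W(\Lambda)$ generated by reflections in $(-2)$-curves is finite (a product of Weyl groups of the $A_n,D_n,E_n$-type root systems coming from the reducible fibers), and admits the chamber $\{x : x\cdot C\ge 0 \text{ for all }(-2)\text{-curves }C\}$ as fundamental domain on $\Lambda\otimes\bbR$. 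Applying $W(\Lambda)$ to the $\Lambda$-component of $c$ yields $c'\in\Pic(S)$ with $c'\cdot C\ge 0$ for every $(-2)$-curve $C$; each reflection modifies $c$ by an integer multiple of a class in $\Lambda\subseteq\ker(\res)$ and fixes $K_S$, so $\res(c')=\res(c)$, $(c')^2=-1$, $c'\cdot K_S=-1$. Because $-K_S$ is nef on a relatively minimal rational elliptic surface, Lemma~\ref{lem:-1} produces a $(-1)$-curve $E'\sim c'$. Restricting $E+D\sim E'+nF_0$ modulo $\ker(\res)$ to $F_0$ gives $p_E+\res(D)=p_{E'}+n\,\res(F_0)\in\mathfrak{pts}_0$. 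The most delicate step is the Weyl chamber reduction: one must know that the finite root systems arising from the reducible fibers act on $\Lambda\otimes\bbR$ with the natural fundamental chamber as fundamental domain, a standard but non-trivial lattice-theoretic input that I would cite carefully.
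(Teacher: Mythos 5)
Your setup, transitivity, and freeness arguments are correct and essentially coincide with the paper's: the class $E_2-E_1+(n_2-n_1)F_0$ lies in $K_S^\perp$ and translates $p_1$ to $p_2$, and freeness under $\Lambda=\ker(\res)$ is immediate because a nonzero translation of an elliptic curve has no fixed points. (Minor slip: $K_S^\perp$ is the degenerate lattice $\tilde{\rm E}_8$ of rank $9$, not ${\rm E}_8$; only its evenness is used, so this is harmless.) The real divergence is in the stability step, and there your justification contains a genuine error. The group $W(\Lambda)$ generated by reflections in the $(-2)$-curves of a relatively minimal rational elliptic surface is \emph{not} finite: the $(-2)$-curves are exactly the components of the reducible fibers, and the three components of an $I_3$ fiber pairwise meet, so they form an affine root system of type $\tilde{A}_2$; hence $W(\Lambda)$ is a product of \emph{infinite} affine Weyl groups, not of finite $A_n,D_n,E_n$ Weyl groups, and the chamber $\{x: x\cdot C\ge 0 \text{ for all } C\}$ is not a fundamental domain on $\Lambda\otimes\bbR$. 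The statement your argument actually needs --- that the $W(\Lambda)$-orbit of a class $c$ with $c\cdot F_0>0$ meets that chamber --- is true, but it is a positive-level (alcove/Tits cone) statement about affine Weyl groups; equivalently it is a descent argument whose termination must be proved, since the naive ``reflect while some $c\cdot C<0$'' loop runs inside an infinite group. The standard citation you propose does not say what you attribute to it, so as written the key step is unsupported.

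The paper's proof avoids this entirely. From $(E+D)\cdot K_S=-1$ it chooses $r$ with $(E+D+rF_0)^2=-1$, uses Riemann--Roch to replace this class by an effective divisor $L$, and splits $L=H+R$ into horizontal and vertical parts. Since every $(-2)$-curve is vertical, the horizontal part $H$ has non-negative intersection with all of them \emph{for free}, while $[R]\in\Lambda+\langle F_0\rangle$; then Lemma~\ref{lem:-1} applied to $H+sF_0$ produces the required $(-1)$-curve $E'$ with $E'-(E+D)\in\Lambda+\langle F_0\rangle$, which is exactly what is needed to conclude $p_E+\res(D)\in\mathfrak{pts}_0$. If you want to keep your Weyl-group route you must either prove the positive-level chamber statement (for instance via the compactness of the polytopes $\bar{\Pi}_M(l)$ discussed in the paper before Proposition~\ref{pro:finite}) or, more simply, switch to the effectivity argument.
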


\begin{proof}
Let $q\in \mathfrak{pts}_0$. 
By the definition of $\mathfrak{pts}_0$
there is a translate $p$ of $q$,
by a multiple of $\res(F_0)$ which 
is cut out by a $(-1)$-curve $E$.
Given $[D]\in K_S^\perp$ the 
divisor $E+D$ has intersection 
$-1$ with $K_S$, so that $(E+D)^2$
is an odd integer by the genus formula.
Then there exists an integer $r$ 
such that $(E+D+rF_0)^2 = -1$.
By Riemann-Roch the divisor $E+D+rF_0$
is linearly equivalent to an
effective divisor $L$. Write
$L = H + R$, where $H$ contains 
all the irreducible components which 
are not contracted by $\pi$, while
$R$ contains the contracted ones,
so that $[R]\in\Lambda+\langle F_0\rangle$.
By the definition of $H$ it follows 
that it has non-negative intersection
with any $(-2)$-curve of $S$.
Since $H\cdot F_0 = 1$, it follows
that $H^2$ is an odd integer by 
the genus formula.
Then there exists an integer $s$ 
such that $(H+sF_0)^2 = -1$.
By Lemma~\ref{lem:-1} the divisor
$H+sF_0$ is linearly equivalent 
to a $(-1)$-curve $E'$.
The class of the difference $E'- (E+D)
\sim H+sF_0-(H+R-rF_0)$ is in 
$\Lambda+\langle F_0\rangle$.
So, if we denote by $p'$ the intersection
point of $E'$ with $F_0$, the class of 
the difference $p+\res(D) - p'$ is a
multiple of $\res(F_0)$. As a consequence
$p''\sim p+\res(D)$ 
is in $\mathfrak{pts}_0$.
This shows that $K_S^\perp$ acts
on $\mathfrak{pts}_0$. Since $K_S^\perp$
contains the classes of differences of 
$(-1)$-curves and the class of $F_0$,
the action is transitive.
This proves the first statement.
The second statement is clear.
\end{proof}

Let $M$ be a negative definite irreducible root lattice  of  type $A_n,D_n,E_n$, and $\alpha_1,\ldots,\alpha_n$ be its basis of simple  roots. Let $\tilde{M}$ be its extension to an affine root lattice by adding a root $\alpha_0$ such that the radical $\frakf$ of $\tilde{M}$ is generated by  $\frakf = \alpha_0 +\alpha_{\max}$, where $\alpha_{\max} = \sum_{i=1}^nm_i\alpha_i$ is the maximal root with respect to the basis of simple roots.

We extend this definition to the case when the lattice $M$ is the orthogonal sum $M_1\oplus \cdots \oplus M_k$ of irreducible root lattices  by taking the lattice $\tilde{M}$ with the radical of rank one such the quotient by the radical is $M$. 

Fix an integer valued linear function $l$ on $\tilde{M}$ and consider the convex subset in $\tilde{M}_\bbR$ defined by 
$$\Pi_M(l):= \{x\in \tilde{M}_\bbR:x\cdot \alpha_i\le l(\alpha_i),\  i = 0,\ldots,n\}.$$
For any $\alpha_i$, we have $\frakf-\alpha_i$ is a positive root (i.e. a positive integer linear  combination of simple roots). This implies that, for any $x\in \Pi_M(l)$, 
$-x\cdot \alpha_i = x\cdot (\frakf-\alpha_i) \le  l(\frakf-\alpha_i),$ hence $x\cdot 
\alpha_i\ge l(\frakf-\alpha_i)$ is bounded from below by $a_i = l(\frakf-\alpha_i)$. On the other hand, by definition, $x\cdot \alpha_i$ is bounded from above. Let $\alpha_1^*,\ldots,\alpha_n^*$ be the dual basis of $(\alpha_1,\ldots,\alpha_n)$ in $M^\vee$ (the fundamental weights). Then we can write any vector $x\in \Pi_M(l)$ as $x = x_0\frakf+\sum_{i=1}^nx_i\alpha_i^*$ and obtain, by taking the intersection with $\alpha_i$,  that 
$a_i\le t_i\le l(\alpha_i)$ and $\sum_{i=1}^nm_it_i\le l(\alpha_0)$. Obviously, $\Pi_M(l)$ is preserved under translations by the radical spanned by $\frakf$. So, its image $\bar{\Pi}_M(l) =  \Pi_M(l)/\bbR \frakf$ in the quotient 
$M_\bbR$ of $\tilde{M}_\bbR$ by this subspace is  a bounded compact rational polyhedron in $M_\bbR$.  Its intersection with the lattice $M$ consists of all roots $\alpha$ satisfying $\alpha\cdot \alpha_i\le l(\alpha_i), i = 1,\ldots,n$. Since the polyhedron is compact, this set is finite.
Note, that in the case $l \equiv  0$, $\bar{\Pi}_M(l)$ is the image of  the fundamental chamber corresponding to the root basis $(\alpha_0,\ldots,\alpha_n)$ in $M_\bbR$. It is a Coxeter polytope 
bounded by hyperplanes $x_i\le 0, \sum_{i=1}m_ix_i \ge 0$.

We apply this to our situation when the lattice $\tilde{M}$ arises as the lattice $\Lambda$ generated by $(-2)$-curves on $S$ and $[F_0]$ (the last vector is not needed if $F_0$ is reducible). Its radical is the vector $[F_0]$ and the quotient by $\la F_0\ra$ is a root lattice of finite type. Any divisor class $D$ in $\Pic(S)$, via the intersection product,  defines a function $l$ on $\tilde{M}$ as above. Copying the definition of the convex set $\Pi_M(l)$ we introduce the following definition.
 
\begin{definition}
Given $D\in\Pic(S)$ define the following 
{\em tropical Riemann-Roch space}:
\[
 L^{\rm trop}(D)
 :=
 \{R\in\Lambda\, :\, 
 \text{$(D+R)\cdot C\geq 0$
 for any $(-2)$-curve $C$ of $S$}\}.
\]
\end{definition}

The above definition is due to the fact
that $L^{\rm trop}(D)$ behaves like a 
Riemann-Roch space with respect to the 
following tropical sum: if $R = \sum_ia_iR_i$
and $R' = \sum_ib_iR_i$ are in $L^{\rm trop}(D)$
then their tropical sum
$R\oplus R' := \sum_i\max\{a_i,b_i\}R_i$
is in $L^{\rm trop}(D)$. 
 
The next proposition follows from the above discussion. 

\begin{proposition}
\label{pro:finite}
The set $L^{\rm trop}(D)/\langle mK_S\rangle$
is finite for any $D\in\Pic(S)$.
\end{proposition}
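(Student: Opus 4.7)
The plan is to reduce the finiteness assertion directly to the compactness of $\bar{\Pi}_M(l)$ obtained in the paragraphs that introduce the polyhedron $\Pi_M(l)$. To this end I would specialize the abstract set-up to $\tilde{M} = \Lambda$, with radical $\frakf = [F_0]$, and with the integer-valued linear form $l(x) := D\cdot x$. On a relatively minimal rational elliptic surface every $(-2)$-curve $C$ satisfies $F\cdot C = -mK_S\cdot C = 0$, so $C$ is contained in a fiber and, by relative minimality, must be a component of a reducible fiber. The $(-2)$-curves therefore furnish the complete list of simple roots $\alpha_0,\ldots,\alpha_n$ of $\tilde{M}$, including the affine node contributed by each reducible fiber.

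Under the sign change $y = -R$ the inequalities $(D+R)\cdot C \ge 0$ defining $L^{\rm trop}(D)$ become exactly the inequalities $y\cdot\alpha_i \le l(\alpha_i)$ defining $\Pi_M(l)$, so the map $R\mapsto -R$ identifies $L^{\rm trop}(D)$ with $\Pi_M(l)\cap \tilde{M}$. Because $\frakf\cdot\alpha_i = 0$ for every $i$, both $\Pi_M(l)$ and $\tilde{M}$ are invariant under translation by $\bbZ\frakf$, and the natural map $(\Pi_M(l)\cap\tilde{M})/\bbZ\frakf \to \bar{\Pi}_M(l)\cap M$ is well defined and injective. The target is finite by the discussion preceding the definition of $L^{\rm trop}(D)$, so $L^{\rm trop}(D)/\langle F_0\rangle = L^{\rm trop}(D)/\langle K_S\rangle$ is finite. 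Since $\langle K_S\rangle/\langle mK_S\rangle$ has order $m$, this upgrades at once to finiteness of $L^{\rm trop}(D)/\langle mK_S\rangle$.

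The only point I would watch carefully is the identification of the abstract affine root lattice (whose radical has rank one) with the geometric lattice $\Lambda$ when $f$ has several reducible fibers: one must verify that the single class $[F_0]$ simultaneously realizes the affine relation inside each reducible-fiber summand. This is handled by the direct-sum extension of the construction of $\tilde{M}$ flagged in the paragraph just before the definition of $L^{\rm trop}(D)$; modulo this bookkeeping, the proposition follows essentially verbatim from what has already been proved.
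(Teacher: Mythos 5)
Your proof is correct and is exactly the argument the paper intends: the proposition is stated to ``follow from the above discussion,'' i.e.\ from the compactness of $\bar{\Pi}_M(l)$ applied with $\tilde{M}=\Lambda$ and $l$ given by intersection with $D$, which is precisely the specialization you carry out (including the needed observation that all $(-2)$-curves are fiber components, so the radical acts trivially on the defining inequalities). The only cosmetic point is that $K_S$ itself need not lie in $\Lambda$, so the intermediate quotient is better phrased as the quotient by $\Lambda\cap\bbQ\,[F_0]$, which contains $\langle mK_S\rangle$ with finite index; your conclusion is unaffected.
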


\begin{proposition}
\label{pro:bij}
Let $E\in\Exc(S)$ and let $p := \res(E)$ 
be its intersection point with $F_0$. 
Then the assignment 
$R\mapsto E+R-\frac{1}{2}(2E\cdot R+R^2)F_0$
induces an injection
\[
 \phi\colon
 L^{\rm trop}(E)/\langle mK_S\rangle
 \to
 \{E'\in \Exc(S)\, :\, \res(E'-E) \in 
 \langle\res(F_0)\rangle\}.
\]
If $\Lambda = \ker(\res)$ then 
$\phi$ is a bijection.
\end{proposition}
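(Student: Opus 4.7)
The plan is to take each $R \in L^{\rm trop}(E)$ and produce a $(-1)$-curve by applying Lemma~\ref{lem:-1} to the class
\[ D := E + R - \alpha\, F_0, \qquad \alpha := \tfrac{1}{2}(2 E\cdot R + R^{2}). \]
First I would record the elementary numerical inputs that drive every subsequent computation: $F_0^{2} = 0$, $F_0 \cdot K_S = 0$, $E \cdot F_0 = 1$ (since $E$ is an $m$-section and $F = m F_0$), and $R\cdot F_0 = R\cdot K_S = 0$ for every $R\in\Lambda$, because $\Lambda$ is generated by $(-2)$-curves (which lie in fibers) together with the radical class $F_0$. Adjunction applied to $E+R$ forces $2E\cdot R + R^{2}$ to be even, so $\alpha \in \bbZ$, and a short bilinear calculation then yields $D^{2} = -1$ and $D \cdot K_S = -1$.

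Next I would verify the hypotheses of Lemma~\ref{lem:-1}: for every $(-2)$-curve $C$ one has $D\cdot C = (E+R)\cdot C \geq 0$ by the very definition of $L^{\rm trop}(E)$ (using $F_0\cdot C=0$), and $-K_S\sim F_0$ is nef because $F_0$ is an irreducible curve with $F_0^{2}=0$. The lemma yields a unique $(-1)$-curve $E'$ with $[E'] = [D]$, and I set $\phi(R) := E'$. The containment $\res(E'-E)\in\langle \res(F_0)\rangle$ follows from the identity $\res(E'-E) = \res(R) - \alpha\,\res(F_0)$ together with the fact that $\res$ sends $\Lambda$ into $\langle\res(F_0)\rangle$: the $(-2)$-curve generators are disjoint from $F_0$, while $F_0$ itself restricts to $\res(F_0)$. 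Well-definedness on the quotient is a direct check that $\phi(R + nF_0) = \phi(R)$ for every $n\in\bbZ$, which in particular absorbs the generator $mK_S = -mF_0$.

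For injectivity, if $\phi(R_1)=\phi(R_2)$ then the two candidate classes $D_1,D_2$ coincide, so $R_1 - R_2$ is an integer multiple of $F_0$; combined with Proposition~\ref{pro:res} and the fact that $\res(F_0)$ has order exactly $m$, the multiple must lie in $m\bbZ$, placing $R_1-R_2$ in $\langle mK_S\rangle$. For surjectivity under the assumption $\Lambda = \ker(\res)$, given $E'$ with $\res(E'-E)\in\langle \res(F_0)\rangle$, I would pick an integer $c$ with $\res(E'-E + cF_0) = 0$ and set $R := E' - E + cF_0$; Proposition~\ref{pro:res} forces $R\in\Lambda$, the inequality $(E+R)\cdot C = E'\cdot C\geq 0$ (valid since $E'\ne C$ are distinct irreducible curves) puts $R \in L^{\rm trop}(E)$, and a final computation using $E'^{2} = E'\cdot K_S = -1$ and $R\cdot F_0 = 0$ identifies $\alpha$ on the nose, so $\phi(R) = E'$.

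The main technical obstacle I anticipate is tracking the $F_0$-coefficient consistently: in the injectivity step one must extract divisibility by $m$ from the bare statement that $R_1-R_2$ is some integer multiple of $F_0$, and in surjectivity the chosen $c$ is only determined modulo $m$, so the various preimages must be reconciled in the quotient by $\langle mK_S\rangle$. Both hinge on the identification $\Lambda = \ker(\res)$ and on $\res(F_0)$ having order exactly $m$, so this is where the hypothesis of the second assertion enters decisively.
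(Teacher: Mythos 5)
Your proposal is correct and follows essentially the same route as the paper's proof: apply Lemma~\ref{lem:-1} to the class $E+R-\tfrac{1}{2}(2E\cdot R+R^2)F_0$, deduce injectivity from the fact that a multiple of $F_0$ lying in $\Lambda\subseteq\ker(\res)$ must be a multiple of $mF_0$ (since $\res(F_0)$ has order $m$), and obtain surjectivity by writing $E'-E\sim nF_0+R$ with $R\in\ker(\res)=\Lambda$. The extra details you supply (integrality of the $F_0$-coefficient via adjunction, and the explicit check that this coefficient matches in the surjectivity step) are correct and merely make explicit what the paper leaves implicit.
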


\begin{proof}
First of all observe that if 
$R\in L^{\rm trop}(E)$ then 
$D := E+R-\frac{1}{2}(2E\cdot R+R^2)F_0$
has non-negative intersection with all 
the $(-2)$-curves of $S$ and 
$D^2=D\cdot K_S= -1$. 
Then $D$ is linearly equivalent to a $(-1)$-curve $E'$ of $S$, by Lemma~\ref{lem:-1}.
Observe that $E'$ intersects $F_0$ at
a point $p'$ such that $p'-p\in
\langle\res(F_0)\rangle$. Then $\phi$ 
is well defined.
We now show that $\phi$ is injective.
Let $R,R'\in L^{\rm trop}(D)$ be such 
that $\phi(R) = \phi(R')$.
Then $R-R'$ is linearly equivalent to 
an integer multiple of $F_0\sim -K_S$. 
This must be a multiple of $mK_S$ because
it lies in $\Lambda$, which proves the
statement.
Assume now that $\Lambda = \ker(\res)$.
To prove the surjectivity of $\phi$, let
$E'$ be a $(-1)$-curve of $S$ such that
$\res(E'-E) = n\res(F_0)$ for some integer
$n$. Then $E'-E \sim nF_0+R$ with
$R\in\ker(\res) = \Lambda$.
In particular $E+R+nF_0\sim E'$ has
non-negative intersection with all the
$(-2)$-curves of $S$ and the same holds
for $E+R$, so that $R\in L^{\rm trop}(E)$.
\end{proof}

\begin{remark}
\label{rem:act}
The surface $S$ is acted by the group
of $\eta$-rational points of $\Pic^0(F_\eta)$,
where $F_\eta$ is the generic fiber.
This group is the homomorphic
image of $K_S^\perp$ via the pullback of the
restriction map to the generic fiber.
The kernel consists of the classes 
of vertical divisors: $\Lambda+
\langle F_0\rangle$.
Thus $K_S^\perp/\Lambda+\langle F_0\rangle$
acts on $S$. The map
\[
 \res\colon \Exc(S)\to \mathfrak{pts}_0
\]
is equivariant with respect to the above 
action. If $\Lambda = \ker(\res)$ then 
the action is free on the codomain and 
the latter is subdivided into $m$ orbits
where $m$ is as usual the index of the
rational elliptic surface.
In particular if $\Lambda$ has rank $9$
then $\Exc(S)$ is finite. To prove this 
observe first that $\mathfrak{pts}_0$ is finite,
being the union of a finite number of 
orbits for the action of a finite group.
By Proposition~\ref{pro:res} the equality
$\Lambda = \ker(\res)$ holds.
Thus, by Proposition~\ref{pro:bij},
the number of elements of $\Exc(S)$
which contain a given point $q\in
\mathfrak{pts}_0$ is in bijection with
$L^{\rm trop}(E)/\langle mK_S\rangle$,
where $E$ is a $(-1)$-curve which 
contains $q$.  The latter set  is finite
by Proposition~\ref{pro:finite}.
\end{remark}

Let $X$ be a jacobian elliptic surface. A choice of a section $E$ defines a group law on the generic fiber $F_\eta$ and hence the negation involution $a\mapsto -a$. This involution extends to a biregular involution of the surface which is the lift of the Bertini involution on the weak del Pezzo surface of degree $1$ obtained by blowing down the section $E$. Its fixed locus consists of the union of $E$ and a $3$-section of the fibration (maybe reducible).

There is analogue of this involution on any  Halphen surface $X$ of index $2$. Any $(-1)$-curve defines a $2$-section of the elliptic fibration, and as such it defines a point $x_\eta$ of degree $2$ on its generic fiber $F_\eta$. The linear system $|x_\eta|$ defines a separable degree $2$ map $F_\eta\to \bbP_\eta^1$ over the generic point $\eta$ of the base. Its deck transformation defines an involution on $F_\eta$ defined over $\eta$. The minimality of the fibration allows us to extend it to a biregular involution of the surface $X$, which we call the \emph{Bertini involution} and denote it by $\beta_E$. The scheme of the fixed points of this involution on $F_\eta$ is a reduced effective divisor of degree $4$. Its closure $B_E$ in $X$ is a curve that intersects each fiber with multiplicity $4$ (the curve $E$ is invariant but not pointwise fixed!). It intersects transversally each smooth fiber.

\begin{proposition}\label{freeaction}
The action of $\beta_E^*$ is trivial
on $L^{\rm trop}(E)$.
\end{proposition}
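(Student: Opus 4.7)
The plan is to reduce the claim, via the $\beta_E^*$-equivariance of the map $\phi$ of Proposition~\ref{pro:bij}, to showing that each $(-1)$-curve $E'$ in the image of $\phi$ is preserved set-theoretically by $\beta_E$, and then to use the injectivity of $\phi$ modulo $\langle 2K_S\rangle$ to recover invariance of $R$ itself.

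First I will verify the equivariance. By its very definition, $\beta_E$ acts on the generic fiber $F_\eta$ as the deck transformation of the degree two map given by $|x_\eta|$, where $x_\eta=E|_{F_\eta}$, so it preserves $x_\eta$ as a closed subscheme of $F_\eta$. Taking closures gives $\beta_E(E)=E$ set-theoretically, hence $\beta_E^*[E]=[E]$. Because $\beta_E$ preserves each fiber of $f$, also $\beta_E^*(F_0)=F_0$; combined with the isometry property of $\beta_E^*$, the integer $s=E\cdot R+R^2/2$ is invariant under $R\mapsto \beta_E^*(R)$, so $\beta_E^*(\phi(R))=E+\beta_E^*(R)-sF_0=\phi(\beta_E^*(R))$ for every $R\in\Lambda$.

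Second, I will show $\beta_E^*[E']=[E']$ for every $E'=\phi(R)$ with $R\in L^{\rm trop}(E)$. Since $R$ and $F_0$ are vertical, $E'|_{F_\eta}$ represents the class $[x_\eta]$. Because $E'$ is an irreducible $2$-section and $\cha(\bbk)\ne 2$ ensures that $E'\to\bbP^1$ is generically separable, the restriction $E'|_{F_\eta}$ is an irreducible closed point $Q$ of degree $2$ on $F_\eta$ over $K$; its two geometric points $P,P'$ over $\overline{K}$ satisfy $P+P'\sim x_\eta$, hence $P'=x_\eta-P=\beta_E(P)$, so $\{P,P'\}$ is a single $\beta_E$-orbit and $\beta_E(Q)=Q$ as subschemes of $F_\eta$. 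Passing to closures in $X$ yields $\beta_E(E')=E'$ as subvarieties, and therefore $\beta_E^*[E']=[E']$.

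Combining both steps, $\phi(\beta_E^*(R))=\beta_E^*(\phi(R))=\phi(R)$. By the injectivity of $\phi$ on $L^{\rm trop}(E)/\langle 2K_S\rangle$ from Proposition~\ref{pro:bij}, we deduce $\beta_E^*(R)-R=2cK_S$ for some integer $c$. Applying $\beta_E^*$ a second time, and using $\beta_E^*(K_S)=K_S$ together with $(\beta_E^*)^2=\id$, we obtain $R=\beta_E^*(R)+2cK_S=R+4cK_S$, which forces $c=0$ and $\beta_E^*(R)=R$. The main obstacle is the second step: one must argue that the two geometric branches of the $2$-section $E'$ over $F_\eta$ actually form a $\beta_E$-orbit, which rests on the linear equivalence $[E'|_{F_\eta}]=[x_\eta]$ and on the injectivity of the Abel--Jacobi map from a genus one curve into its degree one Picard scheme, and which uses $\cha(\bbk)\ne 2$ to guarantee that $E'|_{F_\eta}$ consists of two distinct geometric points rather than a single point counted with multiplicity.
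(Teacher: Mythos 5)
Your proof is correct and rests on the same key observation as the paper's: the $(-1)$-curve representing $E+R+nF_0$ cuts out on the generic fiber a divisor belonging to the $g^1_2$ defined by $E$, hence is preserved by $\beta_E$, and the invariance of $[E]$ and $[F_0]$ then forces the invariance of $R$. The only difference is cosmetic: your own equivariance computation already gives $E+\beta_E^*(R)-sF_0 = E+R-sF_0$ in $\Pic(S)$, from which $\beta_E^*(R)=R$ follows at once, so the detour through the injectivity of $\phi$ modulo $\langle 2K_S\rangle$ and the $(\beta_E^*)^2=\id$ argument is unnecessary.
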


\begin{proof}
Observe that any $(-1)$-curve $E'$ 
which has the same restriction of $E$
to the generic fiber is preserved by 
$\beta_E$. Indeed the divisors cut 
out by $E$ and $E'$ on a general fiber
are in the same $g^1_2$ and so each
of them is mapped to itself by
$\beta_E$.
Now, for any $R\in L^{\rm trop}(E)$,
adding to the divisor $E+R$ some  multiple 
of $F_0$, we obtain a divisor $D\in L^{\rm trop}(E)$ with $D\cdot K_S = D^2 = -1$. Applying Lemma \ref{lem:-1}, we obtain that $D$ is linearly equivalent to 
a $(-1)$-curve $E'$ as above.
The statement follows.
\end{proof}

%%%%%%%%%%%%%%%%%%%%%%%%%%%%%%%%%%%%%%%
\section{The Hesse pencil} 

The Hesse pencil is the pencil of plane cubic curves of the form 
$$H_{\lambda:\mu}: \{\lambda(x^3+y^3+z^3)+\mu xyz = 0\}.$$
It has $9$ base points $x_1,\ldots,x_9$ with coordinates $(0:1:-\epsilon), (1:0:-\epsilon), (1:-\epsilon:0)$, where $\epsilon^3 = 1$. It has $4$ singular fibers, whose equations are given by (see \cite{Artebani}, \cite[3.1.3]{CAG} for this and other information about the Hesse pencil): 
\begin{align*}
H_{0:1}: \{ xyz = 0 \}, \\
H_{1:-3}: \{ (x+y+z)(x+\epsilon y+\epsilon^2 z)(x+\epsilon^2 y+\epsilon z)=0 \}, \\
H_{1:-3\epsilon}: \{ (x+\epsilon y+z)(x+\epsilon^2 y+\epsilon^2 z)(x+y+\epsilon z)=0 \}, \\
H_{1:-3\epsilon^2}: \{ (x+\epsilon^2 y+z)(x+\epsilon y+ \epsilon z)(x+y+\epsilon z)=0 \}.
\end{align*}
Each base point is an inflection point of a smooth member of the pencil. 
Each line contains $3$ base points, and each base point lies on $4$ lines. The nine points and $12$ lines form the abstract Hesse configuration $(9_4,12_3)$. All nonsingular members of the pencil have an inflection point at each base point. The polar conic of a general member of the pencil with pole at a base point $x_i$ is equal to the union of the tangent line and another line $\ell_i$ that does not depend on the choice of a nonsingular member of the pencil. It is called a \emph{harmonic polar line} of the pencil.  Each line $\ell_i$ intersects a general member with multiplicity $3$.  It passes through one of the singular points of a singular member and intersects the opposite side of the triangle at one point. The $12$ singular points of the four triangles and $9$ harmonic polar lines form an abstract configuration $(12_3,9_4)$ dual to the Hesse configuration. 

The blow-up $\pi \colon S\to \bbP^2$ of the base points $x_1,\ldots,x_9$ is a rational elliptic surface $f \colon S\to \bbP^1$ of index $1$. The exceptional curves $\calE_i = \pi^{-1}(x_i)$ are sections of the fibration. As we remarked earlier, they correspond to rational points of the elliptic curve $F_\eta$ over the field $K = \Bbbk(t)$ of rational functions on the base $\bbP^1$ of the fibration. Fixing one base point, say $x_1$ and the corresponding section $E_1$, and hence a rational point on $F_\eta$, we equip $F_\eta$ with a group law.  It defines a group law on any nonsingular fiber $F$  with the zero point $F\cap E_1$. It also defines a group law of a one-dimensional commutative algebraic group on the set of nonsingular points of each fiber. The group of rational points on $F_\eta$ is a finitely generated abelian group, called the \emph{Mordell-Weil group} of the elliptic surface. Any element of this group defines a translation automorphism of $F_\eta$ that extends to a biregular automorphisms of $S$. In our case the 9 sections generate the Mordell-Weil group isomorphic to $(\bbZ/3\bbZ)^{\oplus 2}$.

\begin{definition} An \emph{Halphen pencil of Hesse type} is an Halphen pencil defining a rational elliptic surface (\emph{Halphen surface of Hesse type}) with the jacobian fibration isomorphic to the elliptic fibration defined by the Hesse pencil.
\end{definition}

Note that the correspondence between relatively minimal rational elliptic surfaces and  Halphen pencils of index $m$ is far from being bijective since it depends on a choice of a birational morphism $\pi \colon S\to \bbP^2$.  Two Halphen pencils corresponding to the same elliptic surface differ by a Cremona transformation of the plane.

%%%%%%%%%%%%%%%%%%%%%%%%%%%%%%%%%%%%%%%%%%%%%
\section{The Chilean configuration of conics} \label{chileconf}

Our goal in this  section is to give an explicit construction of an Halphen pencil of Hesse type and index $2$ such that its four singular members are unions of three conics.

Although we know that there exists a rational elliptic surface of index $2$ with the jacobian surface defined by the Hesse pencil, it is not obvious that we can find a morphism $\pi \colon S\to \bbP^2$ such that the images of the singular fibers are the unions of three conics (other possibility is the union of two lines and a plane quartic). 

To do so we first fix a nonsingular plane cubic $C: \{G_3 = 0\}$. Then we choose $9$ distinct points such that they can be partitioned in $4$ different ways in three subsets $(x_i,x_j,x_k)$ of $3$ points such that $(x_i+x_j+x_k-h) \sim 0$. In this way we have $12$ triples. Note that the base points on each line component $x+\epsilon y+\epsilon^2 z = 0$ of a singular member of the Hesse pencil add up to $0$ in the group law on a fixed nonsingular member of the pencil with the zero point defined by a choice of one of the base points $x_i$. We can parameterize the set of line components of each reducible fiber by $\bbF_3$ in such a way that each section is represented by a vector $(a_1,a_2,a_3,a_4)$ indicating which component of a fiber it intersects. This identifies the set of sections with the plane $\bbF_3^2$ embedded in $\bbF_3^4$ in such a way that  the difference of two vectors has only one zero coordinate \cite{MirandaPersson}. For example, we may assume that

\begin{table}[h]
\centering
\begin{tabular}{|c|c|c|c|c|c|}
\hline
$x_1$&(0,0,0,0)&$x_4$&(1,1,0,2)&$x_7$&(2,2,0,1)\\
$x_2$&(0,1,2,1)&$x_5$&(1,2,2,0)&$x_8$&(2,1,1,0)\\
$x_3$&(0,2,1,2)&$x_6$&(1,0,1,1)&$x_9$&(2,0,2,2)\\ \hline
\end{tabular}
\label{table1}
%\caption{}
\end{table}

Now we identify the 12 line components of reducible fibers with lines in the finite affine plane $\bbF_3^2$. Three points lie on a line if and only if the sum of the corresponding $4$-vectors is equal to $0$. This gives a finite projective geometry realization of the  Hesse configuration. Note that $3$ points in a line add up to zero in the group law on a nonsingular member $F = F_t$ of the pencil with the zero point equal to $x_1$. The four lines containing the origin $x_1$ are given by the first column of the previous table, the first row, the diagonal and $x_1,x_6,x_8$.

 We can order the coordinates of the nine base points of the Hesse pencil as follows:
\begin{gather}
x_1 = (0:1:-1), \quad x_2 = (0: 1:-\epsilon), \quad x_3 = (0:1:-\epsilon^2), \notag\\
x_4 = (1:0:-1), \quad x_5 = (1: 0:-\epsilon^2), \quad x_6 = (1:0:-\epsilon),\notag \\
x_7 = (1:-1:0), \quad x_8 = ( 1:-\epsilon:0), \quad x_9 = (1:-\epsilon^2:0),\label{points}
\end{gather}
where $\epsilon$ is a primitive 3rd root of unity.

Let $F[3]$ be  the group of $3$-torsion points on the curve $F$ with the point $x_1$ as the zero element. Then the lines of the Hesse configuration that contain $x_1$ contain 3 points which form a cyclic subgroup of $F[3]$. Each line in the same member of the pencil is identified with a coset with respect to this subgroup. 

Now fix a non-trivial $2$-torsion $\tau$ point on $F$. Consider 9 points $y_i$ equal to  $x_i+ \tau$ in the group law. Then each of the 12 cosets define a set of 3 points on $F$ added up to $\tau$ in the group law. We have partitioned the 9 points in 12 different ways as the union of three triples of points added up to $\tau$ in the group law. Take two cosets  $H_i,H_j$ with no common elements. Then their union defines a set of 6 points whose sum in the group law is equal to zero. Take 3 cosets $H_i,H_j,H_k$ defining a singular fiber of the Hesse pencil. Adding up $\tau$, we obtain 3 disjoint sets $H_i+\tau, H_j+\tau, H_k+\tau$ of triples on $F$ that  add up to $\tau$. It follows that there exists a conic  that intersects $F$ at 6 points from two of these sets. The sum of these three conics pass through 9 points of the Halphen pencil with multiplicity 2 at each point. This defines a singular fiber of the Halphen pencil equal to the union of 3 conics. We have 4 singular fibers, hence 12 conics. Each base point defines a 2-section of the elliptic fibration that intersects 2 components of a singular fiber. This means that each base point lies on 8 conics, two from each fiber. Moreover, each conic passes through 6 points, hence we have constructed the Chilean configuration $(12_8,9_6)$ of 12 conics and 9 points in the plane.

\begin{remark} $F[3]$ is clearly isomorphic to the affine plane over the prime field $\mathbf F_3$ which can be completed to the projective plane, with the line at infinity distinguished. Components of reducible fibers corresponding to 'finite' lines in the projective plane, two components belong to the same fiber if and only if they are parallel (meaning cosets of the same cyclic subgroup), i.e. meeting at the line at infinity, whose points hence correspond to the reducible fibers, each given by the pencil of lines through the points. Points will correspond to sections, and we see how two components can only share one section and each section will intersect exactly one component of each fiber. Note that with a proper choice of zero, lines in the finite projective planes correspond to bona fide lines in the complex setting, but for the combinatorial codification this is not important. In fact we can by a slight abuse of notation refer to the same finite plane when we are talking about the translated points in the index two setting. Then three points on a line, does no longer mean that they are literally on a line. This extended use will turn out to be useful further down.  Thus any two parallel lines in the new interpretation can be associated to a conic component of the Halphen pencil.
\end{remark}

We conclude the section with some
observation of the automorphism group
of $S$. First of all any birational automorphism that preserves the fibration is a biregular automorphism. This follows from the relative minimality assumption since no horizontal nor vertical curve can be blow down.  An automorphism of $S$ preserves the pencil but may act non-trivially on it. The subgroup of automorphisms that act trivially on the base (which could be called \emph{vertical}) is easy to describe. It is mapped to the group of automorphisms of the jacobian fibration with the kernel equal to the group of translations of $S$ by the elements of the Mordell-Weil group of the jacobian fibration. The group of automorphisms of the jacobian fibration is the group of automorphisms of its generic fiber defined over the field of rational functions of the base. It is the semi-direct product of the group of Mordell-Weil group (defined when we fix a section) and the group of automorphisms fixing a section. The latter depends on the  $j$-invariant of the elliptic curve but always contains the Bertini involution.  When $m = 2$, as we explained in Section 2, we have similar involutions, the Bertini involutions $\beta_E$. We refer for the study of  automorphism groups of Halphen surfaces to \cite{DolgachevMartin}.

Recall that the Hesse group $G_{216}$ is one of maximal finite groups of projective transformations of the plane. It can be realized as the group of automorphisms of the Hesse pencil of cubic curves, or, equivalently, as the group of automorphisms of a rational  Halphen surface of Hesse type and of index 1 (see \cite{Artebani} or \cite{CAG}). The group $G_{216}$ has a normal subgroup $H$ isomorphic to $(\bbZ/3\bbZ)^2$ and the quotient is isomorphic to the binary octahedral group $2.\frakA_4$ which is isomorphic to $\SL(2,\bbF_3)$. The pre-image of the central subgroup of $\SL(2,\bbF_3)$ under the quotient map is the subgroup $G_{18}$ isomorphic to the semi-direct product $(\bbZ/3\bbZ)^2\rtimes (\bbZ/2\bbZ) \cong \bbZ/3\bbZ\rtimes\frakS_3.$
 It consists of automorphisms generated by translation and any of the nine Bertini involutions defined by the negation automorphism of a general fiber when we fix the structure of group on it by fixing one of the sections. The projection to $G_{168}\to \frakA_4$ corresponds to the action on the base of the elliptic fibration as the octahedral group of automorphisms of $\bbP^1$. 

\begin{corollary} 
\label{aut}
The automorphism group of an 
Halphen surface of index $2$
of Hesse type is isomorphic to the subgroup $G_{18}$ of the Hesse group $G_{216}$ of projective transformations of $\bbP^2$. Its normal subgroup of order $9$ consists of translation automorphism by elements of the Mordell-Weil group of the jacobian fibration. Its $9$ involutions are the Bertini transformations.
\end{corollary}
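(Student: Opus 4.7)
The plan is to analyze $\Aut(S)$ through its action on the elliptic fibration $f\colon S\to\bbP^1$. Since the pencil $|-2K_S|$ is intrinsic to $S$, every $\alpha\in\Aut(S)$ preserves $f$ and induces $\bar\alpha\in\Aut(\bbP^1)$. The multiple fiber $2F_0$ is the unique such fiber, so $\bar\alpha$ fixes $t_0:=f(F_0)$; moreover $\bar\alpha$ permutes the four points $s_1,\ldots,s_4\in\bbP^1$ over which the reducible $I_3$-fibers lie. This yields an exact sequence $1\to\Aut(S)_v\to\Aut(S)\to\Aut(\bbP^1,t_0)$, and the task splits into computing the vertical kernel and showing the image is trivial.

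First I would identify the vertical subgroup. By relative minimality any vertical automorphism restricts faithfully to the generic fiber $F_\eta$, giving an embedding $\Aut(S)_v\hookrightarrow\Aut_K(F_\eta)$. Since $F_\eta$ carries a $K$-rational divisor class of degree $2$ (cut out by any $(-1)$-curve, which is a $2$-section), it admits a canonical degree-$2$ involution $\iota$ defined over $K$, and one has $\Aut_K(F_\eta)=J_\eta(K)\rtimes\langle\iota\rangle$, where $J_\eta(K)=(\bbZ/3\bbZ)^2$ is the Mordell--Weil group of the Hesse jacobian. Both Mordell--Weil translations and any Bertini involution $\beta_E$ extend to biregular automorphisms of $S$ by relative minimality, so the embedding is an isomorphism $\Aut(S)_v\cong(\bbZ/3\bbZ)^2\rtimes\bbZ/2\bbZ=G_{18}$. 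In this semidirect product all nine elements of the form $(a,\iota)$ are involutions (because $(\bbZ/3\bbZ)^2$ has no $2$-torsion), and they correspond to the nine Bertini involutions, obtained from one another by translating $E$ by Mordell--Weil elements.

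The main obstacle will be showing that the image of $\Aut(S)$ in $\Aut(\bbP^1,t_0)$ is trivial. The equianharmonic $4$-tuple $\{s_1,\ldots,s_4\}$ has stabilizer $\frakA_4$ in $\PGL_2$ (this is precisely the image of the Hesse group $G_{216}$ in $\Aut(\bbP^1)$), so the image of $\Aut(S)$ lies in the stabilizer of $t_0$ in $\frakA_4$. For $t_0$ outside the three exceptional orbits of $\frakA_4$ on $\bbP^1$ this stabilizer is already trivial. In the remaining cases a non-trivial $\bar\alpha$ would lift to an automorphism of $S$ whose action on $F_0$, after passing to the jacobian and choosing a section, must preserve both the zero and the distinguished non-zero $2$-torsion class $\tau\in\Pic^0(F_0)$ that defines the torsor structure of $S$; a direct check, detailed in \cite{DolgachevMartin}, shows that no such non-trivial element preserves $\tau$.

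Combining the two steps gives $\Aut(S)\cong G_{18}$. The identification with the subgroup $G_{18}\subset G_{216}$ recalled before the corollary follows by matching the normal subgroup of Mordell--Weil translations with the normal $(\bbZ/3\bbZ)^2\subset G_{216}$ and identifying the nine Bertini involutions with the nine involutions of $G_{18}$.
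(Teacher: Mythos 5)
Your decomposition of $\Aut(S)$ into a vertical part and an image in $\Aut(\bbP^1,t_0)$ is a genuinely different route from the paper's, which instead uses the uniqueness of the Chilean set of exceptional curves (Proposition \ref{prop:unique1}, Corollary \ref{invariant}) to descend every automorphism to a projective transformation of $\bbP^2$ and then runs a fixed-point and tangency argument on the plane cubic $\bar{F}_0$ and on $E_1$. Your treatment of the vertical subgroup is essentially correct and arguably cleaner, though you should justify why the image of $\Aut_K(F_\eta)\to \Aut_K(J_\eta,O)$ is only $\{\pm 1\}$: this uses that the Hesse fibration is not isotrivial, so $j\in K\setminus\Bbbk$ and the jacobian has no extra automorphisms over $K$.

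The genuine gap is in the horizontal step. For $t_0$ lying in one of the exceptional orbits of $\frakA_4$ you assert that ``a direct check shows that no such non-trivial element preserves $\tau$'' and defer to \cite{DolgachevMartin}; this is exactly the content that has to be proved, and the claim as you state it is not correct in the one case that matters. If $t_0$ lies in the size-$6$ orbit, the fiber $J_{t_0}$ has $j$-invariant $1728$ and its stabilizer in $\frakA_4$ is generated by an involution $\bar{\gamma}$ whose lifts $\gamma\in G_{216}$ have order $4$ (they map to order-$4$ elements of $\SL(2,\bbF_3)$, with $\gamma^2$ a Bertini involution); such a $\gamma$ restricts to $J_{t_0}$ as an order-$4$ automorphism, and the induced action on $\Pic^0(J_{t_0})$ is complex multiplication by $\pm i$, which fixes exactly one of the three non-trivial $2$-torsion classes. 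Hence for one of the three torsors over such a $t_0$ the obstruction you propose (non-preservation of $\tau$) vanishes, and you must exclude the lift by some other argument --- which is precisely what the paper's analysis of the second fixed point $s_0=F_1\cap E_1$ and the tangency of $E_1$ to $F_1$ there accomplishes. (The size-$4$ orbit, with $j=0$ fibers, is genuinely harmless: an order-$3$ automorphism of $\Pic^0(J_{t_0})$ acts on the $2$-torsion $\cong\bbF_2^2$ without non-zero fixed vectors.) As written, your argument establishes the corollary only for generic members of the one-parameter family.
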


\begin{proof}  An ordered Chilean set $\calE$ of $(-1)$-curves  defines a geometric basis $(e_0,e_1,\ldots,e_9)$ of $\Pic(S)$. By Corollary \ref{invariant}, $e_1+\cdots+e_9$ is invariant with respect to the action of $\Aut(S)$ on $\Pic(S)$, and since $K_S = 3e_0-(e_1+\cdots+e_9)$ is obviously invariant too, we obtain that the divisor class $e_0$ is invariant. This shows that any $g\in \Aut(S)$ descends to an automorphism $T$ of $\bbP^2$ under the blowing down morphism $\pi:S\to \bbP^2$ defined by the linear system $|e_0|$. Thus we can identify $\Aut(S)$ with a group of projective transformations of $\bbP^2$. Composing any $g\in \Aut(S)$ with the translation automorphism $t_a$ for some element of the Mordell-Weil group, we may assume that $g$ fixes a $(-1)$-curve $E_1\in \calE$. Then it commutes with the Bertini involution $\beta_{E_1}$, descends to the del Pezzo surface $\calD$  obtained by blowing down $E_1$, hence fixes the point $s'$ on the image of $F_0$ on $\calD$ equal to the unique base point of the anti-canonical linear system $|-K_{\calD}|$. We identify this point with a point on $F_0$. Thus $T$ fixes 4 points on the image $\bar{F}_0$ of $F_0$ in the plane:the images of $s',s = E_1\cap F_0,$ and the images of the fixed points of $\beta_{E_1}$ on $F_0$.  

 Since a projective transformation cannot pointwise fix a cubic curve, $g^2$ has 4 fixed points on $F_0$ and hence is either identity or coincides with an involution of $F_0$ with 4 fixed point. Since $\beta_{E_1}$  fixes the same points on $F_0$, $g^2 = \beta_{E_1}$. It follows that $g$ has two fixed points on $E_1$:one is $s$ and an another is $s_0 = F_1\cap E_1$, where $F_1$ is some other fiber. Since $E_1$ intersects $F_1$ with multiplicity $2$, it must be tangent to $F_1$ at the fixed point $s_0$. This is obviously impossible since  the blow-up of $s_0$ would have  a fixed point with three different tangent directions defined by $F_1,E_1$ and the exceptional curve, hence acts identity at the tangent space of the surface at this point is hence identity (because we assume that characteristic is different from 2).  Thus $g$ coincides with $\beta_{E_1}$ and this proves the assertion.
\end{proof}

\begin{remark} The same proof works for degenerate Chilean configuration when $2F_0$ is a double fiber. In fact, it is even easier. The group of automorphisms contains $G_{18}$ and leaves invariant a triangle of lines in the plane. This means that the group $G$ of automorphisms is an imprimitive linear group, i.e. it contains a normal subgroup that arises from a reducible linear representation. The known classification of finite subgroup of projective automorphisms shows that this group coincides with $G_{18}$.
\end{remark}

%%%%%%%%%%%%%%%%%%%%%%%%%%%%%%%%%%%%%%%
\section{Explicit formulas and uniqueness}

We take the harmonic polar line $\ell_1$ with respect to the point $x_7 = (1:-1:0)$. It has equation $x-y = 0$. It intersects the nonsingular cubics $H_{1:t}$ at three $2$-torsion points $(1:1:a)$, where $a$ is a root of the cubic equation $X^3+tX+2 = 0$. This gives us 9 base points of the Halphen pencil of Hesse type and index $2$:

\begin{align*}
p_1 &= (a : 1 : 1),  & p_2 & = (\epsilon a : \epsilon^2 : 1), & p_3 &= (\epsilon^2 a : \epsilon : 1), \notag\\
p_4 &= (1 : a : 1),  & p_5 & = (\epsilon : \epsilon^2 a : 1), & p_6 &= (\epsilon^2 : \epsilon a : 1),\notag \\
p_7 &= (1 : 1 : a),  & p_8 & = (\epsilon : \epsilon^2 : a), & p_9 &= (\epsilon^2 : \epsilon : a)
\end{align*}

Given the nine base points it is easy to
compute the classes of the $(-2)$-curves
of $S$. These are all the conics through 
six of the nine points whose sum is zero.
The strict transforms of these conics are
the irreducible components of the four 
reducible fibers of type $I_3$ of the following 
pencil of plane sextics: 

\beq\label{eqpencil}
   x^3y^3 + x^3z^3 + y^3z^3 
   -\frac{1}{a}(x^4yz + xy^4z + xyz^4)
   +\frac{1-a^3}{a^2}x^2y^2z^2 
    + \lambda
    \left(x^3 + y^3 + z^3 -\frac{a^3+2}{a}xyz\right)^2 = 0.
\eeq

Recall that for any smooth plane cubic curve $C \subset \bbP^2$, the dual curve $C^*$ is a plane sextic in the dual plane $\check{\bbP}^2$ with 9 cusps. If one considers a plane cubic $F_3\subset \check{\bbP}^2$ passing through these cusps, we can define a pencil of plane sextic curves spanned by $C^*$ and  $F_3$. It is an example of an Halphen pencil of index $2$. We say that this pencil is generated by the dual plane cubic $C^*$. 

Recall from \cite[3.2]{CAG} that the \emph{Caylean} of a plane cubic is the locus of lines $\la p,q\ra$ that are line components of reducible polar conics.  

\begin{proposition}\label{dualcubic} 
The Halphen pencil of Hesse type in equation \eqref{eqpencil} is generated by $C^*$, where $C=\{u^3 + v^3 + w^3 - 3auvw = 0\}$, and the multiple fiber is the Caylean of $C$.

\end{proposition}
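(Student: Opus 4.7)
The plan is to verify two explicit identifications: (i) the sextic $\lambda=0$ member of \eqref{eqpencil} is the dual curve $C^*$, and (ii) the cubic $\hat C := \{x^3+y^3+z^3-\tfrac{a^3+2}{a}xyz=0\}$, whose square is the $\lambda=\infty$ member, is the Caylean of $C$.

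For (i), $C^*$ is the image of $C$ under the Gauss map $(u,v,w)\mapsto (x,y,z) := (F_u,F_v,F_w)$ where $F=u^3+v^3+w^3-3auvw$, so one must produce the unique (up to scalar) degree-$6$ polynomial $G(x,y,z)$ satisfying $G(F_u,F_v,F_w)\equiv 0\pmod F$. The Hesse group $\frakS_3\ltimes(\bbZ/3\bbZ)^2$ acts on both source and target: the diagonal action $(u,v,w)\mapsto(\epsilon^i u,\epsilon^j v,\epsilon^k w)$ with $i+j+k\equiv 0\pmod 3$ pulls back through the Gauss map to the same type of diagonal action, so every monomial $x^ay^bz^c$ of $G$ must satisfy $a\equiv b\equiv c\pmod 3$. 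Together with $\frakS_3$-symmetry this leaves only the four candidate orbits $\{x^6+y^6+z^6\}$, $\{x^3y^3+y^3z^3+z^3x^3\}$, $\{x^4yz+xy^4z+xyz^4\}$, and $\{x^2y^2z^2\}$. Imposing $F\mid G(F_u,F_v,F_w)$ yields linear conditions on these four scalars that pin $G$ down up to overall scaling, producing precisely the sextic in \eqref{eqpencil}. Its $9$ cusps are the Gauss images of the $9$ inflection points $x_1,\ldots,x_9$ of $C$ listed in \eqref{points}: the tangent to $C$ at $(0:1:-1)$ is $au+v+w=0$, giving the dual point $(a:1:1)=p_1$, and Hesse symmetry identifies the full cusp set with $\{p_1,\ldots,p_9\}$.

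For (ii), a direct substitution verifies that each $p_i$ lies on $\hat C$, for example $a^3+1+1-\tfrac{a^3+2}{a}\cdot a=0$ at $p_1$, so $\hat C^2$ is a sextic double along the nine base points and $\{C^*+\lambda\hat C^2\}$ is the Halphen pencil of index $2$ given by \eqref{eqpencil}. To identify $\hat C$ with the Caylean of $C$, recall that for each $q\in\He(C)$ the polar conic $P_q(C)$ factors as a pair of lines, both of which lie on the Caylean. At $q=x_1$ one computes $P_{x_1}(C)=3(v-w)(au+v+w)$, so the Caylean contains both the tangent-dual $p_1=(a:1:1)$ and the harmonic-polar-dual $(0:1:-1)=x_1$; the Hesse symmetry extends this to show that the Caylean passes through all $18$ points of $\{p_1,\ldots,p_9\}\cup\{x_1,\ldots,x_9\}$ viewed in $\check\bbP^2$. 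By Cayley--Bacharach the cubics through $x_1,\ldots,x_9$ form a Hesse pencil, and the extra incidence with $p_1$ selects $\hat C$ uniquely inside this pencil; hence the Caylean equals $\hat C$.

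The main obstacle is step (i): the Hesse symmetry is what makes the Gauss-map elimination tractable, reducing it to determining three coefficient ratios, from which the explicit constants $1/a$ and $(1-a^3)/a^2$ of \eqref{eqpencil} emerge.
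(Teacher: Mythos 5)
Your step (ii) is essentially sound and is in fact more self-contained than the paper's treatment: the factorization $P_{x_1}(C)=3(v-w)(au+v+w)$ and the argument that a cubic through $x_1,\dots,x_9$ and $p_1$ is unique correctly pin down the Caylean as $\hat C$, where the paper simply quotes the formula \cite[(3.27)]{CAG}. The genuine error is in step (i). The $\lambda=0$ member of \eqref{eqpencil} is \emph{not} $C^*$: it equals $a^{-2}(xy-az^2)(xz-ay^2)(yz-ax^2)$, the union of three conics, i.e.\ one of the four reducible $I_3$ fibers of the pencil. It is reducible and has only nodes at the nine base points, whereas the dual of a smooth cubic is an irreducible $9$-cuspidal sextic. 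Carrying out the elimination $F\mid G(F_u,F_v,F_w)$ that you describe does not produce the $\lambda=0$ member with coefficients $1/a$ and $(1-a^3)/a^2$ and vanishing $x^6$-term; it produces
\[
x^6+y^6+z^6+(4a^3-2)(x^3y^3+x^3z^3+y^3z^3)-6a^2(x^4yz+xy^4z+xyz^4)-3a(a^3-4)x^2y^2z^2=0,
\]
with a nonzero coefficient on the orbit $x^6+y^6+z^6$ that your symmetry analysis allows but your conclusion silently discards. This curve is the member of \eqref{eqpencil} at a particular \emph{nonzero} value of $\lambda$ --- exactly the unique $9$-cuspidal member that the paper exhibits and then dualizes back to $C$.

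The repair is short and is already contained in your own computation. You correctly identify the nine cusps of $C^*$ as the Gauss images $p_1,\dots,p_9$ of the inflection points of $C$, which are the nine base points of \eqref{eqpencil}. Since the sextics with double points at all nine base points form precisely the two-dimensional space of sections of $-2K_S$ on the blown-up surface, i.e.\ the pencil \eqref{eqpencil} itself, the $9$-cuspidal sextic $C^*$ is automatically a member of the pencil (necessarily the one at the nonzero $\lambda$ above), and together with the double cubic $\hat C^{\,2}$ it spans the pencil. Substituting this for your step (i), and keeping your step (ii), gives a complete proof; as written, however, the identification of $C^*$ with the $\lambda=0$ fiber is false and the claimed outcome of the elimination is not what that elimination yields.
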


\begin{proof} We can rewrite equation \eqref{eqpencil} using only the parameter $a$ 
 instead
of $a$ and $t$:
\[
 (xy - az^2)(xz-ay^2)(yz-ax^2)
    + \lambda
    (a(x^3 + y^3 + z^3) -(a^3+2)xyz)^2 = 0.
\]
The pencil contains a unique sextic with
nine cusps. Its equation is the following:
\[
 x^6 +  y^6 + z^6 +
 (4a^3 - 2)(x^3y^3 + x^3z^3 + y^3z^3)
 -6a^2(x^4yz + xy^4z + xyz^4)
 -3a(a^3 - 4)x^2y^2z^2 = 0.
\]
Its dual is the plane cubic $C$ of equation in dual coordinates $u,v,w$ is 
\[
 u^3 + v^3 + w^3 - 3auvw = 0 
\]
 (see \cite[3.2.3]{CAG}). The equation of the Caylean of this curve is \cite[(3.27)]{CAG} $$x^3+y^3+z^3-\frac{2+a^3}{a}xyz = 0.$$ Since $t = -\frac{2+a^3}{a}$, we see that the cubic $$x^3+y^3+z^3+txyz = 0$$ coincides with the Caylean of the cubic $C$ and our pencil coincides with the pencil generated by the dual $C^*$ of $C$.
\end{proof}

\begin{remark} We are not writing here the values
of $\lambda\ne 0,\infty$ which correspond to the special
fibers because
these values $\lambda_1,\lambda_2$ are nasty. One can check that their cross ratio $R$ satisfies the equation $R^2-R+1 = 0$ that means that the double cover of $\bbP^1$ branched over $0,\infty,\lambda_1,\lambda_2$ is an elliptic curve with the absolute invariant $j$ equal to zero as it happens in the case of the jacobian fibration.  It is known that the invariant of binary quartics $a_0u^4+a_1u^3v+a_2u^2v^2+a_3uv^3+a_4v^4$ that vanishes on the set of quartics whose zeroes are four points on $\bbP^1$ with such cross-ratio (\emph{equianharmonic quartics}) is equal to $S = a_0a_4-3a_1a_3+4a_2^2$. When we fix the zeroes $0$ and $\infty$ corresponding to vanishing of the coefficients $a_0,a_4$, we see that  the set of unordered pairs $\{\lambda_1,\lambda_2\}\in (\bbP^1)^{(2)}\cong \bbP^2$ is a conic $-3a_1a_3-4a_2^2 = 0$. We have to throw away 4 points from this conic corresponding to $\lambda_1,\lambda_2\in \{0,\infty\}$ or $\lambda_1 = \lambda_2$. This should be compared with Remark \ref{moduli} below where we show that the parameter $t$ of our Halphen pencils of Hesse type take value in $\bbP^1\setminus \{4 \textrm{points}\}$. This shows that the map that assigns to a parameter $t$ the unordered pair $\{\lambda_1,\lambda_2\}$ is bijective.
\end{remark}

The classes of the $(-2)$-curves of the surface 
$S$ expressed in a geometric basis by the 
columns of the following matrix

\beq\label{matrixofconics}
 \begin{pmatrix*}[r]
2&2&2&2&2&2&2&2&2&2&2&2&\\
-1&-1&0&-1&-1&0&-1&-1&0&-1&0&-1&\\
-1&-1&0&-1&0&-1&-1&0&-1&-1&-1&0&\\
-1&-1&0&0&-1&-1&0&-1&-1&0&-1&-1&\\
-1&0&-1&-1&-1&0&-1&0&-1&0&-1&-1&\\
-1&0&-1&-1&0&-1&0&-1&-1&-1&0&-1&\\
-1&0&-1&0&-1&-1&-1&-1&0&-1&-1&0&\\
0&-1&-1&-1&-1&0&0&-1&-1&-1&-1&0&\\
0&-1&-1&-1&0&-1&-1&-1&0&0&-1&-1&\\
0&-1&-1&0&-1&-1&-1&0&-1&-1&0&-1&\\
 \end{pmatrix*}.
\eeq
For example, the first column exhibits a conic passing through the first six points $p_1,\ldots,p_6$. 
The reducible fibers are the sums of columns $(1,2,3), (4,5,6), (7,8,9)$ and $(10,11,12)$.
%In what follows we will adopt the notation
%$C_{ijk}$ to denote the $(-2)$-curve
%which corresponds to the conic 
%passing through all but the points
%$p_i,p_j,p_k$. So, for example,
%the first column of the above matrix
%is the conic $C_{789}$.
The defining polynomials for the above 
$12$ conics are the following:

\begin{align*}
    xy - az^2,\\
    xz - ay^2,\\
    yz - ax^2,\\
    x^2 + (\epsilon a + \epsilon )xy + \epsilon^2 y^2 + (\epsilon^2 a +\epsilon^2 )xz + (a + 1)yz 
        + \epsilon z^2,\\
    x^2 + (\epsilon^2 a +\epsilon^2 )xy + \epsilon y^2 + (\epsilon a + \epsilon )xz + (a + 1)yz + 
    \epsilon^2z^2,\\
    x^2 + (a + 1)xy + y^2 + (a + 1)xz + (a + 1)yz + z^2,\\
    x^2 + (\epsilon a + 1)xy + y^2 + (\epsilon^2 a + \epsilon )xz + (\epsilon^2 a + \epsilon )yz + 
        \epsilon^2 z^2,\\
    x^2 + (\epsilon^2 a + \epsilon )xy + \epsilon^2 y^2 + (\epsilon a + 1)xz + (\epsilon^2 a + 
        \epsilon )yz + z^2,\\
    x^2 + (a +\epsilon^2 )xy + \epsilon y^2 + (a +\epsilon^2 )xz + (\epsilon^2 a + \epsilon )yz + 
        \epsilon z^2,\\
    x^2 + (\epsilon a +\epsilon^2 )xy + \epsilon y^2 + (\epsilon^2 a + 1)xz + (\epsilon a +\epsilon^2 )yz +
        z^2,\\
    x^2 + (a + \epsilon )xy + \epsilon^2 y^2 + (a + \epsilon )xz + (\epsilon a +\epsilon^2 )yz + 
    \epsilon^2z^2,\\
    x^2 + (\epsilon^2 a + 1)xy + y^2 + (\epsilon a +\epsilon^2 )xz + (\epsilon a +\epsilon^2 )yz + 
        \epsilon z^2
\end{align*}

\begin{remark}\label{degChileConf} 
When we choose the parameter $t$ to be $\infty$ or satisfy $t^3+27 = 0$, the cubic curve $\{x^3+y^3+z^3+txyz = 0\}$ becomes the union of three lines. The corresponding cubic equation $X^3+tX+2 = 0$ has two roots, one of them is a double root. In the case of the root of multiplicity $1$, we get a pencil of sextics with the corresponding $9$ distinct base points $p_1,\ldots,p_9$ as above, which lie on the triangle $H_{\lambda:\mu}$. We call the corresponding configuration of $9$ conics and $3$ lines the \emph{degenerated Chilean configuration}. In the construction of the pencil as a torsor of the jacobian fibration, the first parameter corresponds to one choice of a non-trivial $2$-torsion element of the connected component of the Picard scheme of the singular fiber isomorphic to the multiplicative group $\bbG_m$. 

The pencil corresponding to the double root $a$ of $X^3+tX+2 = 0$ is a pencil of elliptic curves with $3$ triple points $x_1,x_2,x_3$ (specialization of the nine $p_i$'s above) and $9$ simple infinitely near points 
$x_i^{(1)}\succ x_i,\ x_i^{(2)}\succ x_i, x_i^{(3)}\succ x_i, i = 1,2,3$, where $\succ$ indicates an infinitely near point of order $1$. For example, if we take $t =-3$, then the multiple root $a$ is equal to $1$ and the pencil is given by equation 
$$(x^2-yz)(z^2-xy)(y^2-xz)+\lambda (x^3 + y^3 + z^3 -3xyz)^2 = 0.$$ Each of the conics in the fiber over $\lambda = 0$ pass through three base points $x_1,x_2,x_3$ and intersect each other transversally at one of the points $(1:0:0), (0:1:0), (0:0:1)$. The base points are also the singular points of the cubic that enters with multiplicity 2. If we apply a quadratic Cremona transformation with fundamental points at the base points, the pencil is transformed to an Halphen pencil of Hesse type of index 1.  In fact, one can prove that the Halphen surfaces of Hesse type from the pencil in \ref{eqpencil} have four degenerations (one for each triangle and corresponding double root $a$) into a singular surface with three singularities, which has an elliptic fibration with four $I_3$ fibers such that at the $3$ vertices of one of them we have cyclic quotient singularities of type $1/4(1,1)$ (i.e. germ at $(0,0)$ of the quotient of $\mathbb C^2$ by $(x,y) \mapsto (ix,iy)$). The construction of the singular surface is the following. Take the jacobian Hesse fibration, and consider one of the four $I_3$ fibers (this is chosen by the degeneration), blow-up the $3$ nodes and contract the $3$ proper transforms from this $I_3$ fiber. Hence we have that Halphen surfaces of Hesse type degenerate into a singular model of their jacobian fibration. This degeneration is in fact a $\mathbb Q$-Gorenstein smoothing, this is, the canonical class of the corresponding $3$-fold is $\mathbb Q$-Cartier.

Roughly speaking, the way one constructs the degeneration is backwards. We take the Hesse surface, and construct the singular surface described above. One can prove that this surface 
has no local-to-global obstructions to deform (e.g. as in \cite[Section 4]{PSU}). In fact, this is also true after contracting three pairs of $(-2)$-curves from the rest of the $I_3$ fibers (one can use \cite[Theorem 4.4]{PSU}). So we do contract them as well, and consider a $\mathbb Q$-Gorenstein degeneration keeping these three $A_2$ singularities and producing a new $A_2$ singularity from the singular $I_3$ fiber. The later is possible because we can reduce that deformation to a deformation of $1/12(1,5)$ which has a $1$ parameter deformation into an $A_2$ singularity (see e.g. \cite[Proposition 2.3]{HP}). It can be proved that the general fiber is a Halphen surface of index $2$ keeping an elliptic fibration from the one in the Hesse surface, and since we have $4$ $A_2$ singularities with not two in one fiber, we have no more room for other Halphen surface than the Halphen surface of Hesse type. Now since we know that there is just one family of them, this corresponds to the Chilean elliptic pencil. This phenomena may be general, it could happen for any jacobian fibration with a $I_n$ fiber: Any Halphen surface on index $m$ can $\mathbb Q$-Gorenstein degenerate to a singular model of its jacobian.

We call the Halphen pencil corresponding to the choice of a reducible fiber $F_t$ and the non-multiple root of $X^3+tX-2 = 0$ the \emph{degenerate Halphen pencil of Hesse type}. It can be obtained from an  Halphen pencil that defines the degenerated Chilean configuration.
\end{remark}

 \begin{remark} \label{moduli}
 We already know from explicit formulas that a Chilean configurations of conics depends on one parameter. We will prove later in Proposition \ref{prop:unique} that there is only one Chilean pencil (up yo projective equivalence) that gives rise to an Halphen surface of index 2  of Hesse type. Thus the moduli space of Chilean configurations (including degenerate ones) is the same as the moduli space of Halphen surfaces of index 2 of Hesse type. To describe it more precisely, we have to invoke the theory of torsors of jacobian elliptic surfaces (see \cite[Chapter 4]{CDL}). In the case when the jacobian elliptic surface $j:J\to \bbP^1$ is rational it gives that the isomorphism class of  a torsor of index $m$ is determined uniquely by the data that consists of points $x_1,\ldots,x_k$ on the base and non-trivial $m_i$-torsion points in the connected group of the Picard variety $\mathbf{Pic}(J_{x_i})$ of the fibers $J_{x_i}$. The corresponding torsor is an elliptic fibration with $m_i$-multiple fibers over the points $x_i$. In the special case when all $m_i$ are equal to $2$, so that the index of the torsor is equal to $2$, one can make it more explicit by considering the Weierstrass model of the jacobian fibration. It defines a degree $2$ map from $J$ to the minimal ruled surface $\bfF_2$ whose branch divisor is the union of the special section $E$ with $E^2 = -2$ and  a curve $B$ disjoint from $E$ from the linear system $|6f+3e|$, where $f$ is the divisor class of the ruling and $e$ is the divisor class of $E$. A choice of a nonsingular point $b_i$ on $B$ defines a point $x_i$ on the base and a non-trivial $2$-torsion element in $\mathbf{Pic}_{J_{x_i}}$. Thus the variety of torsors of index $2$ with fixed number of double fibers is isomorphic to the symmetric product $(B^\sharp)^{(k)}$, where $B^{\sharp}$ is the set of smooth points of $B$. Applying this to our situation we find that the pre-image of $B$ on $J$ is a $3$-section of the elliptic fibration that intersects each singular fiber at one of its singular points. Its image in the plane is a harmonic polar line. Thus the moduli space can be identified with  a harmonic polar line minus four singular points of the singular fibers lying on it.

One can also apply the quadratic twist construction of the torsor. Fix a section and consider the Weierstrass model as above. The image of the harmonic polar line on it is a curve of arithmetic genus 4 with 4 cusps. Choose a twisted cubic $K$ on $\bfF_2$ (i.e an irreducible curve from the linear system $|3f+e|$) that passes through the four cusps and passes through some simple point $b$ of $B$. Since $\dim |3f+e| = 5$, we can do it. The curve $K$ intersects the exceptional section $E$ at one point $b'$.

The pre-image of $K$ on $J$ is a smooth rational $2$-section $R$ that is tangent to a fiber at a point $\tilde{b}$ and passes through the point $\tilde{b}'$ on the proper transform of $E$, the zero section  of $j \colon J\to \bbP^1$.  Let $X \to J$ be the double cover of $J$ ramified over the two fibers at $t= j(\tilde{b})$  and $t' = j(\tilde{b}')$. It follows from the formula for the canonical class of a double cover that its canonical class is trivial. For simplicity, we assume that $J_{t}$ and $J_{t'}$ are smooth. This implies that $X$ is a smooth K3 surface (otherwise we have to replace $X$ by its minimal resolution). The pre-image of $R$ in $X$ splits into two sections $R_+$ and $R_-$ intersecting at two points transversally, one point over $\tilde{b}'$ where it intersects the pre-image $\mathsf{O}$ of the zero section. Now we define an involution $\tau$ on $X$ that is the composition of the deck transformation of the double cover $X\to J$ and  the translation $\iota_{\mathsf{O}}$ by $\mathsf{O}$, where we take $R_+$ as the zero section. We have $\tau$ acts identically on $X_{t'}$ and as a translation by a $2$-torsion point on $X_t$. The quotient is our surface $S$.  
\end{remark}

%%%NEW
\begin{lemma}
\label{eight}
Let $S$ be an Halphen surface of Hesse type and index $2$. Assume that the half-fiber is not one of the four triangles. Then $S$ contains a $(-1)$-curve which intersects eight $(-2)$-curves.
\end{lemma}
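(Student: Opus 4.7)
The plan is to exploit the tropical Riemann--Roch bijection of Proposition~\ref{pro:bij} to repair any $(-1)$-curve one fiber at a time until every reducible fiber is met in two distinct components.

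Since the jacobian fibration is the Hesse surface, the reducible fibers of $f\colon S\to\bbP^1$ are exactly four triangles $F_i = C_{i,1}+C_{i,2}+C_{i,3}$, $i=1,\dots,4$, and the $(-2)$-curves of $S$ are these $12$ components. A rank count (twelve components, three independent relations $C_{i,1}+C_{i,2}+C_{i,3}\sim F$) shows $\Lambda$ has rank $9$, matching $K_S^\perp$; by Proposition~\ref{pro:res} this gives $\Lambda=\ker(\res)$, so both Propositions~\ref{pro:pts} and~\ref{pro:bij} apply. Any $(-1)$-curve $E$ is a $2$-section, so the triple $(E\cdot C_{i,1},E\cdot C_{i,2},E\cdot C_{i,3})$ is a permutation of either $(2,0,0)$ or $(1,1,0)$, and $E$ meets exactly $8$ of the $12$ $(-2)$-curves if and only if the pattern $(1,1,0)$ occurs for every $i$.

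I would start with any $(-1)$-curve $E_0\in\Exc(S)$ (which exists since $S$ can be contracted to $\bbP^2$) and call a fiber $F_i$ \emph{bad for $E_0$} if the pattern there is $(2,0,0)$; after relabelling, $E_0\cdot C_{i,1}=2$. The key step is the following ``local repair'': set $R := -C_{i,2}-C_{i,3}\in\Lambda$. Using the $\widetilde A_2$ intersection matrix ($C_{i,j}^2=-2$, $C_{i,j}\cdot C_{i,k}=1$ for $j\neq k$) one checks that $(E_0+R)\cdot(C_{i,1},C_{i,2},C_{i,3})=(0,1,1)$, while $R$ is orthogonal to every $(-2)$-curve outside $F_i$; hence $(E_0+R)\cdot C\geq 0$ for all $(-2)$-curves, i.e.\ $R\in L^{\rm trop}(E_0)$. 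Since $2E_0\cdot R + R^2 = -2$, Proposition~\ref{pro:bij} yields a $(-1)$-curve $E_1\sim E_0+R+F_0$. Because $F_0$ is disjoint from every $(-2)$-curve, the intersection numbers of $E_1$ with the components of $F_i$ are $(0,1,1)$, while for every other fiber $F_{i'}$ one has $E_1\cdot C_{i',k} = E_0\cdot C_{i',k}$.

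Iterating this procedure, each step strictly decreases the number of bad fibers and creates none, so after at most four steps we obtain a $(-1)$-curve whose intersection pattern is $(1,1,0)$ on every reducible fiber; such a curve meets exactly $8$ of the $12$ $(-2)$-curves, as required. The one point I would expect to be delicate---that a local repair at $F_i$ might spoil the situation at some other $F_{i'}$---dissolves because the corrector $R$ is supported on components of $F_i$ alone and the extra summand $F_0$ is disjoint from every fiber component; the only essential input beyond this bookkeeping is the extremality of $\Lambda$ inside $K_S^\perp$, which makes Proposition~\ref{pro:bij} directly applicable.
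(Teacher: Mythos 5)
Your proposal is correct and is essentially the paper's own argument: the corrector class $-C_{i,2}-C_{i,3}+F_0$ applied one bad fiber at a time is exactly the paper's single-shot correction $E' = E - \sum_{i=1}^k(R_i^{(1)}+R_i^{(2)}) + kF_0$, and in both cases the conclusion that the corrected class is a genuine $(-1)$-curve rests on Lemma~\ref{lem:-1} (which you reach via Proposition~\ref{pro:bij}, whose relevant part is just that lemma). The only cosmetic difference is that your appeal to extremality and $\Lambda=\ker(\res)$ is not actually needed for this step.
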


\begin{proof}
Let $E$ be a $(-1)$-curve of $S$.
If $E$ intersects all the four triangles 
in two sides there is nothing to prove.
Assume this is not the case and let 
$T_1,\dots,T_k$, with $1\leq k\leq 4$
be the triangles which are intersected 
by $E$ at just one side and let $R_i^{(1)},
R_i^{(2)}$ be the two curves in $T_i$ which
are disjoint from $E$. Let
\[
 E' := E - \sum_{i=1}^k (R_i^{(1)}+R_i^{(2)}) + kF_0.
\]
Observe that $E'^2 = E'\cdot K_S = -1$,
so that by Riemann-Roch 
$E'$ is linearly equivalent to an 
effective divisor which, with abuse of notation,
we will denote by the same letter.
Moreover $E'$ has non-negative 
intersection with any of the twelve
$(-2)$-curves of $S$, so, by Lemma \ref{lem:-1}, $E'$
is a $(-1)$-curve of $S$. 
Finally $E'$ intersects each triangle
along two sided as desired.
\end{proof}

%%%NEW

Next theorem is a uniqueness result. It shows that, up to projective equivalence, the family of Chilean configurations from Equation \ref{eqpencil}   
represents all Halphen pencils of Hesse type of index $2$.

\begin{theorem}
\label{unique}
Let $S$ be an Halphen surface of index $2$
whose jacobian is the Hesse surface. Assume that the half-fiber is not one of the four triangles.
Then, $S$ is a Chilean surface, this is, it comes from a Chilean configuration via the pencil in Equation \ref{eqpencil}.
\end{theorem}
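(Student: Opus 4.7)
The plan is to construct an explicit birational morphism $\pi\colon S\to\bbP^2$ whose nine contracted $(-1)$-curves, together with the line class, form a geometric basis in which the four triangular fibers of the elliptic fibration $S\to\bbP^1$ map to triangles of smooth plane conics realizing a Chilean configuration.

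First I would invoke Lemma \ref{eight} to produce a $(-1)$-curve $E_1$ on $S$ meeting exactly two sides of each of the four triangular fibers (the intersection numbers lie in $\{0,1\}$ and sum to $E_1\cdot F=2$ per triangle, so the distribution $2+2+2+2=8$ of Lemma \ref{eight} must be exactly two per triangle). Translating $E_1$ by the nine elements of the Mordell--Weil group $\mathrm{MW}\cong(\bbZ/3\bbZ)^{\oplus 2}$ of the Hesse jacobian fibration, which acts on $S$ via Remark \ref{rem:act}, would yield nine $(-1)$-curves $E_1,\ldots,E_9$. They should be distinct because extremality of the Hesse jacobian gives $\Lambda=\ker(\res)$ by Proposition \ref{pro:res}, so the $\mathrm{MW}$-action on $\mathfrak{pts}_0$ is free by Proposition \ref{pro:pts}; equivariance of the fibration structure and the transitivity of the induced $\bbZ/3\bbZ$-quotient action on the three sides of each triangle then force each $E_i$ to meet exactly two sides of every triangle and, symmetrically, every triangle side to be met by exactly six of the $E_i$.

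The second step is to establish pairwise disjointness $E_i\cdot E_j=0$ for $i\neq j$, so that the nine curves can be simultaneously contracted to give a birational morphism $\pi\colon S\to Y$ with $\rho(Y)=1$ and $K_Y^2=9$, hence $Y\cong\bbP^2$. Freeness of the $\mathrm{MW}$-action on $\mathfrak{pts}_0$ already rules out intersections on $F_0$. For intersection away from $F_0$ I would perform a lattice computation in $\Pic(S)$, writing $E_j\sim E_i+R_{ij}-\tfrac{1}{2}(2E_i\cdot R_{ij}+R_{ij}^2)F_0$ with $R_{ij}\in L^{\rm trop}(E_i)$ as guaranteed by Proposition \ref{pro:bij}, and deducing $E_i\cdot E_j=0$ from the intersection form on $\Lambda$. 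In the geometric basis $(e_0,\ldots,e_9)$ attached to $\pi$, each triangle side $R$ is a $(-2)$-curve with $R\cdot F_0=0$ meeting six of the $E_i$; writing $[R]=de_0-\sum m_ie_i$ with $m_i=R\cdot E_i\in\{0,1\}$, the identities $\sum m_i=6$ and $R\cdot F_0=3d-\sum m_i=0$ force $d=2$, so $\pi(R)$ is a smooth plane conic through six of the nine base points $\pi(E_i)$. The resulting twelve conics and nine points realize the $(12_6,9_8)$ incidence pattern of Section \ref{chileconf}.

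To conclude, by Remark \ref{moduli} the moduli space of Halphen surfaces of Hesse type and index $2$ with non-triangular half-fiber is one-dimensional, as is the family \eqref{eqpencil} of Chilean surfaces; the construction of Section \ref{chileconf} defines a non-constant morphism between these parameter spaces, which is therefore surjective, so $S$ is a Chilean surface. The hard part will be the disjointness step, since $\mathrm{MW}$-equivariance alone does not imply $E_i\cdot E_j=0$ and the argument relies on the full lattice machinery of Section 2; the hypothesis that the half-fiber is not a triangle is essential both to invoke Lemma \ref{eight} and to exclude the degenerate configurations of Remark \ref{degChileConf}.
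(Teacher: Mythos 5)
Your overall strategy --- produce a $(-1)$-curve $E_1$ meeting two sides of each triangle via Lemma~\ref{eight}, generate eight more by the Mordell--Weil action, contract all nine to $\bbP^2$, and read off from $3d=\sum m_i=6$ that the triangle sides become conics --- is the same as the paper's, and that final degree computation is fine once the earlier steps are secured. The genuine gap is exactly where you flag it: the disjointness $E_i\cdot E_j=0$. The mechanism you propose, writing $E_j\sim E_i+R_{ij}-\tfrac{1}{2}(2E_i\cdot R_{ij}+R_{ij}^2)F_0$ with $R_{ij}\in L^{\rm trop}(E_i)\subseteq\Lambda$ via Proposition~\ref{pro:bij}, does not apply here: that proposition only covers $(-1)$-curves $E'$ with $\res(E'-E_i)\in\langle\res(F_0)\rangle$, whereas your nine curves restrict to nine distinct points of $F_0$ differing by nontrivial $3$-torsion translations, so $E_j-E_i$ is a nonzero class in $K_S^\perp/(\Lambda+\langle F_0\rangle)\cong(\bbZ/3\bbZ)^2$ and is \emph{not} an integral combination of $(-2)$-curves and $F_0$. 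The paper's proof is devoted precisely to resolving this: it shows that $E_j-E_i$ must be a sum of fractional classes $\tfrac{1}{3}(R_k^{(a)}-R_k^{(0)})$ supported on exactly three of the four fibers, and uses unimodularity of $K_S^\perp/\langle F_0\rangle\cong{\rm E}_8$ to pin down, up to relabelling, the two generating classes $D_{0111}$ and $D_{1012}$; only then can one compute $D_{0111}\cdot D_{1012}=-1$ and $D\cdot E=1$, whence $E_i\cdot E_j=-1+1+1-1=0$. Without identifying these fractional classes there is no lattice computation to perform, so this is the missing idea rather than a routine verification.

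A secondary gap is the last step. Realizing the abstract $(12_6,9_8)$ incidence pattern is weaker than the theorem's conclusion that $S$ arises from the explicit family~\eqref{eqpencil}. The paper closes this by computing $\res(H)=3\res(E)+\res(F_0)$ and concluding that the nine base points are the $2$-torsion translates of the nine inflection points of $F_0$ in the plane model defined by $H$, which is literally the construction of Section~\ref{chileconf}. Your alternative --- a non-constant morphism between one-dimensional moduli spaces is surjective --- is not valid as stated: by Remark~\ref{moduli} both parameter spaces are affine curves (a harmonic polar line minus four points), and a non-constant morphism of affine curves can omit finitely many points of the target, so you would need an additional properness or image-identification argument to conclude.
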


\begin{proof}
Let $E$ be a $(-1)$-curve as in 
as in Lemma~\ref{eight}, let 
$R_i^{(1)}, R_i^{(2)}$ be the two sides 
of the triangle $T_i$ which are
intersected by $E$ and let $R_i^{(0)}$
the remaining side.
Observe that we are free to (re)label 
the four singular fibers as well as the
components in each such fiber as we 
wish.
Let $\Lambda\subseteq
K_S^\perp$ be the sublattice spanned 
by the classes of the $(-2)$-curves, as before.
Let $\sigma\in\Aut(S)$ be such that
$\sigma(E)\neq E$. Observe that $\sigma(E)$
and $E$ does not have the same intersection
product with any $(-2)$-curve of $S$. Indeed,
if this were the case, the class of
$\sigma(E)-E$ would be proportional to
$F_0$, but the only divisor of the form $E+nF_0$ with self-intersection $-1$ is
$E$, a contradiction.
The $(-2)$-classes
generate $K_S^\perp$ over the rationals
and $\sigma(E)-E$ belongs to this space,
so that there exists rational numbers $a_{ij}$
such that
\[
 \sigma(E)-E \sim
 \sum_{i=1}^4 (a_{i0}R_i^{(0)} + a_{i1}R_i^{(1)} + a_{i2}R_i^{(2)}).
\]
Let $i\in\{1,2,3,4\}$ be the index
of a singular fiber where $E$ and 
$\sigma(E)$ have different intersection
numbers with the three irreducible
components.
After possibly switching the labels of 
$R_i^{(1)}$ and $R_i^{(2)}$ we can assume
$\sigma(E)\cdot R_i^{(0)}=1$, $\sigma(E)\cdot R_i^{(1)}=0$ and 
$\sigma(E)\cdot R_i^{(2)}=1$. These equations
imply $a_{i0}=-\frac{1}{3}+n, 
a_{i1}=\frac{1}{3}+n$ and $a_{i2}=n$,
where $n\in\mathbb Z$. In other words
the class of the $\mathbb Q$-divisor
\[
 \frac{1}{3}(R_i^{(1)}-R_i^{(0)})
\]
must appear in $\sigma(E)-E$.
Since the above divisor has self-intersection
$-2/3$ the difference $\sigma(E)-E$ must contain 
exactly the sum of three such divisors, whose
self-intersection is $-2$. 
Moreover we can assume without loss of 
generality that the sum is supported
at the last three singular fiber, so that
it is
\[
 D_{0111} := 
 \frac{1}{3}(R_2^{(1)}-R_2^{(0)}+
 R_3^{(1)}-R_3^{(0)}+R_4^{(1)}-R_4^{(0)}).
\]
Here the notation $D_{0111}$ means 
that the divisor is supported at the 
first three singular fibers and in each
such fiber we are taking $R_i^{(1)}-R_i^{(0)}$
instead of $R_i^{(2)}-R_i^{(0)}$.
We showed that the class of $D_{0111}$ is in $\Pic(S)$ but the sublattice
$\Lambda+\langle D_{0111},F_0\rangle/\langle F_0\rangle
\subseteq K_S/\langle F_0\rangle$
is still not unimodular, so that
another class has to be added to get
the full lattice $K_S$. Acting with another automorphism $\tau$ which comes
from the jacobian action we get a
$(-1)$-curve $\tau(E)$ distinct from
$E$ and $\sigma(E)$.
Reasoning as above with $\tau(E)-E$ one
concludes that the new class to be
added is again the sum of three 
divisors of type
$\frac{1}{3}(R_i^{(a)}-R_i^{(0)})$, where 
$a\in\{1,2\}$. Moreover the new class must have integer intersection product
with $D_{0111}$.
Since $\frac{1}{3}(R_i^{(1)}-R_i^{(0)})\cdot 
\frac{1}{3}(R_i^{(2)}-R_i^{(0)}) = -\frac{1}{3}$,
the only possibilities for the new
class are $D_{0222}$, $D_{a012}$, 
$D_{a021}$, $D_{a210}$, $D_{a120}$,
$D_{a201}$, $D_{a102}$, where $a\in\{1,2\}$. 
The first possibility cannot occurr
because $D_{0222}\sim
2F_0-R_2^{(0)}-R_3^{(0)}-R_4^{(0)}-D_{0111}$ is
already in $\Lambda+\langle
F_0,D_{0111}\rangle$.
Without loss of generality we can 
relabel the last three fibers and 
relabel the components in the first 
fiber because these operations leave
$D_{0111}$ unaltered. In this way
we can assume the second divisor to be
\[
 D_{1012} :=
 \frac{1}{3}(R_1^{(2)}-R_1^{(0)}+
 R_3^{(1)}-R_3^{(0)}+R_4^2-R_4^{(0)}).
\]
Observe that $D_{0111}\cdot D_{1012}
= -1$ and each of the two divisors
has intersection $1$ with $E$.
The class of each of the following divisors
is in the Picard group because it is a linear
combination with integer coefficients of 
$(-2)$-curves, $F_0$, $E$, $D_{0111}$ and 
$D_{1201}$
\[
 \arraycolsep=1cm
 \begin{array}{lll}
  E_1 := E & E_2 \sim E+D_{0222} 
  & E_3 \sim E+D_{0111}\\
  E_4 \sim E+D_{2021} & E_5 \sim E+D_{2210} 
  & E_6 \sim E+D_{2102}\\
  E_7 \sim E+D_{1012} & E_8 \sim E+D_{1201} 
  & E_9 \sim E+D_{1120}.
 \end{array}
\]
Each such divisor has non negative
intersection with any $(-2)$-curve, it has
self-intersection $-1$ and intersection $-1$
with $K_S$, so it is linearly equivalent to a
$(-1)$-curve by Lemma~\ref{lem:-1}.
Moreover $E_i\cdot E_j = 0$ for any
$i\neq j$.
The orthogonal complement of the classes
of these nine $(-1)$-curves is generated by
the class of 
\[
 H := 3E + 3F_0 -\sum_{i=1}^4R_i^{(0)},
\]
which has self-intersection $1$ and
intersection product $2$ with any 
$(-2)$-curve of the surface.
Thus all the $(-2)$-curves are conics 
in the plane model defined by $H$.
To conclude we recall that, according 
to Proposition~\ref{pro:pts}, the subset
$\mathfrak{pts}_0\subseteq F_0$ of 
points cut out by the $(-1)$-curves of
$S$ is acted freely and transitively 
by $K_S^\perp/\Lambda\simeq
\mathbb Z/3\mathbb Z\oplus
\mathbb Z/6\mathbb Z$.
Of these $18$ points we already 
know the nine base points 
$p_1,\dots,p_9$ cut out by 
the above disjoint exceptional
curves of $S$. Moreover 
$\res(H) = 3\res(E) + \res(F_0)$,
because $\res(F_0)$ has order two,
so that $x_i \sim p_i+\res(F_0)$
is an inflection point of the plane
cubic $F_0$ in the model defined
by $H$.
This is exactly how the Chilean  configuration 
constructed in Section~\ref{chileconf}.
\end{proof}

We will give another proof of the uniqueness of the Chilean pencil in Proposition \ref{prop:unique1}.

\begin{remark}
The set of nine $(-1)$-curves 
constructed in Theorem~\ref{unique}
is invariant for any automorphism 
of the surface. Indeed let $E$ be the
$(-1)$-curve in the proof of the 
theorem. Any automorphism maps $E$ to
a $(-1)$-curve $E'$ which intersects 
any singular fiber at two curves.
Then, by the same argument in the proof
of the theorem the difference $E'-E$
must be linearly equivalent to one of the divisors $D_{0222},D_{0111},
D_{2021},D_{2210},D_{2102},D_{1012},
D_{1201},D_{1120}$.
\end{remark}

\begin{remark}
It follows from the Borel-de Siebenthal-Dynkin algorithm for embedding of root lattices that, in our case, the embedding  $\Lambda_0 := \Lambda/\langle F_0\rangle \hookrightarrow K_S^\perp/\la K_S\ra \cong {\rm E}_8$ is  unique modulo the action of 
the Weyl group $W({\rm E}_8)$.
By~\cite[Lemma 2.5]{LT} the
embeddings of $\Lambda$ into 
$K_S^\perp\simeq\tilde {\rm E}_8$ 
which are compatible
with the embedding 
$\Lambda_0\to{\rm E}_8$
are in bijection with the elements of the
group ${\rm Ext}^1_{\mathbb Z}
({\rm E}_8/\Lambda_0,\mathbb Z/2\mathbb Z)
\simeq {\rm Ext}^1_{\mathbb Z}
((\mathbb Z/3\mathbb Z)^2,\mathbb Z/2\mathbb Z)$,
which is trivial. This provides another
proof of the fact that the classes of 
$D_{0111}$ and $D_{1012}$ are in $\Pic(S)$.
\end{remark}

%%%NEW

The base points of the Halphen pencil in \ref{eqpencil} define nine disjoint $(-1)$-curves on $S$. However, there are more $(-1)$-curves on Halphen surfaces of higher index.  

\begin{proposition}\label{minus1} The number of $(-1)$-curves on the rational elliptic surface corresponding to the Halphen pencil of Hesse type of index $2$ in \ref{eqpencil} is equal to $144$. The set of $(-1)$-curves consists of the following subsets:

\begin{itemize} 
\item $9$ exceptional curves,
\item  $36$  proper transforms of  lines through two base points, 
\item $54$ proper transforms of conics through five base points, 
\item $36$ proper transforms of cubics with a double point at one of the base points and six simple base points, 
\item proper transforms of  $9$ quartics with one triple point and passing through other 8 points.  
\end{itemize} 

The set of these $2$-sections is freely acted by the Mordell-Weil group of the jacobian fibrations isomorphic to $(\bbZ/3\bbZ)^{\oplus 2}$ with 16 orbits represented by the columns of the following matrices:

\[
 \begin{pmatrix*}[r]
0 & 2 & 2 & 2 & 2 & 2 & 2 & 4\\
0 & -1 & -1 & -1 & 0 & 0 & 0 & -1\\
0 & -1 & 0 & 0 & -1 & -1 & 0 & -1\\
0 & 0 & -1 & 0 & -1 & 0 & -1 & -1\\
0 & -1 & 0 & 0 & -1 & -1 & 0 & -1\\
0 & -1 & -1 & -1 & 0 & 0 & 0 & -1\\
0 & 0 & -1 & 0 & -1 & 0 & -1 & -1\\
0 & 0 & 0 & -1 & 0 & -1 & -1 & -1\\
0 & 0 & 0 & -1 & 0 & -1 & -1 & -1\\
1 & -1 & -1 & -1 & -1 & -1 & -1 & -3\\
\end{pmatrix*}
\qquad
 \begin{pmatrix*}[r]
1 & 1 & 1 & 1 & 3 & 3 & 3 & 3\\
-1 & 0 & 0 & 0 & -1 & -1 & -1 & 0\\
0 & -1 & 0 & 0 & -1 & -1 & 0 & -1\\
0 & 0 & -1 & 0 & -1 & 0 & -1 & -1\\
0 & -1 & 0 & 0 & -1 & -1 & 0 & -1\\
-1 & 0 & 0 & 0 & -1 & -1 & -1 & 0\\
0 & 0 & -1 & 0 & -1 & 0 & -1 & -1\\
0 & 0 & 0 & -1 & 0 & -1 & -1 & -1\\
0 & 0 & 0 & -1 & 0 & -1 & -1 & -1\\
0 & 0 & 0 & 0 & -2 & -2 & -2 & -2\\
\end{pmatrix*}
\]
The curves in the first matrix lie
restrict to the point $p_9\in F_0$ 
while the curves in the second matrix
restrict to the inflection point $x_9\in F_0$.

%\[
%\begin{pmatrix*}[r]
%0&1&1&1&1&2&2&2&2&2&2&3&3&3&3&4&\\
%0&-1&-1&-1&-1&-1&-1&-1&-1&-1&-1&-2&-2&-2&-2&-3&\\
%0&-1&0&0&0&-1&-1&-1&-1&-1&-1&-1&-1&-1&0&-1&\\
%0&0&0&0&0&-1&-1&-1&0&0&0&-1&-1&-1&0&-1&\\
%0&0&-1&0&0&-1&-1&-1&-1&-1&-1&-1&-1&0&-1&-1&\\
%0&0&0&-1&0&-1&0&0&-1&-1&0&-1&0&-1&-1&-1&\\
%0&0&0&0&-1&0&0&0&0&0&-1&0&-1&-1&-1&-1&\\
%0&0&0&0&0&0&0&0&-1&0&0&-1&0&-1&-1&-1&\\
%0&0&0&0&0&0&-1&0&0&0&0&0&-1&-1&-1&-1&\\
%1&0&0&0&0&0&0&-1&0&-1&-1&-1&-1&0&-1&-1&\\
%\end{pmatrix*}
%\]
\end{proposition}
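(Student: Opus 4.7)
The strategy is to apply the enumeration machinery of Section~2. Since the four triangular fibers make $\Lambda_0 = \Lambda/\langle F_0\rangle$ the root lattice $4A_2$ of rank $8$, the fibration is extremal and Proposition~\ref{pro:res} forces $\Lambda=\ker(\res)$. From the short exact sequence
\[
 0 \to \langle K_S\rangle/(\langle K_S\rangle\cap\Lambda) \to K_S^\perp/\Lambda \to K_S^\perp/(\Lambda+\langle F_0\rangle) \to 0,
\]
whose left term is $\mathbb Z/2$ (since $2K_S\sim -F\in\Lambda$ but $K_S\notin\Lambda$) and whose right term is the Mordell--Weil group of the Hesse jacobian fibration, identified with $E_8/4A_2\cong(\mathbb Z/3\mathbb Z)^{\oplus 2}$, we get $|K_S^\perp/\Lambda|=18$. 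By Proposition~\ref{pro:pts}, $\mathfrak{pts}_0$ consists of $18$ points, and by Remark~\ref{rem:act} the Mordell--Weil group partitions them into the $m=2$ orbits of size $9$: the orbit of $p_9$ (containing all nine base points $p_i$) and the orbit of the inflection point $x_9 = p_9 + \tau$ (containing the nine translates $x_i$).

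For each of the two orbits I would fix a $(-1)$-curve restricting to the chosen point: $E = \calE_9$ for the $p_9$-orbit, and for the $x_9$-orbit the strict transform of a suitable line through two of the base points (which restricts to $x_9$ by the collinearity condition on $F_0$). Proposition~\ref{pro:bij} provides a bijection between $L^{\text{trop}}(E)/\langle 2K_S\rangle$ and the set of $(-1)$-curves restricting to that point. Writing a general $R\in\Lambda$ as an integer combination $\sum a_{ij} R_i^{(j)}$ of the twelve triangle components listed in \eqref{matrixofconics}, the defining inequalities $(E+R)\cdot R_i^{(j)}\ge 0$ cut out a compact polytope (Proposition~\ref{pro:finite}); a direct enumeration of the resulting integer points modulo $\langle 2K_S\rangle$ produces exactly the eight classes displayed in each matrix. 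Since the Mordell--Weil action on $\mathfrak{pts}_0$ is free by Proposition~\ref{pro:pts} and $\res$ is equivariant, the induced action on $\Exc(S)$ is also free, and the total count is $2\cdot 8\cdot 9 = 144$.

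The geometric type of each $(-1)$-curve is read off its column: a class $d e_0 - \sum m_i e_i$ is the proper transform of a plane curve of degree $d$ passing through $p_i$ with multiplicity $m_i$. The eight columns in the $p_9$-matrix split as one exceptional curve ($d=0$), six conics through five base points ($d=2$), and one quartic with a triple point at $p_9$ and simple points at $p_1,\ldots,p_8$ ($d=4$); the eight in the $x_9$-matrix split as four lines through pairs of base points ($d=1$) and four cubics with a double point at $p_9$ and six simple base points ($d=3$). Multiplying each of the $16$ Mordell--Weil orbits by the orbit size $9$ gives the enumeration $9+36+54+36+9=144$ claimed in the five items of the proposition.

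The principal obstacle is the explicit enumeration of $L^{\text{trop}}(E)/\langle 2K_S\rangle$: although finiteness is automatic from Proposition~\ref{pro:finite}, establishing the precise count of eight requires listing integer points of an eight-dimensional polytope cut out by twelve linear inequalities, exploiting the symmetric structure of the embedding $4A_2\hookrightarrow E_8$. A useful sanity check is the comparison $|\Exc(S)|/|\textup{MW}| = 144/9 = 16 = 2\cdot 8$, matching the total number of orbit representatives recorded in the two matrices.
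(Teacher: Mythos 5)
Your overall strategy is the same as the paper's: identify $\mathfrak{pts}_0$ as the $18$ points $\{p_1,\dots,p_9\}\cup\{x_1,\dots,x_9\}$ forming two Mordell--Weil orbits, reduce the count over each point to $L^{\rm trop}(E)/\langle 2K_S\rangle$ via Proposition~\ref{pro:bij}, and multiply out. Your index computation $[K_S^\perp:\Lambda]=18$ and your reading of the geometric types from the columns are correct. But there is a genuine gap at exactly the place you flag as ``the principal obstacle'': you assert that the enumeration of $L^{\rm trop}(E)/\langle 2K_S\rangle$ ``produces exactly the eight classes displayed in each matrix'' without carrying it out, and you frame it as listing integer points of an eight-dimensional polytope cut by twelve inequalities, which is not something you then do. The paper's proof hinges on the observation that this computation \emph{decouples fiber by fiber}: since components of distinct triangles are mutually orthogonal, $R\in L^{\rm trop}(E)$ if and only if its piece supported on each triangle $T_i$ separately satisfies the three inequalities there. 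Working modulo $R_i^{(0)}+R_i^{(1)}+R_i^{(2)}\sim 2F_0$ one may set $a_2=0$, and the three inequalities $-2a_0+a_1\geq 0$, $a_0-2a_1\geq -1$, $a_0+a_1\geq -1$ cut out a triangle in the plane whose only integer points are $(0,0)$ and $(-1,0)$, i.e.\ the contributions $0$ and $-R_i^{(0)}$. Hence $L^{\rm trop}(E)/\langle 2F_0\rangle$ consists exactly of the $2^4=16$ classes $E-\sum_{i\in I}R_i^{(0)}+|I|F_0$, $I\subseteq\{1,2,3,4\}$, which separate by parity of $|I|$ into the $8$ hitting $p_9$ and the $8$ hitting $x_9$. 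This per-fiber reduction is the missing idea; without it your count of eight per point is unproved.

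A secondary imprecision: Proposition~\ref{pro:bij} puts $L^{\rm trop}(E)/\langle 2K_S\rangle$ in bijection with the $(-1)$-curves whose restriction differs from $\res(E)$ by a multiple of $\res(F_0)$, i.e.\ with the curves through the \emph{pair} $\{p_9,x_9\}$ (sixteen of them), not with those through a single chosen point (eight). Your final arithmetic $2\cdot 8\cdot 9=16\cdot 9=144$ is unaffected, and you do not actually need a second base curve for the $x_9$-orbit --- the single curve $E=\calE_9$ already accounts for both matrices, with the odd-cardinality subsets $I$ producing the curves through the flex $x_9$.
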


\begin{proof}
Recall that by Proposition~\ref{pro:bij}
the subset of $\Exc(S)$ which intersects
$F_0$ at $p$ or $p+\res(F_0)$ is in 
bijection with $L^{\rm trop}(E)/
\langle 2K_S\rangle$, where $E$ is 
any such $(-1)$-curve.
We know by Section 2 that this set is 
the set of integer points of a polytope
in $\Lambda_{\mathbb Q}/
\langle 2K_S\rangle$ which we are now
going to describe.
According to Proposition~\ref{pro:pts}
the restriction map induces a surjection
\[
 \Exc(S)\to\mathfrak{pts}_0
\]
which is equivariant with respect to the
free action of $K_S^\perp/\Lambda
+\langle F_0\rangle\simeq (\mathbb Z/3\mathbb Z)^2$.
The set $\mathfrak{pts}_0$ consists of two
orbits under this action: the set of flexes 
$\{x_1,\dots,x_9\}$ and their translates
with respect to the $2$-torsion class
$\{p_1,\dots,p_9\}$ which are cut out
by the exceptional divisors of the 
Chilean model.
Denote by $E$ one such exceptional
divisor, by $R_i^{(1)},R_i^{(2)}$ the two sides of 
the $i$-triangle which are intersected by
$E$ and by $R_i^{(0)}$ the side which is not
intersected. Then, up to multiples of
$2F_0 \sim R_i^{(0)}+R_i^{(1)}+R_i^{(2)}$, 
the only linear combination
$D := E+a_0R_i^{(0)}+a_1R_i^{(1)}+a_2R_i^{(2)}$ which
has non-negative intersection with 
all the three $(-2)$-curves is 
$E-R_i^{(0)}$.
To prove this observe that, after 
possibly adding an integer multiple 
of $2F_0$ we can assume $a_2 = 0$.
Then the three inequalities 
$D\cdot R_i^{(0)}\geq 0$, 
$D\cdot R_i^{(1)}\geq -1$, 
$D\cdot R_i^{(2)}\geq -1$
become 
$-2a_0+a_1\geq 0$,
$a_0-2a_1\geq -1$,
$a_0+a_1\geq -1$,
whose solution set is the 
triangle in the below picture.

\begin{center}
\begin{tikzpicture}[scale=1]
\tkzDefPoint(0,0){O}
\tkzDefPoint(-1,0){A}
\tkzDefPoint(-1/3,-2/3){B}
\tkzDefPoint(1/3,2/3){C}
\tkzFillPolygon[color = cyan!20](A,B,C)
\tkzDrawPoints[fill=black,color=black,size=7](A,O)
\tkzInit[xmin=-2,xmax=1,ymin=-2,ymax=1]
\tkzGrid
\tkzDrawXY[noticks,>=latex]
\tkzDrawSegments[add = .3 and .3,color=blue](A,B B,C C,A)
\tkzLabelPoint[above left](A){\tiny $(-1,0)$}
\tkzLabelPoint[below](B){\tiny $(-\frac{1}{3},-\frac{2}{3})$}
\tkzLabelPoint[right](C){\tiny $(\frac{1}{3},\frac{2}{3})$}
\end{tikzpicture}
\end{center}
The only integer points in the 
triangle are $(0,0)$ and $(-1,0)$,
which correspond to the divisors
$E$ and $E-R_i^{(0)}$ respectively.
Thus a set of representatives for 
$L^{\rm trop}(E)/\langle 2F_0\rangle$
consists of all the sums $-\sum_IR_i^{(0)}$,
where $I$ is any subset of $\{1,\dots,4\}$.
As a consequence we can write down
explicitly all the curves which intersect
$F_0$ at $x_9$ or at $p_9$, these are the
following $16$ curves:
\[
 E - \sum_{i\in I}R_i^{(0)} + |I|F_0,
\]
where $I$ varies along the subsets of
$\{1,2,3,4\}$ and $|I|$ is its cardinality.
Observe that in our case we can take 
$E = E_9$, so that $E\cap F_0 = \{p_9\}$.
When $I$ has even cardinality we get the 
first set of eight curves which intersect 
$F_0$ at $p_9$.
When $I$ has odd cardinality we get the 
second set of eight curves which intersect 
$F_0$ at the flex $x_9$.
By Proposition~\ref{pro:pts} we deduce
that the number of $(-1)$-curves of $S$ 
is $8\cdot 18 = 144$.
To describe the remaining $(-1)$-curves
one can use the action of the group of
$\eta$-rational points of $\Pic^0(F_\eta)$.
This group, isomorphic to $K_S^\perp/
\Lambda+\langle 2F_0\rangle\simeq
(\mathbb Z/3\mathbb Z)^2$ acts by
translations by $3$-torsion classes 
on $F_0$. Computing the action on 
$\{p_1,\dots,p_9\}$ the nine points 
are permuted by the elements of the 
following group
\[
 \langle
 (1 9 5)(2 7 6)(3 8 4),
 (1 8 6)(2 9 4)(3 7 5)
 \rangle.
\]
The statement follows.
\end{proof}

\begin{remark}
The set $\Exc(S)$ is equipped with the
following non-geometric involution which
preserves each fiber of the map 
$\res\colon\Exc(S)\to\mathfrak{pts}_0$.
The involution is
\[
  E - \sum_{i\in I}R_i^{(0)} + |I|F_0
  \mapsto
  E - \sum_{i\in I^c}R_i^{(0)} + |I^c|F_0,
\]
where $I^c$ is the complement of $I$
in $\{1,2,3,4\}$. Observe that the two
curves have intersection number 
$-1+|I|+|I^c| = 3$ and their sum is the
divisor $2E-\sum_{i=1}^4R_i^{(0)}+4F_0$.
\end{remark}

\begin{remark}
To determine the classes of the 
$(-1)$-curves of $X$ one can proceed
as in~\cite{LT} (see also~\cite{HM}).
The classes of the $(-2)$-curves 
span a sublattice $\Lambda\subseteq K_S^\perp$ 
of rank $9$ of $\textrm{Pic}(S)$. Thus
using the perfect pairing given by the 
intersection form on the Picard group
we deduce that the polyhedra 
\[
 \Delta_S := \textrm{chull}
 (E\in\textrm{Pic}(S)\, :\, E\cdot K_S = -1
 \text{ and } E\cdot C\geq 0\text{ for any
 $(-2)$-curve $C$})
\]
has dimension $9$ and it decomposes 
as the following Minkowski sum $
 \Delta_S = \Delta + \mathbb Q\cdot K_S$,
where $\Delta$ is a polytope of dimension
$8$. It turns out that the $(-1)$-curves of
$S$ are in bijection with the integer points
of $\Delta$. Indeed clearly a $(-1)$-curve 
of $S$ has integer intersection with all
the $(-2)$-curves and thus it gives an
integer point of $\Delta$. On the other
hand an integer point of $\Delta$ can be
lifted to an affine line contained in $\Delta_S$.
This line has the form $E+nK_S$ and there
is only one integer value of $n$ such that
the self-intersection of $E+nK_S$ is $-1$.
By Riemann-Roch the class is effective
and since it has non-negative intersection 
with all the $(-2)$-curves of $S$ one concludes
that it is irreducible. 
One checks with a computer that 
$\Delta$ has $144$ integer points.
\end{remark}

In the next section we will give a proof of Proposition \ref{minus1} that does not rely on any computer computation. 

\begin{remark} Since all nine exceptional curves of $\pi:S\to \bbP^2$ form one orbit with respect to the Mordell-Weyl group of the jacobian fibration, the restriction of $f:S\to \bbP^1$ to each of them is a double cover of the base $\bbP^1$ branched over the same pair of points. The fiber of $f$ over one of the points is the  double fiber and the image under $\pi$ of the fiber over the other point is a $9$-cuspidal sextic which we discussed in Proposition \ref{dualcubic}.
\end{remark}

%%%%%%%%%%%%%%%%%%%%%%%%%%%%%%%%%%%%%%%%
\section{A double plane model of $S$}

Let $S$ be an Halphen surface of index $2$ of Hesse type. We do not assume that it comes from a Chilean configuration. We denote the half-fiber by $F_0$ and the general fiber by $F$. It has also four reducible fibers $F_1,\ldots,F_4$ of type $I_3$. Fix a $(-1)$-curve $E_0$ on $S$ and let $\sigma_{E_0} \colon S\to S'$ be its blow-down morphism. The surface $S'$ is a \emph{weak del Pezzo surface} of degree 1. For example, if $E_0$ is chosen to be the exceptional curve over one of the base points of the Halphen pencil, then $S'$ is the blow-up of the remaining base points. It is well known and well documented
~\cite[\S 8.8.2]{CAG}
that the anti-canonical linear system $|-K_{S'}|$ is a pencil with one base point $s'$ and the anti-bicanonical linear system $|-2K_{S'}|$ defines a degree 2 map 
$$\phi'\colon S'\da Q\subset \bbP^3,$$
 where $Q$ is a singular quadric with vertex $v_0 = \phi'(s')$. The deck  transformation of $\phi'$ defines a biregular involution $\beta$, known as the \emph{Bertini involution} of $S'$. 

The branch curve $B'$ of $\phi'$ is cut out by a cubic surface in $\bbP^3$. If we project $Q$ to $\bbP^2$ from a point $q\not\in B'$, we obtain a \emph{double plane model} of $S'$ with branch curve a plane sextic $W$ with a triple point $q_1$ and a triple infinitely near point $q_1^{(1)}\succ q_1$. The image of $q$ is a line $L$ that intersects $W$ at $q_1$ with multiplicity $6$. However, if we project from a point $q\in B'$, then the branch curve $W$ of the double plane model is the union of a quintic curve $W_0$ with double points at $q_1,q_1^{(1)}$ and the line $L$ that intersects $W_0$ at $q_1$ with multiplicity 4 and intersects $W_0$ with multiplicity 1 at some point $q_2$.
 
We denote the composition $S \to S' \da Q \da \bbP^2$ by $\phi \colon S \da \bbP^2$. It defines a \emph{double plane model} of $S$.

Since $K_S = \sigma_{E_0}^*(K_{S'})+E_0$ and $|-K_{S}| = \{F_0\}$, we see that the point $s'$ lies on the half-fiber $F_0$. Let $s = E_0\cap F_0$. Since the birational lift of the Bertini involution $\beta$ leaves invariant $K_S$ and $\sigma_{E_0}^*(K_{S'})$, it leaves invariant $E_0$ and hence fixes the point $s$. Thus the image $q \in Q$ of $s$ belongs to $B'$. Then we project from $q$ to obtain that the branch curve $W$ of $\phi$ is the union of a plane quintic $W_0$ and the line $L$ as above.

The map $\phi$ is given by the linear system $|-2K_S+E_0|$. It blows down $E_0$ to the point $q_2$ and the image of $s'$ is the point $q_1$. It also blows down the irreducible components of the fibers $F_1,\ldots,F_4$ to singular points of $W_0$. If $E_0$ intersects two components of $F_i$, it blows down the remaining component to an ordinary node  of $W_0$. If $E_0$ intersects one component with multiplicity 2, then it blows down the remaining two components to an ordinary cusp. Thus $W_0$ has four additional double points $y_1,\ldots,y_4$. 

The map $\phi$ is a rational map of degree 2 with indeterminacy point $s$. Let $\sigma_{s,s'}\colon \tilde{S} \to S$ be the blow-up of the points $s,s'$ and $\tau \colon X \to \bbP^2$ be the blow-up of the points $q_1,q_1^{(1)},q_2,y_1,\ldots,y_4$. We have a commutative diagram
\[
\xymatrix{\tilde{S}\ar[d]^{\sigma_{s,s'}}\ar[r]^{\tilde{\phi}}&X\ar[d]^{\tau}\\
S\ar@{-->}[r]^{\phi}&\bbP^2,}
\]
where the top arrow is a finite map of degree 2. The lift $\beta_{E_0}$ of the Bertini involution generates the Galois group $G$ of the map $\tilde{\phi}$. The locus of fixed point $\tilde{S}^{\beta_{E_0}}$ consists of the union of the exceptional curves $\calE_{s'}, \calE_{s}$ and a smooth curve $B$. The map $\tilde{\phi}\circ \tau:\tilde{S}\to \bbP^2$ is given by the linear system $|F+\bar{E}_0|$, where we identify the divisor class of a general fiber $F$ of $f$ with its pull-back on $X$ and use bar to denote the proper transform.

Let $\calE_{q_1} =  \tau^{*}(q_1)$ be the exceptional configuration over $q_1$. It is equal to the sum of the exceptional curve $\calE_{q_1^{(1)}}$ over $q_1^{(1)}$ and a $(-2)$-curve $R_{q_1}$. We have 
\begin{eqnarray*}
\tilde{\phi}^*(R_{q_1}) &=& 2\calE_{s'},\\
\tilde{\phi}^*(\bar{L}) &=& 2\calE_s,\\
\tilde{\phi}^*(\calE_{q_1^{(1)}}) &=& \bar{F}_0,\\
\tilde{\phi}^*(\bar{W}_0) &=& 2 B.
\end{eqnarray*}
The elliptic pencil on $S$ is equal to the pre-image of the pencil of lines through $q_2$. The double fiber is clearly mapped to the line $L$.
The following picture  summarizes the  construction of the double plane model of $S$.

\xy (-10,25)*{};
%Picture on S
@={(15,0),(15,15)}@@{*{\bullet}};
(0,0)*{};(20,0)*{}**\dir{-};(15,-5)*{};(15,18)*{}**\dir{-};
(18,5)*{F_0};(3,2)*{E_0};(17,-2)*{s};(18,18)*{s'};
% Picture on \tilde{S}
(30,-2)*{};(53,-2)*{}**\dir{-};(45,2)*{};(45,18)*{}**\dir{-};(30,15)*{};(50,15)*{}**{\color{red}\dir{-}};
(35,-5)*{};(50,8)*{}**{\color{red}\dir{-}};
(52,0)*{\bar{E}_0};(45,20)*{\bar{F}_0};(53,8)*{\mathcal{E}_s};(37,17)*{\mathcal{E}_{s'}};
(27,-8)*{\longleftarrow};(27,-5)*{\sigma_{s,s'}};
(15,-8)*{S};(45,-8)*{\tilde{S}};
%curve
(1,-6)*+{};(8,12)*+{}
**\crv{(4,-3)&(7,2)&(15,5)&(18,8)&};
(7,11)*{\bar{B}};
%curve
(31,-6)*+{};(38,14)*+{}
**\crv{(34,-3)&(37,2)&(45,7)&(48,10)&};
(42,10)*{\bar{B}};
%Picture on X
(70,-5)*{};(70,18)*{}**\dir{-};(65,15)*{};(85,15)*{}**{\color{red}\dir{-}};(65,5)*{};(90,5)*{}**{\color{red}\dir{-}};
(83,-5)*{};(83,10)*{}**\dir{-};
(75,2)*+{};(90,-3)*+{}
**\crv{(65,2)&(70,-3)&(75,-3)&(80,-3)&};
(75,-8)*{X};(57,-5)*{\tilde{\phi}};(57,-8)*{\longrightarrow};
(86,10)*{\mathcal{E}_{q_2}};(93,-5)*{\bar{W}_0};(80,17)*{R_{q_1}};(93,5)*{\bar{L}};
(72,21)*{\mathcal{E}_{q_1^{(1)}}};
%Picture on P^2
(110,1.5)*{};(130,1.5)*{}**{\color{red}\dir{-}};
%curve
(110,-5)*+{};(127,7)*+{}
**\crv{(110,0)&(115,5)&(120,-5)};
%curve
(110,5)*+{};(120,5)*+{}
**\crv{(114,0)&};
(132,1.5)*{L};(132,1.5)*{L};(115,-1)*{q_1};(125,-1)*{q_2};(114,1.5)*{\bullet};(123.5,1.5)*{\bullet};
(120,-8)*{\mathbb{P}^2};(105,-8)*{\longrightarrow};(105,-8)*{\longrightarrow};(105,-5)*{\tau};
(128,8)*{W_0};
\endxy

\begin{remark} If we blow-up the point $s'$ on $S'$ we obtain a jacobian rational elliptic surface $\tilde{S}'$ with the section equal to the exceptional curve over $s'$. Following \cite{Lang},  the birational transformation $h:S\da \tilde{S'}$ is called  an \emph{Halphen transform}. It replaces an index 2 elliptic fibration $f:S\to \bbP^1$ with a jacobian elliptic fibration  $f':\tilde{S}'\to \bbP^1$.  Note that the elliptic fibration $f'$ is not, in general,  isomorphic to the jacobian fibration of $f:S\to \bbP^1$ although it is defined by the same data that consists of a choice of a fiber on $f'$ and a non-trivial $2$-torsion element in its Picard group (in our case defined by $\calO_{h(F_0)}(s-s')$). Note that the elliptic pencil on $\tilde{S}'$ is the pre-image of the pencil of lines through the point $q_1$.
\end{remark}

\begin{remark} In the case of a degenerate configuration where $2F_0$ is a reducible fiber, we obtain a similar  double plane model. The difference only is that the singular point $q_1$ of the quintic is not a tacnode anymore but a double cusp and the line $L$ intersects the quintic at this point with multiplicity $2$. Its proper transform on $S$ is a component of the double fiber which intersects $E_1$. The other two components are mapped to the curve $\calE_{q_1^{(1)}}$ that splits in the cover
\end{remark}  

Recall that we have proved in Lemma \ref{eight} that there exists a $(-1)$-curve $E_0$ which intersects two irreducible components of each reducible fiber of $S$. In Theorem \ref{unique} we proved the existence of $8$ $(-1)$-curves in $S$ which together with $E_0$ form a set of nine disjoint $(-1)$-curves which can be blow-down to $\mathbb P^2$ so that the images of the reducible fibers is a Chilean configuration. Let us see this from the double cover.

Let $C_i$ be a conic through $q_1,q_1^{(1)}$ and three points $y_j, j\ne i$. It does not intersect the branch curve and its  proper transform $\bar{C}_i$ on $X$ is a $(-1)$-curve. So it splits in the cover $\tilde{\phi}:\tilde{S}\to X$ into two disjoint $(-1)$-curves that are disjoint from the  exceptional divisor of $\tilde{S}\to S$. Thus, the proper transform of $C_i$ is the union of two disjoint $(-1)$-curves $E_i+E_{-i}$. Since the curves $\bar{C}_i$ are disjoint and do not pass through $q_2$, we obtain a set of 9  disjoint $(-1)$-curves $E_0,E_i,E_{-i}$ that define a birational morphism $\pi:S\to \bbP^2$.

\begin{proposition}\label{prop:unique} Every $(-1)$-curve $E_0$ can be included into a set $\calE(E_0)$ of 9 disjoint $(-2)$-curves 
$E_0,E_1,E_{-1},\ldots,E_4,E_{-4}$ such that the Bertini involution $\beta_{E_0}$ sends $E_i$ to $E_{-i}$ and leaves invariant $E_1$. Moreover, if $y_i$ is an ordinary node, the  image of the fiber $F_i$ is the union of three conics. If $y_i$ is a cusp, then the image of $F_i$ is the union of a quartic and two lines.
\end{proposition}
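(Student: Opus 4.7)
The plan is to construct the nine $(-1)$-curves $\calE(E_0)$ directly from the double plane model, read off the action of $\beta_{E_0}$ from the deck transformation of $\tilde\phi\colon\tilde S\to X$, and then determine the degrees of the images of the components of $F_i$ via self-intersection together with a Cauchy-Schwarz rigidity argument.

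For each $k\in\{1,\dots,4\}$ I would take $C_k\subset\bbP^2$ to be the unique conic through $q_1$, $q_1^{(1)}$, and the three points $y_j$ with $j\ne k$. Its proper transform $\bar C_k$ on $X$ is a $(-1)$-curve, disjoint from $\calE_{q_2}$, meeting the branch divisor $\bar W_0+\bar L$ only at points of even local multiplicity (verified locally at each of the three blown-up $y_j$'s with $j\ne k$). Hence $\tilde\phi^{-1}(\bar C_k)$ is a disjoint union of two $(-1)$-curves $E_k\cup E_{-k}$ swapped by the deck transformation, and using $\bar C_k\cdot\bar C_{k'}=0$ on $X$ one verifies that together with $E_0$ they form a pairwise disjoint set $\calE(E_0)$ of nine $(-1)$-curves. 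The Bertini involution $\beta_{E_0}$ is precisely the deck transformation of $\tilde\phi$, so it interchanges $E_k\leftrightarrow E_{-k}$; the strict transform $\bar E_0$ is a $2$-to-$1$ cover of $\calE_{q_2}$ not lying in the ramification locus, so $E_0\subset S$ is preserved set-wise.

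Let $e_0\in\Pic(S)$ be the hyperplane class of the blow-down $\pi$ associated to $\calE(E_0)$. From $K_S=-3e_0+E_0+\sum_{k=1}^{4}(E_k+E_{-k})$ and $K_S\cdot\Theta=0$ I obtain for any component $\Theta$ of $F_i$ the identity
\[
 3\deg\pi(\Theta)\;=\;\Theta\cdot E_0+\sum_{k=1}^{4}\Theta\cdot(E_k+E_{-k}).
\]
If $y_i$ is a node, $\beta_{E_0}$ swaps the two components met by $E_0$ and fixes the third, say $R_i^{(3)}$, set-wise; writing $\deg\pi(R_i^{(1)})=\deg\pi(R_i^{(2)})=d$, so $\deg\pi(R_i^{(3)})=6-2d$ with $d\ge 1$ (as the $(-2)$-curve $R_i^{(3)}$ is not in $\calE(E_0)$ and hence is not contracted by $\pi$), the relations $(R_i^{(3)})^2=-2$ and $R_i^{(3)}\cdot E_0=0$ give $\sum_k(R_i^{(3)}\cdot E_{\pm k})^2=(6-2d)^2+2$ and $\sum_k R_i^{(3)}\cdot(E_k+E_{-k})=18-6d$. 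The Cauchy-Schwarz inequality $(18-6d)^2\le 8\bigl((6-2d)^2+2\bigr)$ becomes an equality at $d=1$, forcing each of the eight intersection numbers to equal $\tfrac{3}{2}$, which is absurd; hence $d=2$ and $\pi(F_i)$ is a union of three conics. If $y_i$ is a cusp, $\beta_{E_0}$ fixes the tangency point $E_0\cap R_i^{(1)}$, preserves $R_i^{(1)}$, and swaps $R_i^{(2)}\leftrightarrow R_i^{(3)}$; writing $\deg\pi(R_i^{(2)})=\deg\pi(R_i^{(3)})=d$, so $\deg\pi(R_i^{(1)})=6-2d$ with $d\ge 1$, the relations $(R_i^{(1)})^2=-2$ and $R_i^{(1)}\cdot E_0=2$ give $\sum_k(R_i^{(1)}\cdot E_{\pm k})^2=(6-2d)^2-2$ and $\sum_k R_i^{(1)}\cdot(E_k+E_{-k})=16-6d$. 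The Cauchy-Schwarz inequality $(16-6d)^2\le 8\bigl((6-2d)^2-2\bigr)$ becomes an equality at $d=2$, forcing each intersection number to equal $\tfrac{1}{2}$, which is absurd; hence $d=1$ and $\pi(F_i)$ is a quartic plus two lines. The main obstacle in both cases is locating the precise rigidity instance of Cauchy-Schwarz that fails under the integrality constraint, thereby singling out the correct degree triple from the combinatorially admissible options.
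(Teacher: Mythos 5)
Your proof is correct, and its two halves relate to the paper's argument differently. The construction of $\calE(E_0)$ from the conics through $q_1$, $q_1^{(1)}$ and three of the $y_j$, together with the identification of $\beta_{E_0}$ with the deck transformation of $\tilde{\phi}$, is exactly what the paper does in the paragraph preceding the proposition; the only small quibble is that the cleanest justification for the splitting of $\tilde{\phi}^{-1}(\bar{C}_k)$ is that $\bar{C}_k$ is \emph{disjoint} from the branch divisor (the intersection numbers $C_k\cdot W_0=10$ and $C_k\cdot L=2$ are entirely absorbed at the blown-up points), so the restricted cover is an \'etale double cover of $\bbP^1$ and hence trivial --- ``meeting the branch divisor at points of even local multiplicity'' by itself would not force splitting. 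For the last assertion your route is genuinely different. The paper pins down the full intersection pattern of each component of $F_i$ with all nine curves of $\calE(E_0)$, using how each conic $C_j$ passes through the nodes and cusps, and then computes $\pi(R)^2=R^2+\sum_{E}(R\cdot E)^2$, discarding the bad subcase because $6$ is not a perfect square. You instead use only numerical data --- adjunction giving $3\deg\pi(\Theta)=\Theta\cdot\bigl(E_0+\sum_k(E_k+E_{-k})\bigr)$, the $\beta_{E_0}$-symmetry forcing two of the three degrees to coincide, positivity of degrees, and the self-intersection constraint --- and then eliminate the wrong degree triple via the equality case of Cauchy--Schwarz (your identities $(18-6d)^2=8\bigl((6-2d)^2+2\bigr)$ at $d=1$ and $(16-6d)^2=8\bigl((6-2d)^2-2\bigr)$ at $d=2$ both check out, and the forced values $3/2$ and $1/2$ are indeed non-integral). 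Your version requires less geometric input, since it never determines how the $E_{\pm j}$ with $j\ne i$ distribute among the components of $F_i$; the paper's version is marginally longer but yields that explicit intersection table, which is reused later in the text.
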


\begin{proof} Only the last assertion has not been proven yet. For simplicity of notation we assume that $y_i = y_1$. Suppose $y_1$ is a node. The curve $E_1$ intersects $R_1^{(1)}$ and $R_1^{(2)}$. Then each conic $C_i, i\ne 1,$ passes through $y_1$ and hence $E_{\pm i}$ intersects  $R_1^{(0)}$ and $R_1^{(1)}$ (or $R_1^{(1)}$). The curve  $E_{\pm 1}$ intersects either $R_0^{(1)}$ or $R_0^{(2)}$  with multiplicity 1 or intersects one of these curves with multiplicity 2.  In the first case each component $R_0^{(k)}$ intersects six curves from $\calE(E_0)$ and hence its image under the blowing down morphism is a conic. In the second case, $R_0^{(1)}$ intersects 4 curves $E_0,E_2,E_3,E_4$ with multiplicity 1 and one curve $E_1$ with multiplicity $2$. The self-intersection of its image under $\pi$ is equal to $-2+4+4 = 6$, a contradiction. This proves the assertion in the case when $y_i$ is a node.

Suppose $y_1$ is a cusp.   The curve $E_1$ intersects $R_1^{(0)}$ with multiplicity $2$. Each conic $C_i, i\ne 1,$ passes through $y_1$ and hence $E_{\pm i}$ intersects  both $R_1^{(1)}$ and $R_1^{(2)}$. Each curve  $E_{\pm 1}$ intersects  $R_1^{(0)}$ with multiplicity 2. Thus 
 $R_1^{(\pm 1)}$ intersects 3 curves from $\calE(E_0)$ and the curve $R_1^{(0)}$ intersects 6 curves from $\calE(E_))$ with multiplicity 1 and three curves from this set with multiplicity 2. This implies that the image of $R_1^{(\pm 1)}$ is a line and the image of $R_1^{(\pm 1)}$ has self-intersection $-2+6+12 = 16$, i.e. it is a quartic.  This proves the assertion in the case when $y_i$ is cusp.
 \end{proof}

Let $\calE(E_0)$ denote the set of 9 disjoint $(-1)$-curves including $E_0$ obtained by the construction from Proposition \ref{prop:unique}. 

\begin{definition} A set of nine disjoint $(-1)$-curves on $S$ is called a \emph{Chilean set of exceptional curves} if the image of the elliptic pencil under a birational map that blows down these curves is an Halphen pencil whose reducible members consist of three  conics.
\end{definition}

\begin{corollary} Suppose $E_0$ intersects 8 irreducible components of reducible fibers (the existence of such curve follows from Lemma \ref{eight}). Then $\calE(E_0)$ is  a Chilean set of exceptional curves and the branch curve of the double plane model does not have cusps.
\end{corollary}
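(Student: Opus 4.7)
The plan is to reduce both assertions to the case analysis carried out just before Proposition~\ref{prop:unique}. First I would compute $E_0 \cdot F$ for a general fiber $F$: since $S$ is an Halphen surface of index $2$, the fiber class satisfies $F \sim -2K_S$, and $E_0 \cdot K_S = -1$ gives $E_0 \cdot F = 2$. By flatness this equals $E_0 \cdot F_i = 2$ for each of the four reducible fibers $F_i$.

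Next I would carry out a counting argument on the four $I_3$ fibers. Each $F_i$ has three components $R_i^{(0)}, R_i^{(1)}, R_i^{(2)}$, and the intersection numbers $E_0 \cdot R_i^{(j)}$ are nonnegative integers summing to $2$. The only possibilities are:
\begin{itemize}
\item[(A)] one component meets $E_0$ with multiplicity $2$ (so $E_0$ meets only one component of $F_i$);
\item[(B)] two components meet $E_0$ each with multiplicity $1$ (so $E_0$ meets exactly two components of $F_i$).
\end{itemize}
If $k$ denotes the number of reducible fibers in case (B), then the total number of components of reducible fibers met by $E_0$ is $2k + (4-k) = k+4$. The hypothesis that this number equals $8$ forces $k=4$, so every reducible fiber is in case (B) and $E_0$ meets each triangle in exactly two of its sides transversally.

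Combining this with the description of the double plane model $\phi \colon S \dashrightarrow \bbP^2$ associated to $E_0$ immediately gives the second assertion. Indeed, as explained in this section, the component of $F_i$ disjoint from $E_0$ is contracted by $\phi$ to a singular point $y_i$ of the quintic branch curve $W_0$, which is an ordinary node in case (B) and an ordinary cusp in case (A). Since every fiber is in case (B), all of $y_1,\dots,y_4$ are nodes; the only remaining singular point of $W_0$ is $q_1$, which is a tacnode because we are in the non-degenerate setting (the half-fiber $F_0$ is not one of the four triangles), and $L$ is a smooth line. Thus $W = W_0 \cup L$ has no cuspidal point.

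Finally, for the first assertion I would apply Proposition~\ref{prop:unique} directly: since each $y_i$ is a node, that proposition tells us the image of each reducible fiber $F_i$ under the blow-down $\pi \colon S \to \bbP^2$ defined by the set $\calE(E_0)$ is a union of three conics. By the definition given just before the corollary, this is exactly the statement that $\calE(E_0)$ is a Chilean set of exceptional curves. There is essentially no obstacle here once the counting $k+4=8\Rightarrow k=4$ is observed; the rest is unpacking the constructions already built in this section.
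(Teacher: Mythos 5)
Your proposal is correct and follows essentially the same route as the paper: the hypothesis that $E_0$ meets $8$ components forces each of the four $I_3$ fibers to be met in two sides transversally (your count $k+4=8$), so no component is met with multiplicity $2$, hence every $y_i$ is a node and the conclusion follows from Proposition~\ref{prop:unique}. The paper's proof is just a terser version of the same argument; your explicit dichotomy (A)/(B) and the $E_0\cdot F=2$ computation merely spell out what the paper leaves implicit.
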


\begin{proof} Since $E_0$ does not intersect any component $R_i^{(k)}$ with multiplicity 2, it follows from the proof of the previous proposition that all singular points $y_i$ are nodes.
\end{proof}

\begin{proposition}\label{prop:unique1} There is only one Chilean set of exceptional curves on $S$.
\end{proposition}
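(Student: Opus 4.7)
The plan is a short lattice-theoretic argument. Suppose $\calE$ and $\calE'$ are two Chilean sets of exceptional curves on $S$, with corresponding hyperplane classes $e_0$ and $e_0'$. I would show $\calE=\calE'$ by contradiction. Set $M=|\calE\cap\calE'|$, $U=\calE\setminus\calE'$, $V=\calE'\setminus\calE$, and consider
\[
 D := \sum_{E\in U}[E] - \sum_{E'\in V}[E'] \in\Pic(S).
\]

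The starting observation is that the Chilean condition admits a clean intersection-theoretic translation. For any Chilean set, the relation $K_S=-3e_0+\sum_{i=1}^{9}[E_i]$ combined with $R\cdot K_S=0$ and the conic condition $R\cdot e_0 = 2$ gives $\sum_{i=1}^{9} R\cdot E_i = 6$ for every $(-2)$-curve $R$. Applying this to both $\calE$ and $\calE'$ and subtracting yields $D\cdot R = 0$ for every such $R$. Moreover, since $K_S\sim -F_0$, every $(-1)$-curve satisfies $E\cdot F_0 = 1$, hence $D\cdot F_0 = |U|-|V|=0$.

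The heart of the proof is to identify the sublattice $\Lambda\subset\Pic(S)$ spanned by the twelve $(-2)$-curves together with $[F_0]$. In the Hesse case the four $I_3$ fibers contribute a root sublattice of type $4A_2$ of rank $8$, which together with the radical $\langle [F_0]\rangle$ forms a rank-$9$ sublattice of $\Pic(S)$. Hence $\Lambda^\perp_{\bbQ}$ is one-dimensional; since $K_S$ is orthogonal both to every $(-2)$-curve (by adjunction) and to $F_0$, it must equal $\bbQ\, K_S$. Therefore $D=qK_S$ for some $q\in\bbQ$, and $D^2 = q^2 K_S^2 = 0$.

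Computing $D^2$ directly then closes the argument. Pairwise disjointness of the elements of each Chilean set yields $\bigl(\sum_{E\in U}[E]\bigr)^2 = -|U|$ and likewise for $V$, while the cross-term $\sum_{E\in U,\,E'\in V} E\cdot E'$ is a non-negative integer since the members of $U$ and $V$ are distinct irreducible curves. Thus
\[
 0 = D^2 = -2(9-M) - 2\sum_{(E,E')\in U\times V} E\cdot E',
\]
forcing $9-M\leq 0$, i.e.\ $M=9$ and $\calE=\calE'$. The key step to spot is the orthogonality translation of the Chilean condition; once $D\in\bbQ\, K_S$ has been established, the remainder is essentially forced by self-intersection book-keeping, and there is no need to enumerate $(-1)$-curves explicitly as in Proposition~\ref{minus1}.
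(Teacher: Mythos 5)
Your argument is correct and follows essentially the same route as the paper's own proof: both translate the Chilean condition into $R\cdot e_0=2$, deduce that the difference of the two sums of exceptional curves is orthogonal to the rank-$9$ lattice spanned by the fiber components and $F_0$, hence lies in $\bbQ K_S$ (equivalently $\bbZ[F_0]$), and then finish with a self-intersection computation. Your bookkeeping with the symmetric difference $U$, $V$ merely merges the paper's two closing steps (first showing $\Sigma_1-\Sigma_2\sim nF_0$ forces $n=0$, then intersecting with each $E\in\calE_1$ to upgrade linear equivalence to equality of sets) into a single positivity argument.
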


\begin{proof} Let $\calE_1$ and $\calE_2$ be two Chilean sets of exceptional curves. Let $\Sigma_i = \sum_{E\in \calE_i}E$. Since each $R_i^{(k)}$ intersects six curves from $\calE_1$ and $\calE_2$, we obtain $\Sigma_i\cdot R_i^{(k)} = 6$. Thus $\Sigma_1-\Sigma_2$ belongs to the orthogonal complement of the sublattice $\Pic_{\textrm{fib}}(S)$ of $\Pic(S)$ spanned by the components of fibers. Since $\Pic_{\textrm{fib}}(S)$ is a lattice of rank $9$ with radical spanned by $[F_0]$, its orthogonal complement is equal to $\bbZ[F_0]$. This shows that $\Sigma_1-\Sigma_2\sim nF_0$.  Then $(\Sigma_1-\Sigma_2)^2= -9 -2(9n-9)-9=0$ and so $n=0$. Therefore $\Sigma_1 \sim \Sigma_2$, but now we intersect with each $E \in \calE_1$ to get that $\Sigma_1 = \Sigma_2$.

%Let $ A = \calE_1\cap \calE_2$ and $k = \#A$. Then $(\Sigma_1-\Sigma_2)^2 = (\Sigma_1-A)^2+(\Sigma_2-A)^2 = 2(9-k)= 0$. This gives $k = 9$ and hence the two sets coincide.
\end{proof}

\begin{corollary}\label{invariant} The Chilean set $\calE$ of exceptional curves forms one orbit with respect to the action of $\Aut(S)$.
\end{corollary}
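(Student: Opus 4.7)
The plan is to combine the uniqueness statement of Proposition \ref{prop:unique1} with the action on $S$ of the Mordell-Weil group $N$ of the jacobian Hesse fibration. First I would observe that, since $\calE$ is the unique Chilean set of exceptional curves on $S$, any $g\in\Aut(S)$ must send $\calE$ to another Chilean set, which can only be $\calE$ itself. Hence $\Aut(S)$ permutes the nine curves in $\calE$ and the task reduces to producing enough automorphisms to make this permutation action transitive.

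For transitivity I would exploit the subgroup $N\subseteq \Aut(S)$ coming from translations by the Mordell-Weil group of the jacobian fibration. By Section 3, for the Hesse jacobian $N$ is isomorphic to $(\bbZ/3\bbZ)^{\oplus 2}$ and acts biregularly on $S$ by translations along the fibers. The main tool is the restriction map $\res\colon \Exc(S)\to F_0$ from Section 2. Every $E\in\calE$ satisfies $E\cdot F_0 = -E\cdot K_S = 1$, so $\res(E)$ is a single point of $F_0$; moreover the nine values $\res(E)$ are exactly the base points $p_1,\ldots,p_9$ of the Chilean pencil constructed in Section \ref{chileconf}.

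By Proposition \ref{pro:pts} and Remark \ref{rem:act}, the quotient $K_S^\perp/(\Lambda+\langle F_0\rangle)\cong N$ acts freely on $\mathfrak{pts}_0$ with $m=2$ orbits of size nine, namely the flexes $\{x_1,\ldots,x_9\}$ and their translates $\{p_1,\ldots,p_9\}$ by the non-trivial $2$-torsion class $\tau=\res(F_0)$. Since $\res$ is $N$-equivariant and biject $\calE$ with the orbit $\{p_1,\ldots,p_9\}$, it follows that $N$ acts simply transitively on $\calE$, and a fortiori $\Aut(S)$ acts transitively on $\calE$. I do not foresee a substantial obstacle: all the real work has been done in Propositions \ref{pro:pts} and \ref{prop:unique1}, and this corollary is essentially the equivariant bookkeeping that glues them together.
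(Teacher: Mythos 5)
Your proof is correct and follows the same strategy as the paper: uniqueness of the Chilean set (Proposition \ref{prop:unique1}) forces $\Aut(S)$-invariance of $\calE$, and the translation action of the Mordell--Weil group $(\bbZ/3\bbZ)^{\oplus 2}$ of the jacobian fibration supplies simple transitivity. The only difference is in the justification of that last step: you route it through the equivariant restriction map $\res\colon\Exc(S)\to\mathfrak{pts}_0$ and the free transitive action on the orbit $\{p_1,\dots,p_9\}$ from Proposition \ref{pro:pts} and Remark \ref{rem:act}, whereas the paper argues directly that the translations cyclically permute the components of each singular fiber and hence fix no curve of $\calE$ — both are valid and rest on the same earlier results.
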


\begin{proof} By the uniqueness of  $\calE$, it is invariant with respect to any automorphism of $S$. The group $\Aut(S)$ contains as a subgroup $H$ the Mordell-Weil group of $j(f)$ isomorphic to $(\bbZ/3\bbZ)^2$. It acts simply transitively on the set, because it acts transitively on the set of 9 sections of $j(f)$. This can be seen by using that $H$ permutes cyclically components of each singular fiber and hence cannot fix any of the curves from $\calE$. 
\end{proof}

Let us now fix a Chilean set $\{E_1,\ldots,E_9\}$ of $(-1)$-curves. We would like to find the set $\Exc(S)$ of $(-1)$-curves on $S$ in terms the proper transforms under $\phi \colon S\da \bbP^2$ of some curves $C$ in the plane. We already know that the curves $E_2,\ldots,E_9$ come from conics passing through three of the double points of the branch quintic curve and passing through $q_1$ and its infinitely near point $q_1^{(1)}$. We fix $E_1$ that intersects $F_0$ at a nonsingular point $s$. We also have another point $s'$ such that $\calO_{F_0}(s-s') \cong {\rm res}(F_0)$. Let $\Exc_0(S)$ be the subset of $\Exc(S)$ of $(-1)$-curves passing either through $s$ or $s'$. It coincides with the set $\res^{-1}(\mathfrak{pts}_0)$ defined in Section 2. By Propositions \ref{pro:res} and \ref{pro:bij}, it coincides with $L^{{\rm trop}}(E_1)/(mK_S)$. By Proposition \ref{freeaction}, the Bertini involution $\beta_{E_1}$ acts trivially on it. Thus all curves from the set $\Exc_0(S)$ are invariant with respect to $\beta_{E_1}$. In other words, their images in the plane under the map $\phi$ do not split in the cover. 

We write  $a_s = 1$ if $s\in E$ and $a_{s'} = 1$ if $s'\in E$.

Let $E$ be any $(-1)$-curve invariant with respect to $\beta_{E_1}$. Let $R_i^{(0)},R_i^{(1)},R_i^{(2)}, i = 1,2,3,4,$ be the components of singular fibers indexed as in the proof of Lemma \ref{eight}, i.e. we assume that $E_1$ does not intersect $R_i^{(0)}$. Since the group $\Pic(S)_\bbQ^{\beta_{E_1}^*}$ of rational divisor classes invariant with respect to $\beta_{E_1}^*$ is freely generated by the divisor classes of $E_1, F_0$ and $R_1^{(0)},\ldots,R_4^{(i)}$ we can write 
$$E\sim k_0E_1-\sum_{i=1}^4k_iR_i^{(0)}+kF_0$$
for some rational coefficients. Let
$$n:= E\cdot E_1.$$
Intersecting both sides with $F_0$, we find $k_0 = 1$. Intersecting both sides with $E_1$ we find $k = n+1$. Since $E$ is invariant, it either intersects both $R_i^{(1)}$ and $R_i^{(2)}$, or intersects $R_i^{(0)}$ with multiplicity 2. Intersecting with $R_i^{(0)}$, we find that $k_i = 0$ in the former case and $k_i = 1$ in the latter case. 

 Let $v_C = \sum_{i=1}^4k_i$ be the number of nodes $y_i$ lying on $C$. The equality 
$-1 = E^2 = E_1^2+2(n+1)-2\sum_{i=1}^4k_i = -1+2(n+1)-2v_C$ gives
$$n+1 = v_C.$$

The curve $C$ passes through $v_C$ nodes, passes through $q_1$ if $a_{s'} = 1$ and passes through $q_2$ with multiplicity $n-a_s$. Since the proper transform $\bar{E}$ of $E$ on $\tilde{S}$ is the pre-image of a curve on $X$, $\bar{E}^2$ must be even (and this confirms that $a_s+a_{s'} = 1$), and since blowing only one of points on $E$, we have  $\bar{E}^2 = -2$, and hence the proper transform $\bar{C}$ on $X$ has self-intersection equal to $-1$. Since $\bar{C}$ intersects the exceptional curve $\calE_s$ over $q_2$ with multiplicity $\half (n-a_s)$, this gives 
$$\deg(C)^2=v_C+a_{s'}+\frac{1}{4}(n-a_s)^2-1.$$

Now we can list all possible $C$.

\begin{itemize}
\item[(i)] $ n = 0, v_C = 1, n = 1, q_1\in C, q_2\not\in C, \deg(C) = 1$;
\item[(ii)] $n = 1, v_C = 2, a_{s} = 1,   q_1, q_2\not\in C, \deg(C) = 1$;
\item[(iii)] $n = 2,  v_C = 3, a_{s'} = 1,  q_1, q_2\in C, \deg(C) = 2$;
\item[(iv)] $n = 3,  v_C = 4, a_{s} = 1,  q_1\in C, q_2\not\in C, \deg(C) = 2$;
\end{itemize}
In another words, $C$ is either a line passing  through $q_2$ and one node (case (i)), or a line passing through two nodes (case (ii)),  or a conic passing  passing through $q_1,q_2$ and three points $y_i$ (case (iii)), or a conic passing 
through $q_2$ and all points $y_i$ (case (iv)).

Let $(e_0,e_1,\ldots,e_9)$ be the geometric basis corresponding to the base points of the Chilean Halphen pencil. We take the double model corresponding to $E_1$. Then the divisor class of 
 a $(-1)$-curve $E$ is $de_0-\sum_{i=1}^9$. 
We have $m_1 = E\ cdot E_1 = n$. Since $E_2,\ldots,E_9$ are the split pre-images of the conics through three nodes and $q_1,q_1^{(1)}$,  we can group them in pairs and redenoting them by $E_2,E_{-2},E_3,E_{-3},E_4,E_{-4}$. We assume that  $\phi(E_i+E_{-i})$ is the conic passing through all points $y_i$ except $y_i$.  Intersecting $C = \phi(E)$ with one of the curves, we find the divisor class of $[E]$ to be equal to 

$$[E] = \begin{cases}
  e_0-e_{i}-e_{-i} & \text{in case (i)},\\
  2e_0-e_1-e_i-e_{-i}-e_j-e_{-j}&\text{in case (ii)}, \\
  3e_0-2e_1-\sum_{j\ne i}(e_j+e_{-j})&\text{in case (iii)}, \\
     4e_0-3e_1-\sum (e_i+e_{-i}) & \text{in case (iv)}.
\end{cases}
$$

As we see that, together with $E_1$  there are 16 such curves as expected from our general theory.

 Since each $E_i\cap \beta_{E_1}(E_i) = \emptyset$ for $i\ne 1$, the branch curve $B$ does not intersect such $E_i$. It intersects $E_1$ with multiplicity $1$. This gives 
\beq\label{branchclass}
[B] = e_0-e_1
\eeq
 In other words, the image of $B$ in the plane is a line passing through $x_1$.

To find the rest of $144-16$ curves, we use that the translation group acts simply transitively on the set of curves $E_1,E_i,E_{-i}$. Thus it leaves the class $\sum_{i=1}^9e_i$ invariant, and hence leaves invariant the class $e_0$ that defines the blowing down $\pi:S\to \bbP^2$. This means that the group of translations descend to a group of projective transformations of the plane. Using the explicit formulas for the base points of the Chilean pencil from section 5, we find that the group is generated by transformations $(x:y:z)\mapsto (z:y:x)$ and $(x:y;z)\mapsto (x:\epsilon y:\epsilon^2 z)$. Now the rest of the curves are obtained by applying these transformations to the 16 that has been already found. The following Table gives the result, and it also gives the images of $(-1)$-curves under $\phi$.

\begin{table}[!htbp]
  \centering 
  \scalebox{0.9}{
  $\displaystyle  
\begin{array}{|c|c|c|c|c|c|c|c|c|c|}
\hline
\deg(C)&n&v_C&u_C&a_s&a_{s'}&\deg(\pi(E))&{\rm split}&{\rm class}&\#\\ \hline
0&0&0&0&0&0&0&{\rm no}&e_1&1\\
1&0&1&0&0&1&1&{\rm no}&e_0-e_{i}-e_{-i}&4\\
1&1&2&0&1&0&2&{\rm no}&2e_0-e_1-e_i-e_{-i}-e_j-e_{-j}&6\\
2&2&3&0&0&1&3&{\rm no}&3e_0-2e_1-\sum_{j\ne i}(e_j+e_{-j})&4\\
2&3&4&0&1&0&4&{\rm no}&4e_0-3e_1-\sum (e_i+e_{-i})&1\\ \hline
2&0&3&0&0&0&0&{\rm yes}&e_{\pm i}&8\\
2&0&2&0&0&0&1&{\rm yes}&e_0-e_{\pm i}-e_{\pm j}&24\\
2&0&1&0&0&0&2&{\rm yes}&2e_0-e_i-e_{-i}-e_{\pm j}-e_{\pm k}-e_{\mp l}&24\\
2&0&0&0&0&0&3&{\rm yes}&3e_0-2e_{\pm i}-\sum_{j\ne i}(e_j+e_{-j})&8\\ \hline
3&1&3&1&0&0&1&{\rm yes}&e_0-e_1-e_{\pm i}&8\\
3&1&2&1&0&0&2&{\rm yes}&2e_0-e_1-e_i-e_{-i}-e_{\pm j}-e_{\mp k}&24\\
3&1&1&1&0&0&3&{\rm yes}&3e_0-e_1-e_i-e_{-i}-2e_{\pm j}-e_{\mp j}-e_{\pm k}-e_{\pm l}&24\\
3&1&0&1&0&0&4&{\rm yes}&4e_0-e_1-3e_{\pm i}-e_{\mp i}-\sum_{j\ne i}(e_j+e_{-j})&8\\ \hline
\end{array}$}
\vspace{0.3cm}
\caption{The classes of the $144$ $(-1)$-curves on $S$}
\label{144}
\end{table}
Here $u_C$ denotes the number of double points of $C$ among $y_1,\ldots,y_4$. We see that  all 144 exceptional curves are accounted for.

\begin{remark}\label{ptF}
An explicit computer computation gives the equation of the branch quintic curve $W_0$: 
$$
\begin{array}{l}
 x^3y^2 
 + 2\epsilon x^2y^3 
 + \epsilon^2xy^4 
 + 2\epsilon^2x^3yz 
 + (2a^3 + 4)x^2y^2z\\ 
 + 2\epsilon a^3xy^3z 
 + 2\epsilon^2(2a^3 - 1)y^4z 
 + \epsilon(-4a^3 + 1)x^3z^2 
 + \epsilon^2(-10a^3 +4)x^2yz^2 
 + (a^6 - 12a^3 - 4)xy^2z^2\\ 
 + 4\epsilon(a^3 - 2)y^3z^2 
 + (-4\epsilon a^6 - 16ea^3 + 2e)x^2z^3 
 - 8\epsilon^2(5a^3 + 1)xyz^3 
 + (2a^6 - 32a^3 - 16)y^2z^3 \\
 + (-16\epsilon a^6 - 16ea^3 - 4e)xz^4 
 - 8\epsilon^2(5a^3 +2)yz^4 
 + (-16\epsilon a^6 - 8\epsilon)z^5 = 0
 \end{array}
$$
\end{remark}

Now consider the blow-up of the base points of the Halphen pencil defining a Chilean configuration. Fix one of the exceptional curves $E_i$, and consider the double plane model $\phi_i \colon S \dashrightarrow \bbP^2$ defined by $E_i$. Using \eqref{branchclass}, we find that the branch curve $B_i$ of $\phi_i$ is represented in the geometric basis by $e_0-e_i$. So the image of $B_i$ under the blowing down map $\pi \colon S\to \bbP^2$ is a line $\ell_i$ passing through the base point $x_i$. Since $B_i$ intersects each reducible fiber at one of its singular point and intersects the component opposite to this point with multiplicity 2, we see that $\ell_i$ passes through the  intersection points of four conics in the Chilean configuration that contain the base point $x_i$. This defines a configuration of 9 lines and 12 points isomorphic to the dual Hesse configuration $(9_4,12_3)$.
 
We have proved the following.
 
\begin{theorem} A Chilean configuration of 12 conics defines nine lines $\ell_i$ and  12 intersection points of conics isomorphic to the dual Hesse configuration $(9_4,12_3)$.   
\end{theorem}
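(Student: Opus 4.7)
The plan is to associate to each base point $x_i$ a line $\ell_i \subset \bbP^2$ via the double plane model $\phi_i \colon S \da \bbP^2$ defined by $E_i$ (constructed earlier in this section), and then verify that the resulting incidences between the nine $\ell_i$'s and the twelve singular points of the four reducible fibers (i.e.\ the twelve intersection points of pairs of conics belonging to a common reducible fiber of the Chilean configuration) realize the dual Hesse combinatorics. By equation~\eqref{branchclass}, the branch curve $B_i$ of $\phi_i$ has divisor class $e_0-e_i$ in $\Pic(S)$, so its image $\ell_i := \pi(B_i)$ is a line through $x_i$.

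The key step is to analyze how $B_i$ meets each reducible fiber $F_j = R_j^{(0)}+R_j^{(1)}+R_j^{(2)}$, labelled so that $E_i\cdot R_j^{(1)} = E_i\cdot R_j^{(2)} = 1$ and $E_i\cdot R_j^{(0)} = 0$. The Bertini involution $\beta_{E_i}$ preserves the fibration, permutes the three components of $F_j$, and preserves $E_i$, hence also each intersection number $E_i\cdot R_j^{(k)}$. Since acting trivially on the components would force $B_i\cdot F_j = 6$, contradicting $B_i\cdot F_j = (e_0-e_i)\cdot F = 4$, the involution must swap $R_j^{(1)}\leftrightarrow R_j^{(2)}$ and preserve $R_j^{(0)}$ setwise. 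The node $v_j := R_j^{(1)}\cap R_j^{(2)}$ is therefore fixed by $\beta_{E_i}$ and lies on $B_i$, while $B_i$ meets $R_j^{(0)}\cong \bbP^1$ at the two fixed points of the restricted involution, accounting for the missing multiplicity $2$. Projecting to $\bbP^2$, $\pi(v_j)$ is a free (non-base-point) intersection of the two conics $\pi(R_j^{(1)})$ and $\pi(R_j^{(2)})$, so $\ell_i$ passes through exactly four of the twelve intersection points, one per reducible fiber.

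For the incidence count, $v_j\in \ell_i$ iff $x_i$ is a common base point of $\pi(R_j^{(1)})$ and $\pi(R_j^{(2)})$. Since $R_j^{(1)}\cdot R_j^{(2)} = 1$ in the $I_3$ fiber, Bezout forces the two plane conics to share exactly three base points, so each of the twelve intersection points lies on exactly three lines $\ell_i$, giving the abstract parameters $(9_4, 12_3)$. To identify this with the dual Hesse configuration specifically, I would use the affine-plane structure on the nine base points coming from the Hesse pencil via the Mordell-Weil group $(\bbZ/3\bbZ)^{\oplus 2}$: each conic in the Chilean configuration corresponds to the triple $P$ of base points it does \emph{not} contain, and these twelve triples are exactly the twelve lines of $\bbF_3^2$. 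Sending $v_j$ to the triple complementary to the pair of conics defining it turns the equivalence $v_j\in\ell_i \Leftrightarrow x_i\in P$ into the defining point-line incidence of the dual of the Hesse configuration. The main obstacle I expect is the swap-and-fix assertion for $\beta_{E_i}$ on each reducible fiber; the remaining arguments are divisor-class bookkeeping and a straightforward combinatorial comparison.
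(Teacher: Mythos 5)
Your proof is correct and follows essentially the same route as the paper: both go through the double plane model, the branch class $[B_i]=e_0-e_i$ from \eqref{branchclass}, and the fact that $B_i$ passes through the node $R_j^{(1)}\cap R_j^{(2)}$ of each reducible fiber while meeting $R_j^{(0)}$ twice; you simply make explicit the incidence count (two conics of one fiber share exactly three base points) and the $\bbF_3^2$ identification that the paper leaves implicit. One small imprecision in your swap argument: if $\beta_{E_i}$ preserved all three components of $F_j$, the fixed points of the restricted involution would be forced to be the three nodes, and these would then be isolated fixed points of the surface involution rather than points of the fixed curve $B_i$, so the correct contradiction is $B_i\cdot F_j=0\neq 4$ rather than $6\neq 4$; the conclusion that $R_j^{(1)}$ and $R_j^{(2)}$ are interchanged still follows.
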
 
 
\begin{remark} 
\label{rem:inv}
Note that the Hesse pencil is defined over $\bbF_4$ if we assume that the characteristic is equal to 2. Its set of 12 line components of fibers, 9 harmonic polars and the set of 9 base points and 12 singular points of fibers is equal to the set of 21 lines and 21 points in $\bbP^2(\bbF_4)$. The specific of characteristic 2 is that each harmonic polar line passes through the corresponding base point. This is similar to what we have in the case of the Chilean pencil. The union of 21 lines and 21 points form a symmetric configuration $(21_5)$. Let $X$ be the blow-up of all 21 points. It admits an inseparable finite map of degree 2 isomorphic to a supersingular K3 surface with Artin invariant 1 \cite{DolgKondo}. It is a minimal resolution of the double plane
$$w^2+xyz(x^3+y^3+z^3) = 0.$$

The Chilean pencil is defined over any non-trivial extension $K = \bbF_4(a)$ of the finite field $\bbF_4$ and defines a configuration of $21$ points, $12$ conics and $9$ lines over $K$. There is no degenerate configuration in this case because the multiplicative group of $\bar{K}$ does not have elements of order $2$. We again blow-up the 21 points, and consider the double plane $$w^2+F(x,y,z) = 0,$$
where $F(x,y,z)$ is any smooth irreducible non-multiple member of the pencil.
The double cover has $21$ ordinary double points defined over $K$, and its minimal resolution is a supersingular K3 surface.  Our 1-dimensional family of such surfaces is one of the three irreducible components of the moduli space of supersingular K3 surfaces with Artin invariant $\sigma$ equal to 2 studied  by I. Shimada \cite{Shimada}. Each family is isomorphic to the affine line with one point deleted. The closures of these families contain one common point that corresponds to the supersingular K3 surface with the Artin invariant  $1$. The three families are distinguished by a certain $9$-dimensional linear code in $\bbF_2^{21}$. Our family has the weight polynomial
$$W(t) = 1+9t^5+102t^8+144t^9+144t^{12}+102t^{13}+9z^{16}+z^{21}.$$
The code is generated by 9 words of weight $5$ representing our harmonic polar lines $B_i$ and $66$ words of weight $8$ represented by our $54$ conics and $12$ conic components of reducible fibers. 
\end{remark}

\begin{remark} An inspection of the list $\Exc(S)$ of 144 $(-1)$-curves from Proposition \ref{minus1} suggests a certain involution on this set that interchanges the exceptional curves $E_i$ with quartics, lines with cubics and pairs conics. An explanation of this duality is the following. Given a $(-1)$-curve $E$ we consider  the corresponding double plane and its ramification curve $B$ on $S$. We have $E\cdot B = 1$, so that $(F+B-E)^2 = -1$ and $K_S\cdot (F+B-E) = -1$.  It is easy to see that, for any $(-2)$-curve $R$, we have either $R\cdot (F+B-E)\ge 0$ (if $R$ intersects $E$, then it also intersects $B$). Applying Lemma \ref{lem:-1}, we see that $F+B-E \sim E'$, where $E'$ is a $(-1)$-curve. This defines an involution on the set $\Exc(S)$. The inspection of Table \ref{144} confirms that it matches the curves as above.
\end{remark}

\begin{remark} Any five points determine a conic, but we want to exclude the $12$ conics which are part of reducible fibers.  Such forbidden sets of fives corresponds to removing one point of the six of such on each component conic, thus the number of allowed sets of fives are given by ${9 \choose 5}-6\cdot12=54$. Examples of such sets are those given by the union of two non-parallel lines in our affine space of nine points. The number of such sets can easily be determined to be $54$ ($12\cdot(12-3)/2$ or $9\cdot{4\choose2}$) thus all our allowable sets are of this type. This makes the duality $I\to I'$ explicit. Given $I$ with special point $p$ (the intersection of the lines) then choose $I'$ as the two residual line through $p$ (there are $4$ lines through each point).   
\end{remark}

\begin{remark}
It is known that two $(-1)$-curves on a weak del Pezzo surface of degree $d$ intersect with multiplicity $\le 3$ ($2$ if $d = 2$ and $1$ if $d\ge 3$ because in the latter case they become lines in the anti-canonical model). An Halphen surface is obtained by blowing up one point from a weak del Pezzo surface of degree 1. We observed that in our case two $(-1)$-curves intersect with multiplicity $\le 3$. This raises the question:\textit{What is the maximal intersection number of two $(-1)$-curves on an Halphen surface with extremal jacobian fibration?}  
\end{remark}

%%%%%%%%%%%%%%%%%%%%%%%%%%%%%%%%%%
\section{Higher indices}

Let $m$ be a positive integer which is 
not multiple of $3$ and let $S_m$ be
an elliptic Halphen surface of index $m$ whose
jacobian is the Hesse surface.
Such surface can be constructed as in
the case $m=2$ by blowing-up the plane
at the nine points $p_1,\dots,p_9$, where
$p_i := x_i + \eta$ and $\eta$ is an 
$m$-torsion point of the curve of equation
$x^3+y^3+z^3+txyz = 0$.
We say that the points $p_i,p_j,p_k$ are 
{\em Hesse-collinear} if $x_i,x_j,x_k$ 
are collinear. We consider two cases.
\begin{enumerate}
\item
If $m = 3k+1$, then there are $12$ irreducible
plane curves, each of degree $m$ with
three Hesse-collinear points of multiplicity $k+1$ and 
the remaining six points of multiplicity $k$.\\
\item
If $m = 3k+2$, then there are $12$ irreducible
plane curves, each of degree $m$ with
three Hesse-collinear points of multiplicity $k$ and 
the remaining six points of multiplicity $k+1$.
\end{enumerate}
In both cases one can verify the existence
of the curves by checking that the points with 
the given multiplicities sum $0$ in the group
law of the cubic. The irreducibility is a consequence 
of the fact that a curve of self-intersection $-2$ on 
a relatively minimal elliptic surface can only 
be sum of irreducible $(-2)$-curves, but we 
cannot have more than $12$ of them.

Here are some $m$-torsion loci with respect to
the point $x_7 = (1:-1:0)$.
\footnote{We have computed these loci with
Magma moving to a Weierstra\ss{} model of the
elliptic curve and calculating the $m$-torsion
subscheme there.}
When $m = 4$ the locus is cut out by 
\[
 xy^3 - y^4 - txy^2z - xz^3 - 2yz^3 = 0.
\]
When $m = 5$ the locus is cut out by 
\begin{gather*}
  2x^2y^6 - xy^7 + 2y^8 - x^2y^3z^3 - xy^4z^3 + 5y^5z^3
 - x^2z^6 + 2xyz^6 + 2y^2z^6\\
 + t
 (-x^2y^5z + 3xy^6z - y^7z + x^2y^2z^4 + 3xy^3z^4 + yz^7)\\
 + t^2
 (x^2y^4z^2 - xy^5z^2  + xy^2z^5)  = 0
\end{gather*}

To construct elliptic surfaces of index 3
whose jacobian is the Hesse surface we 
proceed as follows.
Let $\pi\colon 
\mathbb Z/9\mathbb Z\oplus\mathbb Z/3\mathbb Z
\to \mathbb Z/3\mathbb Z\oplus\mathbb Z/3\mathbb Z$
be the homomorphism induced by the projection
on the first coordinate. Let $\beta_{E_0}$ be a 
set-theoretic section of $\pi$ such that
the sum of all the elements in its image 
generates the kernel of $\pi$. An example
of such a $\beta_{E_0}$ is given by the columns
of the following matrix
\[
 \begin{pmatrix*}[r]
 0&0&0&1&4&1&5&5&5\\
 0&1&2&0&1&2&0&1&2
 \end{pmatrix*},
\]
where the first coordinate of each vector
is in $\mathbb Z/9\mathbb Z$ while the
second is in $\mathbb Z/3\mathbb Z$.
Choose nine points $p_1,\dots,p_9$
on a smooth plane cubic $C$ in such a way
that $p_1$ is a flex of $C$ and the class
of $p_i-p_1$ is given by the $i$-th column
of the above matrix. The blow-up of 
$\mathbb P^2$ at the nine points is an
elliptic surface of index $3$ whose jacobian
is the Hesse surface and whose $(-2)$-classes 
are the columns of the following matrix
\[
 \begin{pmatrix*}[r]
1&4&4&1&4&4&1&4&4&1&4&4&\\
-1&-1&-1&-1&-1&-1&0&-2&-1&0&-2&-1&\\
-1&-1&-1&0&-2&-1&-1&-1&-1&0&-1&-2&\\
-1&-1&-1&0&-1&-2&0&-1&-2&-1&-1&-1&\\
0&-2&-1&0&-1&-2&0&-2&-1&0&-1&-2&\\
0&-2&-1&-1&-1&-1&-1&-1&-1&-1&-1&-1&\\
0&-2&-1&0&-2&-1&0&-1&-2&0&-2&-1&\\
0&-1&-2&0&-2&-1&0&-2&-1&-1&-1&-1&\\
0&-1&-2&0&-1&-2&-1&-1&-1&0&-2&-1&\\
0&-1&-2&-1&-1&-1&0&-1&-2&0&-1&-2&\\
 \end{pmatrix*}.
\]
The $9$-torsion points with respect to
the point $x_7 = (1:-1:0)$ are cut out 
by the following cubics:
\[
 \begin{array}{ll}
    xy^2 + \epsilon x^2z + \epsilon^2yz^2, &
    xy^2 + \epsilon^2x^2z + \epsilon yz^2, \\
    xy^2 + x^2z + yz^2, &
    x^2y + \epsilon^2y^2z + \epsilon xz^2,\\
    x^2y + \epsilon y^2z + \epsilon^2xz^2, &
    x^2y + y^2z + xz^2,\\
    3x^3 + (\epsilon  + 2)txyz  - 3\epsilon^2z^3, &
    3x^3 + (-\epsilon  + 1)txyz - 3\epsilon z^3
 \end{array}
\]

%%%%%%%%%%%%%%%%%%%%%%%%%%%%%%%%%%
\section{Applications} 

In this section, we exhibit some nice properties of the Chilean configuration. 

A simple crossings configuration of curves on a surface $Z$ is a collection of nonsingular irreducible curves $\sfA =\{ C_1,\ldots,C_d \}$ such that $C_i, C_j$ are transversal for all $i \neq j$. For $n \geq 2$, an n-point of $\sfA$ is a point which belongs to exactly $n$ curves in $\sfA$. Let $\sigma : Y \to Z$ be the blow-up of all n-points with $n\geq 3$. Then $D:= \sigma^*(\sfA)_{\text{red}}$ is a simple normal crossings divisor, and we define the log Chern numbers $\bar{c}_1^2, \bar{c}_2$ of $(Z,\sfA)$ as the Chern numbers of $\Omega_{Y}^1\log(D)$. We have (see \cite[\S4]{Urz}) $$\bar{c}_1^2= c_1^2(Z) - \sum_{i=1}^d R_i^{(2)} + \sum_{n \geq 2} (3n-4)t_n + 4 \sum_{i=1}^d (g(C_i)-1) $$ $$\bar{c}_2= c_2(Z) + \sum_{n \geq 2} (n-1)t_n + 2 \sum_{i=1}^d (g(C_i)-1),$$ where $t_n$ is the number of n-points. The highest the log Chern slope $\bar{c}_1^2 / \bar{c}_2$ is, the more special the configuration is. As the Chilean configuration of 12 conics is simple crossings with $t_2=12$, $t_8=9$, and $t_n=0$ else, we have $\bar{c}_1^2=117$, $\bar{c}_2=54$, and so  
$\bar{c}_1^2 / \bar{c}_2 = 13/6 = 2.1\bar{6}$. The Hesse configuration of 12 lines has $\bar{c}_1^2 / \bar{c}_2 =2.5$.

We recall that log Chern slopes do have constraints. As a general example, let us consider a simple crossings arrangement of plane curves $\{ C_1,\ldots,C_d \}$ with $\bigcap_{i=1}^d C_i= \emptyset$. It is a divisible arrangement as defined in \cite[\S4]{Urz} (see \cite[Example 4.3]{Urz}), and so by \cite[Theorem 6.1]{Urz}, if $\bar{c}_2 \neq 0$, we can compare $\bar{c}_1^2 / \bar{c}_2$ with the Chern slope of a smooth complex projective surface. Typically these surfaces are not ruled, and so we can use the Bogomolov-Miyaoka-Yau inequality for complex algebraic surfaces to show that $\bar{c}_1^2 \leq 3 \bar{c}_2$. See \cite{EFU} for a systematic study of log Chern slopes for configurations of lines. In that case, we have the combinatorial (independent of the field of definition) inequalities $\bar{c}_2 \leq \bar{c}_1^2 \leq 3 \bar{c}_2$ (up to trivial configurations). For complex configurations of lines, the inequalities improve to the sharp $\bar{c}_2 \leq \bar{c}_1^2 \leq 8/3 \bar{c}_2$, where the upper bound is achieved if and only if the configuration is the dual Hesse configuration. It is easy to prove (combinatorially) that $[1,2[$ has no limit points for log Chern slopes over any fixed field. In \cite{EFU} it is proved that log Chern slopes are dense in $[2,2.5]$ over $\mathbb C$, and it is conjectured that there are no limit points in $]2.5,2.\bar{6}]$. In positive characteristic, it is proved that they are dense in $[2,3]$.      

There are at least four special configurations after we add the degenerated Chilean configuration, and the dual Hesse configuration into the picture:

\textbf{($\sfA_0$)} The degenerated Chilean configuration is a simple crossings configuration of 9 conics and 3 lines. We have $t_2=12$ and $t_7=9$, and so $\bar{c}_1^2=99$, $\bar{c}_2=45$, and $\bar{c}_1^2/\bar{c}_2=11/5=2.2$. Thus when we go from the Chilean configuration to the degenerated Chilean configuration, the log Chern slope jumps up. 

\textbf{($\sfA_1$)} Consider the configuration $\sfA_1$ consisting of the $12$ irreducible conics of the Chilean configuration, and the $9$ lines of the dual Hesse configuration. Then $\sfA_1$ has $t_2=6 \times 3 \times 4=72$, $t_5=12$, $t_8=9$, $t_n=0$ else. Therefore $\bar{c}_1^2=324$, $\bar{c}_2=144$, and $\bar{c}_1^2/\bar{c}_2=9/4=2.25$.

\textbf{($\sfA_2$)} Consider instead the configuration $\sfA_2$ formed by one of the four degenerated Chilean configurations of 9 conics and 3 lines together with the 9 lines in the dual Hesse configuration. Then $\sfA_2$ has $t_2=6 \times 3 \times 3=54$, $t_5=12$, $t_7=9$, $t_n=0$ else. Therefore $\bar{c}_1^2=270$, $\bar{c}_2=117$, and $\bar{c}_1^2/\bar{c}_2=30/13=2.\overline{307692}$.

\textbf{($\sfA_3$)} This is just putting together the Hesse configuration of 12 lines with the dual Hesse configuration of 9 lines as appear in the Chilean configurations. Then we have $t_2= 9 \times 4=36$, $t_4=9$, $t_5=12$, $t_n=0$ else. A nice observation is that the  $9 \times 4$ 2-points are the $9 \times 4$ base points of the degenerated Chilean arrangements. We have $\bar{c}_1^2=180$, $\bar{c}_2=72$, and $\bar{c}_1^2/\bar{c}_2=5/2=2.5$.
\label{AllinAll}

Another application is suggested by Anatoly Libgober. Let $\calC$ be the union of 12 conics from the Chilean configuration. 

\begin{proposition} The fundamental group $\pi_1(U)$ of the complement $U = \bbP^2\setminus \calC$ fits in the exact sequence
$$F_{19}\to \pi_1(U)\to F_3\to 1,$$
where $F_k$ denote the free group with $k$ generators.
\end{proposition}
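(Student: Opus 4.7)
The plan is to apply a Zariski--Van Kampen / Nori type fibration exact sequence. The Halphen pencil defines $\phi\colon\bbP^2\da\bbP^1$, and its resolution is the elliptic fibration $f\colon S\to\bbP^1$ of Section~\ref{chileconf}. Under the blow-up $\pi\colon S\to\bbP^2$, the twelve conics of $\calC$ lift to the twelve components of the four reducible $I_3$ fibers of $f$, while the nine exceptional divisors are two-sections of $f$. Hence $\pi$ identifies $U$ with $S$ minus these four fibers and nine two-sections, and composing with $f$ gives a surjective morphism $f_U\colon U\to B$, where $B:=\bbP^1\setminus\{t_1,\dots,t_4\}$ is the complement of the four singular-fiber values, so $\pi_1(B)=F_3$.

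I would next determine the generic fiber. Because the only singular fibers of $f$ are the four $I_3$'s (the Euler-number count $4\cdot 3=12=c_2(S)$ already accounts for everything), every fiber of $f$ over $B$ is a smooth elliptic curve. The map $f$ itself is non-submersive only at the half-fiber $F_0$ over a single point $t_0\in B$, and each two-section $\calE_i$ is a degree-two cover of $\bbP^1$ which, by Riemann--Hurwitz, is ramified at exactly two points, namely $t_0$ and a further point $t_\sharp\in B$ whose image in $\bbP^2$ is the nine-cuspidal dual sextic of Proposition~\ref{dualcubic}. Consequently, over any $t\in B^\circ := B\setminus\{t_0,t_\sharp\}$, each $\calE_i$ meets the fiber transversally in two points, so $f_U^{-1}(t)$ is a torus with $18$ punctures; this is homotopy equivalent to a wedge of $19$ circles and hence has $\pi_1=F_{19}$.

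I would then run the homotopy long exact sequence for $f_U\colon U^\circ\to B^\circ$, where $U^\circ:=f_U^{-1}(B^\circ)$. Over $B^\circ=\bbP^1\setminus\{6\text{ points}\}$ (with $\pi_1(B^\circ)=F_5$) the map $f$ is a proper holomorphic submersion and each $\calE_i$ is an honest étale double cover there; so $U^\circ\to B^\circ$ is a locally trivial $C^\infty$ fiber bundle. Together with $\pi_2(B^\circ)=0$, the long exact sequence of the fibration reads
\[
 F_{19}\longrightarrow \pi_1(U^\circ)\longrightarrow F_5\longrightarrow 1.
\]
To pass from $U^\circ$ to $U$ one restores the two smooth irreducible divisors $F_0\cap U$ and $F_{t_\sharp}\cap U$, each a punctured elliptic curve in the smooth quasi-projective variety $U$. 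The standard meridian-killing principle for smooth codimension-one subvarieties gives $\pi_1(U)=\pi_1(U^\circ)/\langle \gamma_0,\gamma_\sharp\rangle^N$, and since the images of the two meridians $\gamma_0,\gamma_\sharp$ in $F_5=\pi_1(B^\circ)$ are precisely the two free generators encircling $t_0$ and $t_\sharp$, quotienting $F_5$ by these generators produces $\pi_1(B)=F_3$. Passing the displayed exact sequence to these quotients yields $F_{19}\to\pi_1(U)\to F_3\to 1$.

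The principal obstacle is the multiple fiber: over $t_0$ the map $f$ is not a submersion, and the two-sections ramify both there and at $t_\sharp$, so the topological type of the fibers of $f_U$ actually jumps at those two values (from a torus minus $18$ points to a torus minus $9$ points). The detour through $B^\circ$ is precisely what allows one to invoke a genuine homotopy long exact sequence of a locally trivial fibration, after which the meridian argument restores the two missing fibers and the desired sequence follows.
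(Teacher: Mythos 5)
Your strategy is essentially the paper's: restrict the Halphen pencil to $U$ to obtain a morphism onto $\bbP^1$ minus the four points under the reducible fibers, identify the general fiber with an elliptic curve minus $18$ points (you do this on $S$ via the nine $2$-sections; the paper does it in the plane via the $9$-nodal sextics and their normalizations --- the two computations agree), and then invoke a fibration exact sequence. Your preparatory steps are correct and in fact more careful than the paper's one-line appeal to ``the exact sequence for smooth fibrations'': the identification of $U$ with $S$ minus the four $I_3$ fibers and the nine exceptional $2$-sections, the Euler-number argument excluding further singular fibers, the Riemann--Hurwitz location of the two branch values $t_0,t_\sharp$ of the $2$-sections, the local triviality of $U^\circ\to B^\circ$, and the sequence $F_{19}\to\pi_1(U^\circ)\to F_5\to 1$ are all fine.

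The gap is in the final meridian-killing step, at the multiple fiber. The scheme-theoretic fiber of $f$ over $t_0$ is $2F_0$, so near a point of $F_0$ the map $f-t_0$ is a unit times the square of a local equation of $F_0$; hence a meridian $\gamma_0$ of the divisor $F_0\cap U$ maps to the \emph{square} $g_0^2$ of the free generator of $\pi_1(B^\circ)=F_5$ encircling $t_0$, not to $g_0$ itself. (Your claim for $\gamma_\sharp$ is correct, since $F_{t_\sharp}$ is reduced.) Consequently your computation gives $\pi_1(U)$ modulo the normal closure of the image of $F_{19}$ isomorphic to $F_5/\langle\langle g_0^2,\,g_\sharp\rangle\rangle\cong(\bbZ/2\bbZ)*F_3$, the orbifold fundamental group of $B$ with an order-two point at $t_0$, rather than $F_3$. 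This is precisely the standard multiple-fiber correction to the fibration exact sequence, and your argument as written does not show that the resulting order-two class --- represented by a loop in $U$ lying over a \emph{simple} loop around $t_0$, obtained by closing up a half-turn around $F_0$ inside a nearby fiber --- lies in the normal closure of the image of $F_{19}$ in $\pi_1(U)$. Until that class is disposed of (by an argument specific to this configuration, or by replacing the quotient with the orbifold group), the sequence you produce is not the one asserted. Note that the paper's own terse proof passes over exactly this point; your more explicit route has the merit of exposing where the genuine difficulty sits, but it does not yet resolve it.
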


\begin{proof} The restriction of the Halphen pencil to $U$ defines a morphism $U\to \bbP^1\setminus \{4 \, \textrm{points}\}$. Its fibers are isomorphic to plane sextic curves with 9 nodes deleted. Passing to the normalization we obtain that the fiber is isomorphic to an elliptic curve with 18 points deleted. Its fundamental group is isomorphic to the free group $F_{19}$. The exact sequence from the assertion is the exact sequence for fundamental groups of smooth fibrations. 
\end{proof}

We refer to \cite{Libgober} for the discussion of the problem of classification of configuration of curves on a simply connected algebraic surface that admits a surjection to a free non-commutative group.

Another application of our configuration can be found in \cite{Pokora}. Recall that a plane curve $C$ is called \emph{free} if the logarithmic tangent sheaf $\Theta^1(\log C)$ is locally free, or, equivalently, the Jacobian sheaf of $C$ has projective dimension 2.

It is well-known that the union of 12 lines from the Hesse configuration, being a special case of an arrangement of reflection hyperplanes of a complex reflection group, is a free divisor.

The following proposition is proved in \cite[Proposition 1 and 2]{Pokora};

\begin{proposition} The union of 12 conics from the Chilean configuration as well as the union of 12 conics and 9 lines from the embedded dual Hesse configuration are free divisors in the projective plane.
\end{proposition}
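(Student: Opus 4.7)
The plan is to invoke the du Plessis--Wall / Dimca--Sticlaru numerical criterion: a reduced plane curve $C\subset\bbP^2$ of degree $d$ defined by $F=0$ is free if and only if
\[
r(d-1-r)=(d-1)^2-\tau(C),
\]
where $r=mdr(F)$ is the minimal degree of a non-trivial Jacobian syzygy $aF_x+bF_y+cF_z=0$ and $\tau(C)$ is the total Tjurina number. Equivalently, by Saito's classical determinantal criterion, freeness is equivalent to the existence of two Jacobian syzygies whose $3\times 3$ matrix of coefficients, together with the Euler row, has determinant a nonzero constant multiple of $F$. In the free case the exponents are $(r,d-1-r)$.

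First I would compute $\tau(C)$ from the combinatorial data collected in Section~8. Every singular point of both arrangements is an ordinary multiple point, whose local analytic model $\prod_{i=1}^n(y-\alpha_ix)=0$ with distinct $\alpha_i$ is weighted-homogeneous, so $\tau=\mu=(n-1)^2$ at every $n$-fold point. For the Chilean arrangement $\calC$ of $12$ conics ($d=24$, $t_2=12$, $t_8=9$) this yields $\tau(\calC)=12+9\cdot 49=453$ and $(d-1)^2-\tau(\calC)=76=4\cdot 19$, so freeness would force the exponents $(d_1,d_2)=(4,19)$ and the existence of a Jacobian syzygy of degree $4$. The analogous count for the larger arrangement of $12$ conics together with the $9$ dual Hesse lines determines its expected exponents from the incidences listed in Section~8.

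Next I would produce Jacobian syzygies from the Halphen-pencil structure. Writing $F=P_1P_2P_3P_4$ with $P_i=\lambda_iF_6+\mu_iG_3^2$ as in Section~\ref{chileconf}, the ``bihamiltonian'' derivation
\[
\theta=\bigl((F_6)_y(G_3)_z-(F_6)_z(G_3)_y\bigr)\partial_x+\bigl((F_6)_z(G_3)_x-(F_6)_x(G_3)_z\bigr)\partial_y+\bigl((F_6)_x(G_3)_y-(F_6)_y(G_3)_x\bigr)\partial_z
\]
annihilates both $F_6$ and $G_3$ (each term is a $3\times 3$ determinant with two equal rows), hence each $P_i$, and hence $F$; it therefore gives a Jacobian syzygy whose coefficients have degree $7$. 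This is higher than the target degree $4$, so a finer search is required: the group $G_{18}$ of Corollary~\ref{aut} acts on each graded piece of the syzygy module, and decomposing that piece into isotypic components constrains the candidate relations. For the enlarged configuration the nine line Euler identities $x\ell_{i,x}+y\ell_{i,y}+z\ell_{i,z}=\ell_i$ supply further logarithmic derivations; alternatively, Abe's addition--deletion theorem for free plane curves lets one transfer freeness from $\calC$ to $\calC\cup\ell_i$ one line at a time, each step requiring a compatibility condition on the restriction of the Jacobian ideal of $\calC$ to $\ell_i$.

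The main obstacle is closing the gap between the pencil-derived degree-$7$ syzygy and the required minimum degree $4$ for the Chilean configuration. This likely demands either a direct symbolic computation, as in Pokora's original argument, or a cohomological approach on the rational elliptic surface $\pi\colon S\to\bbP^2$: the proper transforms of the $12$ conics are the $(-2)$-curves organized into the four reducible fibers of the elliptic fibration, which should yield a canonical rank-two splitting of the logarithmic tangent sheaf $T_S(-\log\pi^{-1}(\calC))$ reflecting the fibration; pushing this splitting down to $\bbP^2$ should realize the expected decomposition of $T_{\bbP^2}(-\log\calC)$ and thereby establish freeness with the predicted exponents.
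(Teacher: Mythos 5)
You should know at the outset that the paper does not prove this proposition at all: it is quoted verbatim from Pokora--Szemberg \cite[Propositions 1 and 2]{Pokora}, so there is no in-paper argument to compare your approach against. Judged on its own terms, your proposal sets up the right machinery and gets the numerology right for the first arrangement: the singularities of the $12$-conic arrangement are $12$ nodes and $9$ ordinary $8$-fold points, all quasi-homogeneous, so $\tau(\calC)=12+9\cdot 49=453$, and $(d-1)^2-\tau=529-453=76=4\cdot 19$ with $4+19=23=d-1$, forcing exponents $(4,19)$ if the curve is free. Your degree-$7$ syzygy coming from the Hamiltonian derivation of the pencil is also correct: it kills $F_6$ and $G_3$, hence every member $\lambda F_6+\mu G_3^2$, hence the product of the four reducible members.

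The genuine gap is that nothing in the proposal actually establishes freeness. The Dimca--Sticlaru criterion requires $r=mdr(F)$, the \emph{minimal} degree of a Jacobian syzygy, to satisfy $r(d-1-r)=76$; your degree-$7$ relation gives $7\cdot 16=112\neq 76$, so it only bounds $mdr(F)\le 7$ and is of no use for the criterion. The decisive step --- exhibiting a degree-$4$ syzygy and showing none of lower degree exists, or equivalently producing a Saito pair --- is exactly what you leave open, and neither the $G_{18}$-isotypic decomposition nor the proposed splitting of $T_S(-\log)$ along the elliptic fibration is carried out; the latter is a genuinely nontrivial claim (the $12$ conics are not a union of whole fibers of $\phi$, since the pencil has a fourth, multiple, member and the map $\bbP^2\dashrightarrow\bbP^1$ is only rational), so it cannot be waved through. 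For the conic-line arrangement the situation is worse: you do not compute the expected exponents, and the ``analogous count from Section 8'' is not routine, because each harmonic polar $\ell_i$ passes through its base point $x_i$ (cf.\ the sentence following \eqref{branchclass} and Remark \ref{rem:inv}), so those nine points acquire nine branches and the naive Tjurina count based on the $t_8=9$ of Section 8 is not obviously the right one; the addition--deletion steps you invoke each carry a hypothesis ($\tau$ must jump by exactly the prescribed amount when a line is added) that you would have to verify nine times. As it stands the proposal is a correct reduction of the problem to the key syzygy computation, which is precisely the content of the cited reference, not a proof.
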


%\ID{I do not understand the claim from \cite{Pokora} that some singular points of the extended Chilean configuration are not quasi-homogeneous singularities. Are not they all  ordinary multiple points?} \GU{I thought the same. I asked Piotr, and he pointed out the singularity at (0,0) in Example 1.7 in https://arxiv.org/abs/0709.1890}

Another interesting attribute of a reducible curve $C$ on a rational surface is the \emph{Harbourne constant} $H(C)$ defined by 
$$H(C) = \frac{C^2-\sum_{x\in X}\mult_x(C)^2}{s},$$
where $\mult_x(C)$ is the multiplicity of a singular point $x\in C$ and $s$ is the number of such points. It is conjectured that $H(C)\geq -4.5$ for curves over $\mathbb C$.

In our case we may consider the union $C$ of nine 2-sections $E_i$ and four reducible fibers of the elliptic fibration on $S$. We have $C^2 = -9+9.16= 15.9 = 135$ and we have $84 =8.9+12$ points of multiplicity 2, so $H(C) = \sum_{x\in X}\mult_x(C)^2= -3+\frac{17}{28}$. On the other hand, if we take the degenerate Chilean configuration, we obtain $C^2 = -9+14.9 = 117$ and $\sum_{x\in X}\mult_x(C)^2 = 7.9+12 = 75$  that gives $H(C) = -3+\frac{14}{25}$, that is slightly less. 

We also point out that this configuration of $12$ conics has also recently appeared in \cite{KRS} in the context of generalized Kummer surfaces. We refer to that pre-print for details.

%%%%%%%%%%%%%%%%%%%%%%%%%%%%%%%%%%%%%%%%%

\begin{bibdiv}
\begin{biblist}

\bib{Artebani}{article}{
   author={Artebani, Michela},
   author={Dolgachev, Igor},
   title={The Hesse pencil of plane cubic curves},
   journal={Enseign. Math. (2)},
   volume={55},
   date={2009},
   number={3-4},
   pages={235--273},
   issn={0013-8584},
   review={\MR{2583779}},
   doi={10.4171/LEM/55-3-3},
}

\bib{BPV}{book}{
   author={Barth, Wolf P.},
   author={Hulek, Klaus},
   author={Peters, Chris A. M.},
   author={Van de Ven, Antonius},
   title={Compact complex surfaces},
   series={Ergebnisse der Mathematik und ihrer Grenzgebiete. 3. Folge. A
   Series of Modern Surveys in Mathematics [Results in Mathematics and
   Related Areas. 3rd Series. A Series of Modern Surveys in Mathematics]},
   volume={4},
   edition={2},
   publisher={Springer-Verlag, Berlin},
   date={2004},
   pages={xii+436},
   isbn={3-540-00832-2},
   review={\MR{2030225}},
   doi={10.1007/978-3-642-57739-0},
}

\bib{Beauville}{book}{
   author={Beauville, Arnaud},
   title={Surfaces alg\'{e}briques complexes},
   language={French},
   note={Avec une sommaire en anglais;
   Ast\'{e}risque, No. 54},
   publisher={Soci\'{e}t\'{e} Math\'{e}matique de France, Paris},
   date={1978},
   pages={iii+172},
   review={\MR{0485887}},
}

\bib{Bertini}{article}{
   author={Bertini, Eugenio},
   title={Ricerche sulle transformazioni univoche involutori del piano},
   language={Italian},
   journal={Annali di Matematica Pure et Applicata, Ser. II},
   date={1877},
   volume={8},
   pages={244--282},
}

\bib{Libgober}{article}{
   author={Cogolludo, Jose I.},
   author={Libgober, Anatoly},
   title={Free quotients of fundamental groups of smooth quasi-projective varieties},
  journal =  {arXiv:1904.00969},
  eprint =   {https://arxiv.org/pdf/1904.00969.pdf},
}

\bib{CDL}{book}{
   author={Cossec, Fran\c{c}ois R.},
   author={Dolgachev, Igor V.},
   title={Enriques surfaces. I},
   series={Progress in Mathematics},
   volume={76},
   publisher={Birkh\"{a}user Boston, Inc., Boston, MA},
   date={1989},
   pages={x+397},
   isbn={0-8176-3417-7},
   review={\MR{986969}},
   doi={10.1007/978-1-4612-3696-2},
}

\bib{Dolgachev0}{article}{
   author={Dolgachev, Igor V.},
   title={Rational surfaces with a pencil of elliptic curves},
   language={Russian},
   journal={Izv. Akad. Nauk SSSR Ser. Mat.},
   volume={30},
   date={1966},
   pages={1073--1100},
   issn={0373-2436},
   review={\MR{0218356}},
}

\bib{DolgachevInvariants}{book}{
   author={Dolgachev, Igor V.},
   title={Lectures on invariant theory},
   series={London Mathematical Society Lecture Note Series},
   volume={296},
   publisher={Cambridge University Press, Cambridge},
   date={2003},
   pages={xvi+220},
   isbn={0-521-52548-9},
   review={\MR{2004511}},
   doi={10.1017/CBO9780511615436},
}

\bib{DolgachevMartin}{article}{
   author={Dolgachev, Igor V.},
   author={Martin, Gebhard},
   title={Automorphism groups of rational elliptic and quasi-elliptic surfaces in all characteristics},
}

\bib{DolgKondo}{article}{
   author={Dolgachev, Igor V.},
   author={Kondo, Shigeyuki},
   title={A supersingular $K3$ surface in characteristic 2 and the Leech
   lattice},
   journal={Int. Math. Res. Not.},
   date={2003},
   number={1},
   pages={1--23},
   issn={1073-7928},
   review={\MR{1935564}},
   doi={10.1155/S1073792803202038},
}

\bib{CAG}{book}{
   author={Dolgachev, Igor V.},
   title={Classical algebraic geometry},
   note={A modern view},
   publisher={Cambridge University Press, Cambridge},
   date={2012},
   pages={xii+639},
   isbn={978-1-107-01765-8},
   review={\MR{2964027}},
   doi={10.1017/CBO9781139084437},
}

\bib{EFU}{article}{
   author={Eterovi\v{c}, Sebastian},
   author={Figueroa, Fernando},
   author={Urz\'ua, Giancarlo},
   title={On the geography of line arrangements},
  journal =  {arXiv:1805.00990},
  eprint =   {https://arxiv.org/pdf/1805.00990.pdf},
}

%\bib{Fulton}{book}{
%   author={Fulton, William},
%   title={Intersection theory},
%   series={Ergebnisse der Mathematik und ihrer Grenzgebiete (3) [Results in
%   Mathematics and Related Areas (3)]},
%   volume={2},
%   publisher={Springer-Verlag, Berlin},
%   date={1984},
%   pages={xi+470},
%   isbn={3-540-12176-5},
%   review={\MR{732620}},
%   doi={10.1007/978-3-662-02421-8},
%}

\bib{HP}{article}{
   author={Hacking, Paul},
   author={Prokhorov, Yuri},
   title={Smoothable del Pezzo surfaces with quotient singularities},
   journal={Compos. Math.},
   volume={146},
   date={2010},
   number={1},
   pages={169--192},
   issn={0010-437X},
   review={\MR{2581246}},
   doi={10.1112/S0010437X09004370},
}

\bib{Halphen}{article}{
   author={Halph\'{e}n, Georges H.},
   title={Sur les courbes planes du sixi\`eme degr\'{e} \`a neuf points doubles},
   language={French},
   journal={Bull. Soc. Math. France},
   volume={10},
   date={1882},
   pages={162--172},
   issn={0037-9484},
   review={\MR{1503898}},
}

\bib{Lang}{article}{
   author={Harbourne, Brian},
   author={Lang, William E.},
   title={Multiple fibers on rational elliptic surfaces},
   journal={Trans. Amer. Math. Soc.},
   volume={307},
   date={1988},
   number={1},
   pages={205--223},
   issn={0002-9947},
   review={\MR{936813}},
   doi={10.2307/2000759},
}

\bib{HM}{article}{
   author={Harbourne, Brian},
   author={Miranda, Rick},
   title={Exceptional curves on rational numerically elliptic surfaces},
   journal={J. Algebra},
   volume={128},
   date={1990},
   number={2},
   pages={405--433},
   issn={0021-8693},
   review={\MR{1036399}},
   doi={10.1016/0021-8693(90)90031-I},
}

\bib{KRS}{article}{
   author={Kohel, David},
   author={Roulleau, Xavier},
   author={Sarti, Alessandra},
   title={A special configuration of 12 conics and generalized Kummer surfaces},
  journal =  {arXiv:2004.11421},
  eprint =   {https://arxiv.org/pdf/2004.11421.pdf},
}

\bib{LT}{article}{
   author={Laface, Antonio},
   author={Testa, Damiano},
   title={On minimal rational elliptic surfaces}, 
  journal =  {arXiv:1502.00275},
  eprint =   {https://arxiv.org/pdf/1502.00275.pdf},
}

\bib{MirandaPersson}{article}{
   author={Miranda, Rick},
   author={Persson, Ulf},
   title={On extremal rational elliptic surfaces},
   journal={Math. Z.},
   volume={193},
   date={1986},
   number={4},
   pages={537--558},
   issn={0025-5874},
   review={\MR{867347}},
   doi={10.1007/BF01160474},
}

\bib{PSU}{article}{
   author={Park, Heesang},
   author={Shin, Dongsoo},
   author={Urz\'{u}a, Giancarlo},
   title={A simply connected numerical Campedelli surface with an
   involution},
   journal={Math. Ann.},
   volume={357},
   date={2013},
   number={1},
   pages={31--49},
   issn={0025-5831},
   review={\MR{3084342}},
   doi={10.1007/s00208-013-0905-6},
}

\bib{Pokora}{article}{
   author={Pokora, Piotr},
   author={Szemberg, Tomasz},
   title={Line-conic arrangements in the complex projective plane}, 
  journal =  {arXiv:2002.01760},
  eprint =   {https://arxiv.org/pdf/2002.01760.pdf},
}  

\bib{Shimada}{article}{
   author={Shimada, Ichiro},
   title={Moduli curves of supersingular $K3$ surfaces in characteristic 2
   with Artin invariant 2},
   journal={Proc. Edinb. Math. Soc. (2)},
   volume={49},
   date={2006},
   number={2},
   pages={435--503},
   issn={0013-0915},
   review={\MR{2243797}},
   doi={10.1017/S0013091504000562},
}

\bib{Urz}{article}{
   author={Urz\'{u}a, Giancarlo},
   title={Arrangements of curves and algebraic surfaces},
   journal={J. Algebraic Geom.},
   volume={19},
   date={2010},
   number={2},
   pages={335--365},
   issn={1056-3911},
   review={\MR{2580678}},
   doi={10.1090/S1056-3911-09-00520-7},
}
\end{biblist}
\end{bibdiv}
\end{document}